\DeclareMathOperator{\re}{\mathbb{R}e}
\DeclareMathOperator{\im}{\mathbb{I}m}
\newcommand{\INF}{{\infty}}
\newcommand{\eps}{\epsilon}
\newcommand{\dist}{\mbox{dist}}
\newcommand{\tta}{\theta}
\newcommand{\nn}{\nonumber}
\newcommand{\OM}{\Omega}
\newcommand{\sph}{{{\mathbf S}^ 1}}
\newcommand{\del}{\partial}
\newcommand{\Gam}{\varGamma}
\newcommand{\ol}{\overline}
\newcommand{\ds}{\displaystyle}
\newcommand{\dba}{\overline{\partial}}
\newcommand{\fii}{{\varphi}}
\newcommand{\bu}{{\bf u}}
\newcommand{\bv}{{\bf v}}
\newcommand{\bg}{{\bf g}}
\newcommand{\bn}{{\bf n}}
\newcommand{\HT}{\mathcal{H}}
\newcommand{\HAT}{\mathcal{H}_{a}}
\newcommand{\Pminus}{\mathcal{P}^{-}}
\newcommand{\Pplus}{\mathcal{P}^{+}}
\newcommand{\Pstar}{\mathcal{P}^{*}}
\newtheorem{theorem}{Theorem}[section]
\newtheorem{prop}{Proposition}[section]
\newtheorem{lemma}{Lemma}[section]
\newtheorem{cor}{Corollary}[section]
\newtheorem{definition}{Definition}[section]
\title[Range characterization]{On the range of the attenuated Radon transform in strictly convex sets}
\begin{document}
\date{}
\author{Kamran Sadiq}
\address{Department of Mathematics, University of Central Florida, Orlando, Florida 32816}
\email{ksadiq@knights.ucf.edu, tamasan@math.ucf.edu}

\author{Alexandru Tamasan}
\subjclass[2000]{Primary 30E20; Secondary 35J56}

\date{}

\keywords{Attenuated Radon Transform, A-analytic maps, Hilbert Transform}

\begin{abstract}
We present new necessary and sufficient conditions for a function on $\partial\Omega\times S^1$
to be in the range of the attenuated Radon transform of a sufficiently smooth function support in the convex set $\overline\Omega\subset\mathbb{R}^2$. The approach is based on an explicit Hilbert transform associated with traces of the boundary of A-analytic functions in the sense of Bukhgeim.
\end{abstract}

\maketitle

\section{Introduction} \label{S:intro}
In this paper we are concerned with the range characterization of the
attenuated Radon transform of function of compact support in the plane. Necessary and sufficient constraints on range of the non-attenuated (classical) Radon transform in the Euclidean space have been known since the works in \cite{gelfandGraev}, \cite{helgason}, and \cite{ludwig}.
These constraints, known as the Cavalieri or the moment conditions, are in terms of the angular variable. For function in the Schwartz class, they are essentially unique due to a Paley-Wiener type theorem. Moreover, the Helgason support theorem  extends the conditions to smooth functions of compact support \cite{helgasonBook}. However, in the case of functions of compact support, it is possible to obtain essentially different range conditions since more than one operator can annihilate functions of compact support in the range of the Radon transform. The results here constitute one such example.

Inversion methods of the attenuated Radon transform in the plane appeared first in \cite{ABK}, and \cite{novikov01}, and various developments can be found in \cite{naterrer01}, \cite{bomanStromberg}, \cite{finch}, \cite{bal}. The interest in range conditions stems out from their applications to data enhancement in medical imaging methods such as Single Photon, or Positron Emission Computed Tomography \cite{naterrerBook}.
For the Euclidean attenuated Radon transform, some range characterization based on the inversion procedure in \cite{novikov01} can be found in \cite{novikov02}. These constraints are also in terms of the angular variable.

Different from the existing results above, our new characterization is in terms of a Hilbert transform associated with the A-analytic maps \`{a} la Bukhgeim \cite{bukhgeim_book}, and represents constraints in the spatial variable; see Theorems \ref{Th3}, and \ref{AttRTTh}. Range characterizations in terms of a Hilbert type transform were first introduced in \cite{pestovUhlmann} in the non-attenuated case for smooth functions on two dimensional compact simple Riemmanian manifolds. Extensions to the attenuated case and to tensor tomography has been recently obtained in \cite{paternainUhlmann}.

Throughout this work $\Omega\subset\mathbb{R}^2$ is a convex bounded domain with $C^2$-smooth boundary $\Gamma$ with strictly positive curvature bound.
Let $a, f \in C(\ol \OM)$ be extended by zero outside. The {\em divergence beam transform} of $a$ is defined as
\begin{align}\label{divbeam}
Da(x,\tta): =\int_{0}^{\INF} a(x+t\tta)dt,
\end{align}and the {\em attenuated Radon transform} of $f$  (with attenuation $a$) by
\begin{align}\label{AttRT}
\int_{-\infty}^\infty f(x+t\theta)e^{-Da(x+t\theta,\theta)}dt.
\end{align}
The integral in \eqref{AttRT} is constant in $x$ in the direction of $\tta$, and
this defines a function on the cotangent bundle of the circle $\sph$. In here
however we describe the range characterization in terms of function on $\Gam \times \sph$ and it make sense to think of integral in \eqref{AttRT} defined on $\ol\OM \times \sph$.

For any $(x,\theta)\in\ol\Omega\times S^1$, let $\tau_\pm(x,\theta)$ denote the distance from $x$ in the $\pm\theta$ direction to the boundary, and distinguish the endpoints $x^\pm_\theta\in\Gamma$ of the chord in the direction of $\theta$ passing through $x$ by
\begin{equation}\label{xthetapm}
x^\pm_\theta:=x\pm\tau_\pm(x,\theta)\theta.
\end{equation}Note that
\begin{align}\label{chord}
\tau(x,\theta)=\tau_+(x,\theta)+\tau_-(x,\theta)
\end{align}is the length of the chord.

\begin{figure}[ht]
\centering
\begin{tikzpicture}[scale=1.5,cap=round,>=latex]

 \draw[thick] (0cm,0cm) circle(1cm);
 \draw[gray] (0cm,0cm) -- (45:1cm);
 \filldraw[black] (45:1cm) circle(1.2pt);
  \coordinate[label=right:$x_{\theta}^{+}$] (x_{+}) at (45:1.01cm);

 \draw[->] (45:1.2cm) -- (45:1.6cm);
 \coordinate[label=left:$\theta$] (theta1) at (45:1.4cm);

 \filldraw[black] (45:0.2cm) circle(1.2pt);
  \coordinate[label=right:$x$] (x) at(45:0.2cm);
  \coordinate[label=above:$\Omega$] (OM) at(120:0.6cm);

  \draw[gray] (0cm,0cm) -- (225:1cm);
  \filldraw[black] (225:1cm) circle(1.2pt);

  \coordinate[label=below:$x_{\theta}^{-}$] (x_{-}) at (225:1cm);

  \draw[->] (225:1.6cm) -- (225:1.2cm);
  \coordinate[label=left:$\theta$] (theta2) at (225:1.2cm);

  \tikzset{
    position label/.style={
       above = 3pt,
       text height = 1.5ex,
       text depth = 1ex
    },
   brace/.style={
     decoration={brace,mirror},
     decorate
   }
}


\draw [brace,decoration={raise=0.5ex}] [brace]  (x_{+}.north) -- (x.north) node [position label, pos=0.6, rotate = 45,scale=0.8] {$\tau_{+}(x,\tta)$};


 \tikzset{
    position label/.style={
       below = 3pt,
       text height = 1.5ex,
       text depth = 1ex
    },
   brace/.style={
     decoration={brace,mirror},
     decorate
   }
}

 \draw [brace,decoration={raise=0.5ex}] (x_{-}.north) -- (x.north) node [position label, pos=0.55, rotate = 45, scale=0.8] {$\tau_{-}(x,\tta)$};

\end{tikzpicture}
\caption{} \label{fig:1}
\end{figure}

\begin{definition}\label{AttRadon_definition}
We say that $g$ on $\Gamma\times\sph$ {\em is an attenuated Radon transform of} $f$ with attenuation $a$, if
\begin{align}\label{Radon_definition}
g(x^+_\theta,\theta)-\left[e^{-Da} g\right](x^-_\theta,\theta)= \int_{\tau_-(x,\theta)}^{\tau_+(x,\theta)} f(x+t\theta)e^{-Da(x+t\theta,\theta)}dt,
\end{align}for every $(x,\theta)\in\ol\OM\times \sph$.
\end{definition}

It is easy to see that $g$ in definition \ref{AttRadon_definition} is not unique, since we can add to $g$ any function $h$ on $\Gamma\times S^1$ such that
\begin{equation}\label{nonuniqueness}
h(x^+_\theta,\theta)=\left[e^{-Da}h\right](x^-_\theta,\theta).
\end{equation}

{\em If $g$ is an attenuated Radon transform in the sense above, we use the notation $g\in R_af$. In the case $a\equiv 0$, we use the notation $g\in Rf$.}

We note that \eqref{nonuniqueness} is the only way non-uniqueness occurs, and that, for functions defined in the whole plane with Radon data at infinity, such an ambiguity cannot occur.

In motivation to our definition \eqref{Radon_definition} we note that the function $g$ is precisely the trace on $\Gamma\times S^1$ of
solutions $u$  to the transport equation
\begin{equation} \label{eq:TEq2}
    \tta\cdot\nabla u(x,\tta) +a(x) u(x,\theta) = f(x), \quad  (x,\tta)\in \OM \times S^{1},
\end{equation}
in accordance to the physical model of transport, where $u(x,\theta)$ is the density of particles at $x$ moving in the direction $\theta$, $f(x)$ is the density of radiating particles per unit path-length, and $a(x)$ is the medium capability of absorption per unit path-length at $x$.

Our main result gives necessary and sufficient conditions for $g\in R_a f$. These conditions characterize the traces $u|_{\Gamma\times S^1}$ of solutions of \eqref{eq:TEq2}, as traces on $\Gamma$ of solutions of A-analytic functions. For the sake of clarity we first treat the non-attenuated case $a\equiv 0$, and then reduce the attenuated case to it.

\section{Preliminaries}
In this section we recall some preliminary notions and results from the theory of A-analytic sequence valued maps, singular integral and harmonic analysis, and set notations. We justify the results which are new.

We treat first the non-attenuated case ($a\equiv 0$), in which the transport equation further simplifies to
\begin{equation} \label{eq:TEql}
    \tta\cdot\nabla v(x,\tta) = f(x), \quad  (x,\tta)\in \OM \times S^{1}.
\end{equation}

With the complex notations
\[z = x_{1} + ix_{2}, \quad \ol{\del} = \left( \del_{x_{1}}+i \del_{x_{2}} \right) /2 ,\quad \del = \left( \del_{x_{1}}- i \del_{x_{2}} \right) /2,\]
the advection operator becomes $$\tta\cdot\nabla = e^{-i\fii} \ol{\del} + e^{i\fii} \del,$$
where $\fii=\arg(\tta)$ denotes an angular variable .

Let
$v(z,\theta) = \sum_{-\infty}^{\infty} v_{n}(z) e^{in\fii},$ be the (formal) Fourier expansion of $v$ in the angular variable. Provided appropriate convergence of the series as specified in the theorems, we see that $v$ solves (\ref{eq:TEql}) if and only if its Fourier coefficients solve
\begin{equation} \label{eq:TEq3}
\ol{\del} v_{-1}(z) + \del v_{1}(z) = f(z),
\end{equation}
and, for $n\neq 1$,
$$ \ol{\del} v_{n}(z) + \del v_{n-2}(z) = 0.$$

Since $v$ is real-valued, its Fourier coefficients appear in complex-conjugate pairs, $\ol{v_{n}} = {v_{-n}},$  so
that it suffices to work with the sequence of non-positive indexes (this choice preserves the original notation in \cite{bukhgeim_book}).

\begin{definition}The sequence valued map $z\mapsto  \bv(z): = \langle v_{0}(z),v_{-1}(z),v_{-2}(z),... \rangle$
is called A-analytic if $\bv\in C(\ol\OM;l_\INF)\cap C^1(\OM;l_\INF)$ and
\begin{equation}\label{Aanalytic}
\ol{\del} v_{n}(z) + \del v_{n-2}(z) = 0, \quad n=0,-1,-2,...
\end{equation}
\end{definition}

For a compact set $K\subset\mathbb{R}^2$, such as $\Gamma, \ol\OM$, $\sph$, or $\ol\OM\times\sph$, by $C^\alpha(K)$ we denote the Banach space of uniform $\alpha$- H\"{o}lder continuous functions endowed with the norm
$$\|f\|_{C^\alpha(K)}:=\sup_{z\in K}|f(z)|+\sup_{z,w\in K,\,z\neq w}\frac{|f(z)-f(w)|}{|z-w|^\alpha}.$$
By $C^\alpha(\OM)$ we denote the space of locally uniform $\alpha$- H\"{o}lder continuous functions.


We note the general fact that, for a sequence of nonnegative numbers.
\begin{lemma}\label{seqresult}
Let $\{c_n\}$ be a sequence of nonnegative numbers. Then
\begin{align*}
(i)\quad  \sum_{k=1}^{\INF} \sum_{n=0}^{\INF} k \; c_{k+n} &= \sum_{j=1}^{\INF} \frac{j(j+1)}{2} \; c_{j}, \\ \nn
(ii)\quad \sum_{k=1}^{\INF} \sum_{n=0}^{\INF} c_{k+n} &= \sum_{j=1}^{\INF} j\; c_{j},
\end{align*}
whenever one of the sides in (i) and (ii) is finite.
\begin{proof}
(i) Indeed, if we introduce the change of index $j=k+n$, for $k\geq 1$, ($j-n \geq 1,$ and $n \leq j-1$) we get
\[\sum_{k=1}^{\INF} \sum_{n=0}^{\INF} k c_{k+n} = \sum_{j=1}^{\INF} \sum_{n=0}^{j-1} (j-n) \; c_{j} =   \sum_{j=1}^{\INF} c_{j} \sum_{n=0}^{j-1} (j-n) = \sum_{j=1}^{\INF} \frac{j(j+1)}{2} c_{j}.\]

(ii) Indeed the change of index $j=k+n$ for $k\geq 1$ yields
\[\sum_{k=1}^{\INF} \sum_{n=0}^{\INF}c_{k+n} = \sum_{j=1}^{\INF} \sum_{n=0}^{j-1}c_{j} =   \sum_{j=1}^{\INF} c_{j} \sum_{n=0}^{j-1} 1
= \sum_{j=1}^{\INF} jc_j.\]
\end{proof}
\end{lemma}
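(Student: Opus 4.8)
The plan is to prove both identities by the same device: reorganize each double sum along the anti-diagonals $\{(k,n):k+n=j\}$ and count the contribution of each pair to the coefficient of $c_j$. Since every term is nonnegative, I would first invoke Tonelli's theorem (equivalently, the unconditional convergence of series of nonnegative terms) to justify freely rearranging and regrouping the double sums. This immediately gives that both sides of (i) and of (ii) lie in $[0,\infty]$ and are equal as extended reals, so that finiteness of one side forces finiteness of the other, which is exactly the caveat in the statement.

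For the rearrangement itself, I would set $j=k+n$ with $k\geq 1$ and $n\geq 0$. For a fixed $j\geq 1$ the admissible pairs are $k\in\{1,\dots,j\}$ with $n=j-k\in\{0,\dots,j-1\}$, so there are exactly $j$ of them. To obtain (ii), each such pair contributes a single $c_j$, whence the coefficient of $c_j$ is the number of pairs, namely $j$, giving $\sum_{j\geq 1} j\,c_j$. To obtain (i), the pair $(k,n)$ contributes $k\,c_j$, so the coefficient of $c_j$ is $\sum_{k=1}^{j}k=\tfrac{j(j+1)}{2}$, yielding $\sum_{j\geq 1}\tfrac{j(j+1)}{2}\,c_j$. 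Concretely I would carry out (i) by fixing $n$ first and writing $k=j-n$, so that $\sum_{k\geq 1}\sum_{n\geq 0}k\,c_{k+n}=\sum_{j\geq 1}\bigl(\sum_{n=0}^{j-1}(j-n)\bigr)c_j$, and then evaluate the inner sum as $\tfrac{j(j+1)}{2}$ by reindexing $m=j-n$.

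I do not expect any genuine obstacle here: the identities are elementary combinatorial rearrangements over a triangular index region, and the only point requiring care is the interchange of the order of summation. That interchange is licensed precisely by the nonnegativity hypothesis on $\{c_n\}$, which is why the lemma is stated for nonnegative sequences and why the conclusion carries the proviso that one side be finite. In the later applications I expect these identities to be used to collapse weighted double sums coming from the Fourier coefficients of A-analytic maps into single sums controlled by norms of the form $\sum_j j\,|v_{-j}|$ or $\sum_j j^2\,|v_{-j}|$, so the exact weights $j$ and $\tfrac{j(j+1)}{2}$ are the quantities that matter.
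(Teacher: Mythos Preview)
Your proposal is correct and is essentially the paper's own argument: both proofs reindex via $j=k+n$, group terms along the anti-diagonals, and evaluate the resulting inner sums $\sum_{n=0}^{j-1}(j-n)=\tfrac{j(j+1)}{2}$ and $\sum_{n=0}^{j-1}1=j$. Your explicit invocation of Tonelli to justify the rearrangement is a welcome addition that the paper leaves implicit.
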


In several of the arguments we make use of the following Bernstein's lemma below (see, e.g., \cite{katznelson}).

\begin{lemma}\label{bernstein_lemma} Let $f\in C^{k,\alpha}(\sph)$, $\alpha>1/2$, and $\{\hat{f}_n\}$ be the sequence of its Fourier coefficients. Then
$\displaystyle\sum_{n=-\infty}^\infty\; |n|^{k}|\hat{f}_n|<\infty.$
\end{lemma}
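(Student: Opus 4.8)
The plan is to reduce to the base case $k=0$ and then run the classical dyadic-block argument, the only place where the hypothesis $\alpha>1/2$ is genuinely used.

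First I would reduce to $k=0$. If $f\in C^{k,\alpha}(\sph)$ then $g:=f^{(k)}\in C^{0,\alpha}(\sph)=C^\alpha(\sph)$, and integrating by parts $k$ times (the boundary terms cancelling by periodicity) shows that the $n$-th Fourier coefficient of $g$ equals $(in)^k\hat f_n$, so that $|\hat g_n|=|n|^k|\hat f_n|$ for every $n\in\BZ$ (the $n=0$ term contributing $0$ when $k\ge 1$). Thus the claimed bound $\sum_n|n|^k|\hat f_n|<\INF$ is exactly $\sum_n|\hat g_n|<\INF$, and it suffices to prove the statement for $k=0$, i.e. that an $\alpha$-H\"older function with $\alpha>1/2$ has absolutely summable Fourier coefficients.

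Next, for $g\in C^\alpha(\sph)$, I would exploit the second-moment control coming from translation. Writing $g_h(\cdot)=g(\cdot+h)$, its $n$-th Fourier coefficient is $e^{inh}\hat g_n$, so Parseval's identity gives
\[
\|g_h-g\|_{L^2(\sph)}^2=\sum_{n}|e^{inh}-1|^2|\hat g_n|^2=4\sum_n\sin^2\!\big(nh/2\big)\,|\hat g_n|^2.
\]
On the other hand the H\"older bound $|g(x+h)-g(x)|\le [g]_\alpha|h|^\alpha$ yields $\|g_h-g\|_{L^2}^2\le C|h|^{2\alpha}$, whence
\[
\sum_n\sin^2\!\big(nh/2\big)\,|\hat g_n|^2\le C\,|h|^{2\alpha}\qquad\text{for all }h.
\]

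Finally I would localize in dyadic frequency blocks. For $p\ge 0$ choose $h_p=\pi/2^{p+1}$; then for $2^p\le |n|<2^{p+1}$ the argument $|n|h_p/2$ lies in $[\pi/4,\pi/2)$, so $\sin^2(nh_p/2)\ge 1/2$. Restricting the previous inequality to this block gives $\sum_{2^p\le|n|<2^{p+1}}|\hat g_n|^2\le 2C\,h_p^{2\alpha}=C'2^{-2\alpha p}$. Since the block contains $O(2^p)$ indices, Cauchy--Schwarz produces
\[
\sum_{2^p\le|n|<2^{p+1}}|\hat g_n|\le\Big(C''2^{p}\Big)^{1/2}\Big(C'2^{-2\alpha p}\Big)^{1/2}=C'''\,2^{p(\frac12-\alpha)}.
\]
Summing over $p\ge 0$ (which exhausts all $|n|\ge1$), together with the single term $n=0$, yields a geometric series with ratio $2^{\frac12-\alpha}$, which converges precisely because $\alpha>1/2$; this is the crux of the argument and the only step where the exponent threshold is essential.
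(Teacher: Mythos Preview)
Your argument is correct: the reduction to $k=0$ via integration by parts, the Parseval identity for $g_h-g$, and the dyadic-block Cauchy--Schwarz estimate together constitute the standard proof of Bernstein's theorem, and the threshold $\alpha>1/2$ enters exactly where you say it does, in the convergence of the geometric series $\sum_p 2^{p(1/2-\alpha)}$.

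As for comparison with the paper: the paper does not prove this lemma at all; it simply cites it as a known result (referring to Katznelson's \emph{An Introduction to Harmonic Analysis}). What you have written is precisely the classical argument one finds in that reference, so there is no alternative route to contrast---you have supplied the proof the paper chose to omit.
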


To characterize traces of A-analytic functions we need to control the decay in the Fourier terms. We work in the following Banach spaces
\begin{equation} \label{lGamdefn}
 l^{1,1}_{\INF}(\Gamma): = \left \{ \bv=  \langle v_0, v_{-1}, ... \rangle  : \sup_{w\in \Gam}\sum_{j=1}^{\INF}  j \lvert v_{-j}(w) \rvert < \INF \right \},
\end{equation}
and
\begin{equation} \label{CepsGamdefn}
 C^{\eps}(\Gamma ; l_1) := \left \{ \bv =  \langle v_0, v_{-1}, ... \rangle :
\sup_{\xi\in \Gam} \lVert \bv(\xi)\rVert_{\ds l_{1}} + \underset{{\substack{
            \xi,\eta \in \Gam \\
            \xi\neq \eta } }}{\sup}
 \frac{\lVert \bv(\xi) - \bv(\eta)\rVert_{\ds l_{1}}}{|\xi - \eta|^{ \eps}} < \INF \right \},
\end{equation}where $l^1$ is the space of sumable sequences. By replacing $\Gamma$ with $\overline\OM$ and $l_{1}$ with $l_{\INF}$ in \eqref{CepsGamdefn} we similarly define  $C^{\eps}(\overline\OM ; l_1)$, respectively, $C^{\eps}(\ol\OM ; l_{\INF})$, where $l_{\INF}$ denotes the space of bounded sequences.

We describe next the two operators which define the Hilbert transform associated with A-analytic maps.
For $\bv\in C^\eps(\Gamma,l_1)$, we consider the Cauchy integral operators defined componentwise by
\begin{equation} \label{CauchyIntEq}
(C\bv)_{n}(\xi) := (Cv_n)(\xi) =\frac{1}{2\pi i} \int_{\Gam } \frac{v_{n}(w)}{w - \xi} dw,\quad\xi\in\OM,
\end{equation}and
\begin{equation} \label{CauchyIntBd}
(S\bv)_{n}(\xi) := (Sv_n)(\xi)=\frac{1}{\pi i} \int_{\Gam } \frac{v_{n}(w)}{w - \xi} dw, \quad\xi\in\Gamma,\quad n=0,-1,-2,...
\end{equation}
The later integral is understood in the Cauchy principal value sense.

The following result is a componentwise extension of Sokhotski-Plemelj formula (e.g., \cite{muskhellishvili}) to sequence valued maps.
\begin{prop}[Sokhotski-Plemelj]\label{Plemeljpropext}
Let $\bv \in C^\eps(\Gam;l_1)$ as in \eqref{CepsGamdefn}.
Then, for every $\xi_0\in\Gamma$, the limit
\begin{equation}\label{plemelj_l1}
\lim_{\OM\ni \xi \to \xi_{0}}\left\| (C\bv) (\xi) - \frac{1}{2} \bv (\xi_0) - \frac{1}{2} S\bv(\xi_0)\right\|_{l_1}=0
\end{equation}
defines an extension of $C\bv$ from $\OM$ to $\ol{\OM}$ as a
$Holder$ continuous map with values in $l_{1}$, i.e,
\begin{equation}\nn
C : C^\eps(\Gam ; l_{1})\longrightarrow C^{ \eps}\left( \ol{\OM}; l_{1} \right) \cap C^{1}\left( \OM; l_{1} \right).
\end{equation}
\end{prop}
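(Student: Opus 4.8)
The plan is to build the sequence-valued statement directly on top of the classical scalar Sokhotski--Plemelj (Privalov) theorem applied to each component $v_n$, so that the genuinely new content is making every estimate uniform in a way that respects the $l_1$ norm rather than the componentwise $C^\eps$ norms. For each fixed $n$ the classical theory already gives that $Cv_n$ extends from $\OM$ to $\ol\OM$ as a $C^\eps$ function whose boundary trace is $\tfrac12 v_n(\xi_0)+\tfrac12 (Sv_n)(\xi_0)$. Summing these scalar facts over $n$ is precisely where care is needed: the hypothesis only controls the $l_1$-valued seminorm $[\bv]:=\sup_{\xi\ne\eta}\|\bv(\xi)-\bv(\eta)\|_{l_1}/|\xi-\eta|^\eps$, and since $[\bv]\le\sum_n[v_n]_{C^\eps(\Gam)}$ with the inequality generally strict, one cannot simply add up the scalar Hölder bounds. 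I expect this mismatch to be the main obstacle, and the whole argument is organized around circumventing it by never separating the components before estimating.

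The device that resolves it is to keep the summation inside the Cauchy integral. Using $\frac{1}{2\pi i}\int_\Gam \frac{dw}{w-\xi}=1$ for $\xi\in\OM$ and the half-residue $\frac{1}{2\pi i}\,\mathrm{p.v.}\int_\Gam\frac{dw}{w-\xi_0}=\tfrac12$ for $\xi_0\in\Gam$, I would write, for $\xi\in\OM$ and $\xi_0\in\Gam$,
\[
(Cv_n)(\xi)-\Big(\tfrac12 v_n(\xi_0)+\tfrac12 (Sv_n)(\xi_0)\Big)
=\frac{1}{2\pi i}\int_\Gam \big(v_n(w)-v_n(\xi_0)\big)\,\frac{\xi-\xi_0}{(w-\xi)(w-\xi_0)}\,dw,
\]
then take absolute values, sum over $n$, and interchange sum and integral to obtain the single scalar bound
\[
\Big\|C\bv(\xi)-\tfrac12\bv(\xi_0)-\tfrac12 S\bv(\xi_0)\Big\|_{l_1}
\le \frac{[\bv]}{2\pi}\int_\Gam |w-\xi_0|^{\eps}\,\frac{|\xi-\xi_0|}{|w-\xi|\,|w-\xi_0|}\,|dw|.
\]
Here the $l_1$ regularity enters exactly once, through $\sum_n|v_n(w)-v_n(\xi_0)|=\|\bv(w)-\bv(\xi_0)\|_{l_1}\le[\bv]\,|w-\xi_0|^\eps$, and the remaining scalar integral is controlled by the classical splitting of $\Gam$ into the arc $\{|w-\xi_0|<2|\xi-\xi_0|\}$ and its complement, which yields a bound $c(\Gam,\eps)\,[\bv]\,|\xi-\xi_0|^\eps$. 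Before invoking this I would first check that $S\bv(\xi_0)\in l_1$: since $\eps>0$ each scalar integral $\int_\Gam|v_n(w)-v_n(\xi_0)|\,|w-\xi_0|^{-1}|dw|$ converges absolutely, and the same sum-inside-the-integral step gives $\|S\bv(\xi_0)\|_{l_1}\le\|\bv(\xi_0)\|_{l_1}+c[\bv]$, so the right-hand side of the Plemelj formula is a genuine element of $l_1$.

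With this estimate in hand the three assertions follow quickly. Letting $\xi\to\xi_0$ shows the displayed limit is zero, and since componentwise the trace equals $\tfrac12 v_n+\tfrac12 Sv_n$, the $l_1$ limit must be $\tfrac12\bv(\xi_0)+\tfrac12 S\bv(\xi_0)$; this simultaneously defines the boundary extension and identifies it. Running the same representation-and-sum argument for two interior points, and for pairs of boundary points, gives the uniform estimate $\|C\bv(\xi)-C\bv(\eta)\|_{l_1}\le c\,[\bv]\,|\xi-\eta|^\eps$ for all $\xi,\eta\in\ol\OM$, together with the sup bound $\sup_\xi\|C\bv(\xi)\|_{l_1}\le c\|\bv\|_{C^\eps(\Gam;l_1)}$ (obtained by subtracting $\bv(\xi_0)$ at the nearest boundary point), which is exactly the boundedness $C:C^\eps(\Gam;l_1)\to C^\eps(\ol\OM;l_1)$. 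Finally, for the interior regularity I would differentiate under the integral sign, $\del_\xi(Cv_n)(\xi)=\frac{1}{2\pi i}\int_\Gam v_n(w)(w-\xi)^{-2}dw$ and $\dba_\xi(Cv_n)=0$; on a compact subset of $\OM$ at distance $d$ from $\Gam$ one has $\sum_n|\del_\xi(Cv_n)(\xi)|\le \frac{|\Gam|}{2\pi d^2}\sup_w\|\bv(w)\|_{l_1}$ uniformly, so $C\bv$ is $l_1$-valued holomorphic, hence in $C^1(\OM;l_1)$. The only delicate point throughout is the one flagged above, and once the summation is consistently carried inside the integrals every bound reduces to the classical scalar Privalov estimate.
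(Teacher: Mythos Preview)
Your argument is correct, and in fact it is more careful than the paper's own treatment. The paper does not supply a proof of this proposition: it introduces the result as a ``componentwise extension of Sokhotski--Plemelj'' and adds only the single remark that $C\bv\in C^1(\OM;l_1)$ follows from the local character of differentiability together with $\sum_n\int_\Gam|v_{-n}(w)|\,|dw|<\infty$. Your observation that naively summing the scalar Privalov bounds $\|Cv_n\|_{C^\eps(\ol\OM)}\lesssim\|v_n\|_{C^\eps(\Gam)}$ need not give control in $C^\eps(\ol\OM;l_1)$---since in general $\sum_n[v_n]_{C^\eps(\Gam)}$ strictly dominates $[\bv]_{C^\eps(\Gam;l_1)}$---identifies a genuine gap that the phrase ``componentwise extension'' glosses over. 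Your remedy, carrying the summation inside the Cauchy kernel and estimating $\sum_n|v_n(w)-v_n(\xi_0)|\le[\bv]\,|w-\xi_0|^\eps$ in one stroke, is exactly the right device; the remaining steps (the near/far splitting of $\Gam$ to bound the scalar integral by $c|\xi-\xi_0|^\eps$, the absolute convergence showing $S\bv(\xi_0)\in l_1$, and the interior $C^1$ bound via $\dist(\xi,\Gam)^{-2}\sup_w\|\bv(w)\|_{l_1}$) are standard and correctly executed.
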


The fact that $C\bv \in C^1 (\OM ; l_{1})$ follows directly from the
local character of differentiability and from the fact that
$\sum_{n=1}^{\INF}\int_{\Gam}  |v_{-n}(w)dw|  < \INF $.

Next we introduce the second operator which appears in the definition of the Hilbert transform. It is defined componentwise for each index $n \leq 0$,  $\xi \in \overline\OM$, $w\in \Gam$, and $\bv \in l^{1,1}_{\INF}(\Gamma)$ by
\begin{equation} \label{Gdefn}
(G \bv)_{n}(\xi) = \frac{1}{\pi i} \int_{\Gam } \left \{ \frac{dw}{w-\xi}-\frac{d \ol{w}}{\ol{w-\xi}} \right \} \sum_{j=1}^{\infty} \ds v_{n-2j}(w) \left ( \frac{\ol{w-\xi}}{w-\xi} \right) ^{j}.
\end{equation}
We will use the following mapping property of $G$.
\begin{prop}\label{Gprop}
\begin{equation}\nn
G :  C^\eps(\Gam ; l_{1})\cap l^{1,1}_{\INF}(\Gamma) \longrightarrow C^{\eps}\left( \ol{\OM}; l_{\INF} \right) \cap C^{1}\left( \OM; l_{\INF} \right).
    \end{equation}
\end{prop}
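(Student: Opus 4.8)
The plan is to verify the three structural requirements of the target space in turn --- the uniform $l_\INF$ bound on $\ol\OM$, the interior $C^1$ regularity, and the $\eps$-H\"older continuity up to $\Gam$ --- the last being the genuine obstacle. The key preliminary observation is that, after parametrizing $\Gam$ by $w=w(t)$, the bracketed factor in \eqref{Gdefn} reduces to a purely imaginary, \emph{nonsingular} density,
\[
\frac{dw}{w-\xi}-\frac{d\ol w}{\ol{w-\xi}}
=\frac{w'(t)}{w(t)-\xi}\,dt-\ol{\left(\frac{w'(t)}{w(t)-\xi}\right)}\,dt
=2i\,\im\frac{w'(t)}{w(t)-\xi}\,dt .
\]
Indeed, expanding $w(t)-\xi=w'(t_0)(t-t_0)+\tfrac12 w''(t_0)(t-t_0)^2+o((t-t_0)^2)$ about a boundary point $\xi=w(t_0)$ shows that the singular part of $w'/(w-\xi)$ is the real quantity $1/(t-t_0)$, so $\im\{w'/(w-\xi)\}$ remains bounded as $\xi\to\Gam$; since $\Gam$ is $C^2$ with strictly positive curvature this gives a finite $M:=\sup\{|\im (w'(t)/(w(t)-\xi))|:\ \xi\in\ol\OM,\ t\}$. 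As each phase $\left(\ol{w-\xi}/(w-\xi)\right)^{j}$ has modulus one, this at once yields
\[
|(G\bv)_n(\xi)|\le \frac{2M}{\pi}\int_{\Gam}\sum_{j=1}^{\INF}|v_{n-2j}(w)|\,|dw|
\le \frac{2M}{\pi}\,|\Gam|\,\sup_{w\in\Gam}\|\bv(w)\|_{l_1},
\]
uniformly in $n\le 0$ and $\xi\in\ol\OM$, so $G\bv\in L^\INF(\ol\OM;l_\INF)$.

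For the interior $C^1$ statement I would differentiate \eqref{Gdefn} under the integral and the sum at $\xi\in\OM$, where $|w-\xi|\ge\dist(\xi,\Gam)>0$. Each $\xi$-derivative of $\left(\ol{w-\xi}/(w-\xi)\right)^{j}$ contributes a factor $j$, and the $\xi$-derivatives of the kernel raise the power of $1/(w-\xi)$ by one; the resulting series is dominated, locally uniformly in $\OM$, by a constant times $\sup_w\sum_{j\ge1}j\,|v_{n-2j}(w)|\le\sup_w\sum_{k\ge1}k\,|v_{-k}(w)|<\INF$, which is precisely the $l^{1,1}_\INF(\Gam)$ hypothesis. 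This legitimizes differentiation and the continuity of the derivative, giving $G\bv\in C^1(\OM;l_\INF)$.

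The substance of the proof is the H\"older bound $\sup_n|(G\bv)_n(\xi)-(G\bv)_n(\eta)|\le C|\xi-\eta|^\eps$ with $C$ independent of $\xi,\eta\in\ol\OM$ and of $n$. Since the densities $v_{n-2j}$ do not depend on $\xi$, the difference is driven entirely by the kernel $\im\{w'/(w-\xi)\}$ and the phases $\lambda_\xi:=\ol{w-\xi}/(w-\xi)$, and two elementary estimates organize it. Writing $A=w-\xi$, $B=w-\eta$ one has $\ol AB-\ol BA=2i\,\im(\ol A(\xi-\eta))$, whence
\[
|\lambda_\xi-\lambda_\eta|\le \frac{2|\xi-\eta|}{\max(|w-\xi|,|w-\eta|)},\qquad
|\lambda_\xi^{\,j}-\lambda_\eta^{\,j}|\le j\,|\lambda_\xi-\lambda_\eta|,
\]
the latter because $|\lambda_\xi|=|\lambda_\eta|=1$; likewise $|\im\{w'/(w-\xi)\}-\im\{w'/(w-\eta)\}|\le |w'|\,|\xi-\eta|/(|w-\xi||w-\eta|)$.

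Both differences carry the classical Cauchy-type singularity $|\xi-\eta|/(|w-\xi||w-\eta|)$ near $w=\xi,\eta$, which is not integrable as it stands; this is the main obstacle. I would resolve it by the standard Plemelj--Privalov device, splitting $\Gam$ into the sub-arc $\Gam_0$ within distance $2|\xi-\eta|$ of a nearest boundary point $\xi^{*}$ and its complement. After the splitting $\mathrm{ker}_\xi\lambda_\xi^{\,j}-\mathrm{ker}_\eta\lambda_\eta^{\,j}=(\mathrm{ker}_\xi-\mathrm{ker}_\eta)\lambda_\xi^{\,j}+\mathrm{ker}_\eta(\lambda_\xi^{\,j}-\lambda_\eta^{\,j})$, on $\Gam_0$ I would replace each $v_{n-2j}(w)$ by $v_{n-2j}(w)-v_{n-2j}(\xi^{*})$, so that the singular kernels are absorbed against the $l_1$-valued modulus of continuity $\|\bv(w)-\bv(\xi^{*})\|_{l_1}\le[\bv]_{C^\eps(\Gam;l_1)}|w-\xi^{*}|^\eps$, producing the gain $|\xi-\eta|^\eps$, while on $\Gam\setminus\Gam_0$ the size estimates above together with $|w-\xi|,|w-\eta|\gtrsim|\xi-\eta|$ suffice. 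The summation over $j$ of the phase-difference terms costs exactly the factor $j$, which is summable because $\sum_{j}j\,|v_{n-2j}(w)|\le\sum_{k\ge1}k\,|v_{-k}(w)|<\INF$ by $l^{1,1}_\INF(\Gam)$, whereas the kernel-difference terms are summed through $\sum_j|v_{n-2j}(w)|\le\|\bv(w)\|_{l_1}$; in both cases the uniformity in $n\le0$ follows because $\{n-2j\}_{j\ge1}\subset\{-1,-2,\dots\}$. The delicate point is to keep every constant independent of both $n$ and $j$ so that the supremum over $n$ and the $j$-series converge simultaneously --- this is exactly where the two hypotheses $C^\eps(\Gam;l_1)$ and $l^{1,1}_\INF(\Gam)$ are used together.
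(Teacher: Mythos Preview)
Your claim that $M:=\sup\{|\im(w'(t)/(w(t)-\xi))|:\xi\in\ol\OM,\,t\}<\infty$ is false, and this is the step that fails. Your Taylor expansion treats only the case $\xi=w(t_0)\in\Gam$: there $w(t)-\xi$ is asymptotically \emph{tangent} to $\Gam$, so the singular part $1/(t-t_0)$ is indeed real. But for $\xi\in\OM$ at distance $\eps$ from its nearest boundary point $w_0=w(t_0)$, the vector $w_0-\xi$ is \emph{normal} to $\Gam$, so $w'(t_0)/(w_0-\xi)$ is purely imaginary of modulus $1/\eps$, and $\im\{w'/(w-\xi)\}$ peaks like $1/\dist(\xi,\Gam)$. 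Equivalently, the measure you identified equals $2i\,d\fii$ with $\fii=\arg(w-\xi)$, and the Jacobian $d\fii/dt$ is unbounded on $\ol\OM\times\Gam$; only its total mass $\int d\fii=2\pi$ is uniform. This breaks the uniform $l_\infty$ bound as you wrote it (repairable, since the $L^1$ norm of the kernel is bounded) and, more seriously, the H\"older argument, where you invoke $M$ repeatedly. Even restricting to $\xi,\eta\in\Gam$ (where $M$ is finite), your ``size estimates suffice on $\Gam\setminus\Gam_0$'' does not close for the kernel-difference term: $\int_{\Gam\setminus\Gam_0}\frac{|\xi-\eta|}{|w-\xi||w-\eta|}\,|dw|\sim\int_{2\delta}^{L}\delta s^{-2}\,ds=O(1)$, not $O(\delta^\eps)$, so a subtraction is needed there as well.

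The paper avoids all of this by choosing, for each $\xi\in\ol\OM$, the polar parametrization $w=\xi+l_\xi(\fii)e^{i\fii}$. Then the kernel becomes exactly $2i\,d\fii$ and the phase becomes $e^{-2ij\fii}$, \emph{both independent of $\xi$}; every bit of $\xi$-dependence is transferred to the argument of $v_{n-2j}$, namely the boundary point $\xi+l_\xi(\fii)e^{i\fii}$. Since $\xi\mapsto l_\xi(\cdot)$ is Lipschitz on $\ol\OM$ uniformly in $\fii$, the $C^\eps(\ol\OM;l_\infty)$ estimate follows in two lines directly from the hypothesis $\bv\in C^\eps(\Gam;l_1)$, with no Plemelj--Privalov splitting and no phase-difference bookkeeping; the $l^{1,1}_\infty(\Gam)$ hypothesis enters only in the interior $C^1$ bound, exactly as you had it.
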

\begin{proof}
Let $ \xi, \, \xi_{0} \in \OM$. Since $\bv \in l^{1,1}_{\INF}(\Gamma)$, it follows from \eqref{Gdefn} that each component $(G \bv)_{n}(\xi)$  is well-defined for $n \leq0$.

Now let $ w(\fii) = \xi \; + \; l_{\xi}(\fii)  e^{i \fii}$ be a parametrization of $\Gamma$, where $l_{\xi}(\fii) = |\xi - w(\fii)| $. Since the boundary $\Gamma$ is at least $C^1$, we have that $\xi\mapsto l_\xi$ is Lipschitz in $\ol\OM$ uniformly in $\fii\in [0,2\pi]$, i.e.,
\begin{equation}\label{lipshitz_constant}
\left| l_{\xi}(\fii) - l_{\xi_{0}}(\fii) \right|\leq L |\xi - \xi_{0}|,
\end{equation}for some constant $L>0$. Moreover,
\begin{equation*}
\frac{dw}{w-\xi} = \left[ \frac{l_{\xi}'}{l_{\xi}} + i \right] d\fii ,\quad
\frac{d\ol{w}}{\ol{w-\xi}} = \left[ \frac{l_{\xi}'}{l_{\xi}} - i \right] d\fii,  \quad \left ( \frac{\ol{w-\xi}}{w- \xi} \right) = e^{-2i\fii},
\end{equation*}
and note that the measure
$\displaystyle\frac{dw}{w-\xi} -\frac{d\ol{w}}{\ol{w-\xi}}=2id\fii$ in \eqref{Gdefn} is nonsingular.

For each integer $n\leq0$, the equation \eqref{Gdefn} rewrites
\begin{equation}\nn
 (G \, \bv)_{n}(\xi) = \ds \frac{2}{\pi } \int_{0}^{2\pi}
\sum_{j=1}^{\INF}  g_{n-2j}(\xi + l_{\xi}(\fii) \, e^{i \fii}) \; e^{-2ij\fii} d\fii,\; \xi\in\overline\OM.
\end{equation}

Since $\bv\in C^{\eps}(\Gam ; l_{1})$, we have
\begin{equation}\nn
\kappa:=  \underset{{\substack{
            w_{1},w_{2} \in \Gam \\
            w_{1} \neq w_{2} } }}{\sup} \sum_{n=0}^{\INF} \, \frac{ \ds \lvert g_{-n}(w_{1}) - g_{-n}(w_{2}) \rvert}{ \ds \lvert w_1 - w_2 \rvert^{\displaystyle \eps}}< \INF.
\end{equation}
We estimate for each $n$,
\begin{align*}
\lvert(G\bv)_{n}(\xi)& - (G \bv)_{n}(\xi_{0})\rvert\\
& \leq\frac{2}{\pi}  \sum_{j=1}^{\INF}
\int_{0}^{2\pi} \left |v_{n-2j}(\xi + l_{\xi}(\fii) \, e^{i \fii}) - v_{n-2j}(\xi_{0} + l_{\xi_{0}}(\fii) \, e^{i \fii}) \,\right | d\fii \\ \nn
& \leq
\frac{2\kappa}{\pi}
\int_{0}^{2\pi} \left | (\xi - \xi_{0})  + \left [ l_{\xi}(\fii) - l_{\xi_{0}}(\fii) \right ]  \, e^{i \fii} \right |^{\eps} d\fii, \\ \nn
& \leq
\frac{2\kappa}{\pi}
\int_{0}^{2\pi} \; \left ( 2 \left| \xi - \xi_{0} \right |^{\eps} + \left| l_{\xi}(\fii) - l_{\xi_{0}}(\fii) \right|^{\eps}
\right )  d\fii,  \\ \nn
& \leq
( 8\kappa + 4\kappa L^\eps )\left| \xi - \xi_{0} \right|^{\eps}.
\end{align*}
In the third inequality above we used $|a+b|^{\eps} \leq 2|a|^{\eps} + |b|^{\eps}$, and the fourth inequality uses \eqref{lipshitz_constant}.

Next we show that $G\bv\in C^{1}(\OM ; l_{\INF})$. Suffices to carry
the estimates in the neighborhood $\ol{B(\xi_0,r_0)}\subset\OM$ of an arbitrary point $\xi_0 \in \OM $, where $r_0=\dist(\xi_0,\Gamma)/2>0$.

For $\xi \in B(\xi_0,r_0)$ arbitrary we have
\begin{align}\label{gradientEstimate}
\left | \nabla_{\xi} \left \{ \frac{dw}{w-\xi}-\frac{d \ol{w}}{\ol{w-\xi}} \right \} \right | &= \left |  2 \im \left ( \frac{dw}{{(w-\xi)}^{2}} \right )  \right |\leq c|dw|,
\end{align}where $c= 2/{r_0^2}$.

For each $n\leq 0$, we have
\begin{align*}
\nabla_{\xi} (G \, \bv)_{n}(\xi)
&= \frac{1}{\pi i} \int_{\Gam } \nabla_{\xi} \left \{ \frac{dw}{w-\xi}-\frac{d \ol{w}}{\ol{w-\xi}} \right \} \sum_{j=1}^{\INF} \ds v_{n-2j}(w) \left ( \frac{\overline{w-\xi}}{w-\xi} \right) ^{j}  \\ &\qquad \quad {}  + \frac{1}{\pi i}\int_{\Gam }  \left \{ \frac{dw}{w-\xi}-\frac{d \ol{w}}{\ol{w-\xi}} \right \} \sum_{j=1}^{\INF} \ds v_{n-2j}(w)  \nabla_{\xi} \left ( \frac{\ol{w-\xi}}{w-\xi} \right) ^{j}.
\end{align*}

For  $\bv \in C^\eps(\Gam ; l_{1})\cap l^{1,1}_{\INF}(\Gamma)$, and $\xi\in \ol{B(\xi_0,r_0)}$, the right hand side above is bounded uniformly in $n$, since
\begin{align*}
 \left | \nabla_{\xi} (G \, \bv)_{n}(\xi) \right |
 &\leq \frac{ c}{\pi} \int_{\Gam } \sum_{j=1}^{\infty} \ds \left | v_{n-2j}(w) \right | dw   \\
 &+  \frac{ c}{\pi} \int_{\Gam } \sum_{j=1}^{\INF} \ds j \, \left | v_{n-2j}(w) \right | dw <\infty.
 \end{align*}
Therefore $G\bv\in C^1(\OM;l_\infty).$
\end{proof}
\section{The Hilbert transform of A-analytic maps} \label{S:SeqVResults}
In this section we introduce the Hilbert transform  $\HT_0$  associated with the traces on $\Gamma$ of A-analytic maps in $\OM$.

Recall the operator $S$ and $G$ as defined in \eqref{CauchyIntBd}, and \eqref{Gdefn}.
\begin{definition}\label{hilbertT_definition}
The Hilbert transform  $\HT_0$ for $\bg=\langle g_0,g_{-1},...\rangle \in l^{1,1}_{\INF}(\Gamma)\cap C^\eps(\Gamma;l_1)$ is defined by
\begin{align}\label{hilbertT}
\HT_0\bg:=i[S+G]\bg,
\end{align} and written componentwise, for $n=0,-1,-2,...$, as
\begin{align*}
(\HT_0\bg)_n(\xi)=&\frac{1}{\pi} \int_{\Gam } \frac{g_{n}(w)}{w - \xi} dw\\
&+\frac{1}{\pi} \int_{\Gam } \left \{ \frac{dw}{w-\xi}-\frac{d \ol{w}}{\ol{w-\xi}} \right \} \sum_{j=1}^{\infty} \ds g_{n-2j}(w)
\left( \frac{\ol{w-\xi}}{w-\xi} \right) ^{j},\quad \xi\in\Gamma.
\end{align*}
\end{definition}
The mapping properties of $S$, and $G$ in Propositions \ref{Plemeljpropext} and \ref{Gprop}, together with the continuous embedding of $l_1\subset l_\infty$, yields
\begin{prop}\label{Hproperties}
\begin{equation}
\HT_0 :  C^\eps(\Gam ; l_{1})\cap l^{1,1}_{\INF}(\Gamma) \longrightarrow C^{\eps}\left( \Gamma; l_{\INF} \right),
\end{equation}is a continuous map.
\end{prop}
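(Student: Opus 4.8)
The plan is to reduce the claim to the two mapping properties already in hand, namely the Sokhotski–Plemelj Proposition \ref{Plemeljpropext} for $S$ and Proposition \ref{Gprop} for $G$, together with the continuous embedding $l_1\hookrightarrow l_\infty$. Since $\HT_0=i[S+G]$ is a sum of two linear operators, it suffices to show that each of $S$ and $G$ maps the domain $C^\eps(\Gam;l_1)\cap l^{1,1}_\INF(\Gamma)$ continuously into $C^\eps(\Gamma;l_\infty)$, and then invoke linearity.

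First I would handle $S$ using Proposition \ref{Plemeljpropext}. For $\bg\in C^\eps(\Gam;l_1)$ that proposition asserts $C\bg\in C^\eps(\ol\OM;l_1)$, so in particular its trace $(C\bg)|_\Gamma$ belongs to $C^\eps(\Gamma;l_1)$. The Plemelj relation \eqref{plemelj_l1} identifies this trace as $\tfrac12\bg+\tfrac12 S\bg$, whence
\[
S\bg = 2\,(C\bg)\big|_\Gamma - \bg.
\]
Both terms on the right lie in $C^\eps(\Gamma;l_1)$ — the first by the proposition, the second by hypothesis — so $S\bg\in C^\eps(\Gamma;l_1)$, with $l_1$-norm controlled linearly by $\|\bg\|_{C^\eps(\Gam;l_1)}$. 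The embedding $l_1\hookrightarrow l_\infty$ then gives $S\bg\in C^\eps(\Gamma;l_\infty)$ with a corresponding bound.

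Next I would handle $G$ directly from Proposition \ref{Gprop}, which already delivers $G\bg\in C^\eps(\ol\OM;l_\infty)$ for $\bg$ in the stated domain; restricting the trace to $\Gamma$ gives $G\bg|_\Gamma\in C^\eps(\Gamma;l_\infty)$, again with a norm bound linear in the input (the bound is essentially the constant $8\kappa+4\kappa L^\eps$ appearing in the proof of that proposition, which is controlled by $\|\bg\|_{C^\eps(\Gam;l_1)}$). Summing, $\HT_0\bg=i[S\bg+G\bg]\in C^\eps(\Gamma;l_\infty)$, and continuity of the map follows from the two linear estimates.

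The only genuinely delicate point — and the step I would be most careful with — is the extraction of the boundary behavior of $S$ from Proposition \ref{Plemeljpropext}: that proposition is phrased as a one-sided interior limit combining $C\bg$, the density $\bg$, and $S\bg$, so I must use that the interior trace $(C\bg)|_\Gamma$ is itself H\"older with values in $l_1$ (which is part of the proposition's conclusion) in order to conclude $S\bg\in C^\eps(\Gamma;l_1)$ rather than merely $l_\infty$. Keeping the $l_1$ versus $l_\infty$ bookkeeping straight across the two operators — $S$ landing in the smaller space $l_1$ while $G$ lands only in $l_\infty$ — is what pins down the codomain $C^\eps(\Gamma;l_\infty)$ for the sum.
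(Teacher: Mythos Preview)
Your proposal is correct and follows exactly the approach the paper indicates: the paper's entire argument is the one-line observation that the mapping properties of $S$ and $G$ from Propositions~\ref{Plemeljpropext} and~\ref{Gprop}, together with the embedding $l_1\hookrightarrow l_\infty$, yield the result. You have simply spelled out in detail how to extract $S\bg\in C^\eps(\Gamma;l_1)$ from the Plemelj relation $S\bg = 2(C\bg)|_\Gamma - \bg$, which the paper leaves implicit.
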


The name of this transform will be motivated in the next section, where we show that traces on $\Gamma$ of A-analytic maps lie in the kernel of $[I+i\HT_0]$ in analogy with the classical Hilbert transform for analytic functions.

At the heart of the theory of A-analytic maps lies a Cauchy integral formula. A class of such Cauchy integral
formulae were first introduced by Bukhgeim in \cite{bukhgeim_book}. The explicit form \eqref{vnDefn} below is due to Finch \cite{finch}; see also \cite{tamasan02, tamasan03, tamasan07} where one works with square summable sequences.

\begin{theorem}\label{Th1}
Let $\bg=\langle g_0,g_{-1},...\rangle \in l^{1,1}_{\INF}(\Gamma)\cap C^\eps(\Gamma;l_1)$ be a sequence valued map defined at the boundary $\Gamma$.
For $\xi\in\OM$ and each index  $n \leq 0$ we define $v_n(\xi)$ by
\begin{equation} \label{vnDefn}
v_{n}(\xi) := \frac{1}{2}(G \bg)_{n}(\xi) + (C \bg)_{n}(\xi).
\end{equation}
Then $\bv:=\langle v_0,v_{-1},...\rangle \in C^{1,\eps}(\OM;l_\infty)$, and, for each $n=0,-1,...,$
\[\ol{\partial}v_{n}(\xi) +\del v_{n-2}(\xi) =0,\quad\xi\in\Omega.\]
Moreover, for each $n=0,-1,-2,...$, the component $v_n$ extends continuously to $\ol{\OM}$ with limiting values
\begin{equation}
v_{n}^{+}(\xi_{0}):=  \underset{\OM \owns \xi \to \xi_{0} \in \Gam }{\lim} v_{n}(\xi),
\end{equation}
where
\begin{equation} \label{vn+Defn}
v_{n}^{+}(\xi_{0}) = \frac{1}{2} (G\bg)_{n}(\xi_0) + \frac{1}{2}(S+I)g_{n}(\xi_0).
\end{equation}
\end{theorem}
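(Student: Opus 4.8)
The plan is to establish three assertions in turn: (1) the map $\bv$ defined by \eqref{vnDefn} lies in $C^{1,\eps}(\OM;l_\infty)$; (2) its components satisfy the A-analyticity system $\ol\partial v_n + \partial v_{n-2}=0$ in $\OM$; and (3) $\bv$ extends continuously to $\ol\OM$ with the stated boundary values \eqref{vn+Defn}. The regularity in (1) and the boundary-limit formula in (3) are largely harvested from the preliminary machinery already in place. Indeed, Proposition \ref{Gprop} gives $G\bg\in C^\eps(\ol\OM;l_\infty)\cap C^1(\OM;l_\infty)$, and Proposition \ref{Plemeljpropext} gives $C\bg\in C^\eps(\ol\OM;l_1)\cap C^1(\OM;l_1)$; since $l_1\hookrightarrow l_\infty$ continuously, the sum \eqref{vnDefn} inherits membership in $C^{\eps}(\ol\OM;l_\infty)\cap C^1(\OM;l_\infty)$, and I would upgrade this to $C^{1,\eps}$ by differentiating under the (nonsingular, after the $2id\fii$ reduction) integral sign and re-running the H\"older estimate from the proof of Proposition \ref{Gprop} on the gradient. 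For the boundary values, the Cauchy part contributes $\tfrac12(S+I)g_n(\xi_0)$ by the componentwise Sokhotski--Plemelj limit \eqref{plemelj_l1}, while the $G$ part is already continuous up to $\Gamma$ by Proposition \ref{Gprop}, so passing $\OM\ni\xi\to\xi_0$ in $\tfrac12(G\bg)_n+(C\bg)_n$ yields exactly \eqref{vn+Defn}.

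The genuine content, and the step I expect to be the main obstacle, is assertion (2): verifying the recurrence $\ol\partial v_n+\partial v_{n-2}=0$. The Cauchy piece $(C\bg)_n$ is holomorphic in $\xi$, so $\ol\partial (C\bg)_n=0$, and the entire burden falls on showing that the operator $G$ is engineered precisely so that $\ol\partial(G\bg)_n = -2\,\partial(C\bg)_{n-2}$ (the factor $\tfrac12$ in \eqref{vnDefn} accounting for the balance). The strategy is to compute $\ol\partial_\xi$ and $\partial_\xi$ of the kernel pieces in \eqref{Gdefn}. One uses the elementary identities $\ol\partial_\xi\frac{1}{w-\xi}=0$, $\partial_\xi\frac{1}{w-\xi}=\frac{1}{(w-\xi)^2}$ together with their conjugates, and the crucial computation of how $\ol\partial_\xi$ and $\partial_\xi$ act on the factor $\left(\frac{\ol{w-\xi}}{w-\xi}\right)^{j}$. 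Applying $\ol\partial_\xi$ to a term of index $j$ should, after reorganizing the telescoping sum in $j$, reproduce a term of the form $\partial(\cdots)_{n-2}$; the summation-reindexing lemma (Lemma \ref{seqresult}) and the $l^{1,1}_\INF$ bound guarantee that all the interchanges of $\ol\partial$, $\partial$ with $\sum_{j}$ and $\int_\Gamma$ are justified, since the differentiated series converges uniformly on compact subsets of $\OM$ as shown in the gradient estimate of Proposition \ref{Gprop}.

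Concretely, I would fix $\xi\in\OM$, differentiate the series \eqref{Gdefn} term by term, and track how the power $\left(\frac{\ol{w-\xi}}{w-\xi}\right)^{j}$ shifts the index: the antiholomorphic derivative of the $j$-th summand feeds into the $(j{-}1)$-st slot, producing after summation a Cauchy-type integral whose holomorphic derivative is exactly $\partial(C\bg)_{n-2}$. This is the place where the specific form of the kernel $\frac{dw}{w-\xi}-\frac{d\ol w}{\ol{w-\xi}}$ paired with the geometric weight in $j$ does the work, and getting the bookkeeping of indices and the $1/(w-\xi)^2$ factors to match \eqref{eq:TEq3}-style recurrence is the delicate calculation. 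Once the term-by-term identity $\ol\partial(G\bg)_n+2\,\partial(C\bg)_{n-2}=0$ is in hand for $n\le 0$, combining it with $\ol\partial(C\bg)_n=0$ gives $\ol\partial v_n+\partial v_{n-2}=\tfrac12\ol\partial(G\bg)_n+\ol\partial(C\bg)_n+\tfrac12\partial(G\bg)_{n-2}+\partial(C\bg)_{n-2}$, and one checks the $\partial(G\bg)_{n-2}$ contribution cancels against the leftover, closing the proof.
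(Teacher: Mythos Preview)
Your treatment of the regularity in (1) and the boundary limit in (3) is fine and matches the paper: both follow directly from Propositions \ref{Plemeljpropext} and \ref{Gprop}, plus the componentwise Sokhotski--Plemelj formula.

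The gap is in step (2). The identity you single out as carrying ``the entire burden'', namely
\[
\ol\partial (G\bg)_n \;=\; -2\,\partial (C\bg)_{n-2},
\]
is \emph{false}. If you compute $\ol\partial(G\bg)_n$ termwise from \eqref{Gdefn} you get an infinite sum over $j\ge 1$, not a single Cauchy-type term; only the $j=1$ summand matches $-2\,\partial(C\bg)_{n-2}$. Consequently, plugging your claimed identity into
\[
\ol\partial v_n+\partial v_{n-2}=\tfrac12\ol\partial(G\bg)_n+\ol\partial(C\bg)_n+\tfrac12\partial(G\bg)_{n-2}+\partial(C\bg)_{n-2}
\]
leaves exactly $\tfrac12\partial(G\bg)_{n-2}$, and there is no ``leftover'' for it to cancel against. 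The correct coupled identity is
\[
\ol\partial(G\bg)_n+\partial(G\bg)_{n-2}+2\,\partial(C\bg)_{n-2}=0,
\]
so the $\ol\partial$ of the $G$-part at level $n$ and the $\partial$ of the $G$-part at level $n-2$ must be handled \emph{together}; neither one by itself reduces to a Cauchy term.

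The paper sidesteps this bookkeeping trap by first merging $\tfrac12(G\bg)_n+(C\bg)_n$ into a single series,
\[
2\pi i\, v_n(\xi)=\sum_{j\ge 0}\int_\Gamma \frac{g_{n-2j}(w)\,\ol{(w-\xi)}^{\,j}}{(w-\xi)^{j+1}}\,dw
-\sum_{j\ge 1}\int_\Gamma \frac{g_{n-2j}(w)\,\ol{(w-\xi)}^{\,j-1}}{(w-\xi)^{j}}\,d\ol w,
\]
and then differentiating this combined expression. Computing $2\pi i\,\ol\partial v_n$ and $2\pi i\,\partial v_{n-2}$ directly from it yields two series that are visibly negatives of each other, and the recurrence drops out without any separate cancellation to arrange. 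Your termwise-differentiation strategy and the justification via the $l^{1,1}_\infty$ bound are sound; the fix is simply to apply them to the merged series rather than to $G$ and $C$ separately.
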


\begin{proof}
Let $\xi \in \OM$ and $n\leq 0$ arbitrarily fixed. Since $\bg \in l^{1,1}_{\INF}(\Gamma)\cap C^\eps(\Gamma,l_1)$, both $(G \bg)_{n}(\xi)$ and  $(C\bg)_{n})(\xi)$ are well-defined. Moreover, from Propositions \ref{Plemeljpropext} and \ref{Gprop}
we have that $ \bv\in C(\ol\OM;l_\INF)\cap C^1(\OM;l_\INF)$.

For each $n\leq 0$, by its definition in  \eqref{vnDefn}, we have
\begin{align*}
2 \pi i v_{n}(\xi) &= \sum_{j=0}^{\infty} \int_{\Gam}
\frac{ g_{n-2j}(w)\ol{(w-\xi)}^{j}}{(w-\xi)^{j+1}}dw
 \\
 & \qquad - \sum_{j=1}^{\INF} \int_{\Gam}
\frac{ g_{n-2j}(w)\ol{(w-\xi)}^{j-1}}{(w-\xi)^{j}}d\ol{w}.
\end{align*}

From where
\begin{align}\label{del_v_n}
2\pi i\del v_{n-2}(\xi)=&\sum_{j=1}^{\INF} \int_{\Gam}
\frac{ jg_{n-2j}(w) \ol{(w-\xi)}^{j-1}}{(w-\xi)^{j+1}}dw\nn\\& -
\sum_{j=2}^{\INF} \int_{\Gam}\frac{(j-1) g_{n-2j}(w)\ol{(w-\xi)}^{j-2}}{(w-\xi)^{j}}d\ol{w},
\end{align}and
\begin{align}\label{dbar_v_n}
2\pi i\ol{\del} v_{n}(\xi) = &- \sum_{j=1}^{\INF} \int_{\Gam}
\frac{ j g_{n-2j}(w) \ol{(w-\xi)}^{j-1}}{(w-\xi)^{j+1}}dw \nn\\&+ \sum_{j=2}^{\INF} \int_{\Gam}
\frac{(j-1) g_{n-2j}(w)\ol{(w-\xi)}^{j-2}}{(w-\xi)^{j}}d\ol{w}.
\end{align}
By summing \eqref{del_v_n} and \eqref{dbar_v_n} we obtain $ \ol{\del} v_{n} + \del v_{n-2} = 0$ for each $ n=0,-1,-2,...$.

The regularity $v_n\in C^{1,\eps}(\OM)$ follows from the explicit formula \eqref{del_v_n}, and the fact that
$\displaystyle \xi\mapsto\frac{\ol{(w-\xi)}^{j}}{(w-\xi)^{j+2}}$ are locally uniform $\eps$-H\"{o}lder continuous.

The continuity to the boundary are consequences of Propositions \ref{Plemeljpropext} and \ref{Gprop}.
In the limits below $\xi\in\OM$, and
$\xi_0 \in \Gam$:
\begin{align*}
\underset{ \xi \to \xi_{0} }{\lim} v_{n}(\xi) &= \underset{ \xi \to \xi_{0} }{\lim}
\frac{1}{2} (G \bv)_{n}(\xi) \,+\,  \underset{ \xi \to \xi_{0} }{\lim} (C\bg)_{n})(\xi) \\
&=\frac{1}{2}(G\bv)_{n}(\xi_0) \,+\,  \frac{1}{2}  g_{n}(\xi_0) + \frac{1}{2} \sum_{n=0}^{\INF}(S\bg)_{n}(\xi_0)\\
&=\frac{1}{2}(G\bv)_{n}(\xi_0) + \frac{1}{2}(S+I)g_{n}(\xi_0).
\end{align*}
\end{proof}

The following theorem presents necessary and sufficient conditions for sufficiently regular sequence valued map to be the trace at the boundary of an $A$-analytic function.

\begin{theorem}\label{NecSuf}
Let $\bg =\langle g_0, g_{-1}, g_{-2}, \cdots \cdot \rangle \in l^{1,1}_{\INF}(\Gamma)\cap C^\eps(\Gamma,l^1)$.
For $\bg$ to be boundary value of an $A$-analytic function it is necessary and sufficient that
\begin{equation} \label{NecSufEq}
 (I+i\HT_0) \bg=0.
 \end{equation}
\end{theorem}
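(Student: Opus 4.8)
The plan is to reduce both implications to a single boundary identity coming from Theorem \ref{Th1} together with one short algebraic manipulation. First I would record the computation common to both directions. For arbitrary boundary data $\bg$, let $\bv$ be defined by \eqref{vnDefn}; Theorem \ref{Th1} asserts that $\bv$ is $A$-analytic in $\OM$, lies in $C^{1,\eps}(\OM;l_\infty)$, extends continuously to $\ol\OM$, and has trace
$$v_n^+ = \tfrac12 (G\bg)_n + \tfrac12 (S\bg)_n + \tfrac12 g_n, \quad n\le 0.$$
Since $\HT_0 = i[S+G]$ we have $[S+G]\bg = -i\,\HT_0\bg$, so the first two terms combine to $\tfrac12([S+G]\bg)_n = -\tfrac{i}{2}(\HT_0\bg)_n$, and therefore
$$g_n - v_n^+ = \tfrac12\bigl(g_n + i(\HT_0\bg)_n\bigr) = \tfrac12\bigl((I+i\HT_0)\bg\bigr)_n.$$
Thus the condition $(I+i\HT_0)\bg = 0$ is \emph{exactly equivalent} to $v_n^+ = g_n$ for every $n\le 0$, i.e. to the assertion that the Cauchy-type integral \eqref{vnDefn} reproduces $\bg$ on $\Gamma$.

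For sufficiency I would assume $(I+i\HT_0)\bg=0$ and simply define $\bv$ by \eqref{vnDefn}. Theorem \ref{Th1} already guarantees that $\bv$ is $A$-analytic and continuous up to $\ol\OM$; the displayed identity then forces $v_n^+ = g_n$ for all $n$, so $\bg$ is the boundary value of the $A$-analytic map $\bv$. This direction is essentially immediate once Theorem \ref{Th1} is in hand, with only the regularity class to check, which is provided by Propositions \ref{Plemeljpropext} and \ref{Gprop}.

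For necessity I would suppose $\bg = \bv|_\Gamma$ for some $A$-analytic $\bv\in C(\ol\OM;l_\infty)\cap C^1(\OM;l_\infty)$. Here I invoke the Cauchy integral formula at the heart of the theory (Bukhgeim, in the explicit form of Finch): an $A$-analytic map is \emph{reproduced} from its own trace by \eqref{vnDefn}, i.e. $v_n(\xi) = \tfrac12(G\bg)_n(\xi) + (C\bg)_n(\xi)$ for $\xi\in\OM$. Letting $\OM\ni\xi\to\xi_0\in\Gamma$ and using the limiting values of Theorem \ref{Th1} yields $g_n = v_n^+$, whence $(I+i\HT_0)\bg=0$ by the identity established above.

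The main obstacle is precisely the reproducing property used in the necessity direction. Theorem \ref{Th1} as stated only asserts that the right-hand side of \eqref{vnDefn} is $A$-analytic for an \emph{arbitrary} boundary input, not that a \emph{given} $A$-analytic function coincides with the integral of its own trace. Establishing the latter is equivalent to a uniqueness statement for the Bukhgeim system (an $A$-analytic map with vanishing trace vanishes identically): indeed, if the reproducing formula holds then applying it to a zero-trace $A$-analytic map gives $0$, while conversely uniqueness lets one identify $\bv$ with the $A$-analytic map built from $\bg$ in \eqref{vnDefn}. This is the genuine analytic content one must import from the theory of $A$-analytic functions; the remainder of the argument is the bookkeeping recorded in the first paragraph.
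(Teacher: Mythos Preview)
Your approach is essentially identical to the paper's: both directions use Theorem \ref{Th1} together with the algebraic identity $(I-S-G)\bg=(I+i\HT_0)\bg$, with sufficiency obtained by defining $\bv$ via \eqref{vnDefn} and necessity by invoking the Bukhgeim--Finch Cauchy reproducing formula for $A$-analytic maps. The ``obstacle'' you single out is exactly what the paper also relies on---it simply applies \eqref{vn+Defn} to the given $A$-analytic $\bv$ without further comment---so your reading of where the analytic content lies is accurate and your argument matches the paper's.
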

\begin{proof}

For the necessity let $\bv$ be $A$-analytic as in \eqref{Aanalytic} whose trace $\bv|_{\Gamma}=\bg$, in the sense that
\begin{equation*}
\underset{\OM \owns \xi \to \xi_{0} \in \Gam }{\lim} v_{n}(\xi) = {g_{n}}(\xi_{0}), \quad  n \leq 0.
\end{equation*}
By $\eqref{vn+Defn}$ we obtain
\begin{equation} \nonumber
g_{n}(\xi_{0}) = \frac{1}{2} (G\bv)_{n}(\xi_0) + \frac{1}{2}Sg_{n}(\xi_0) + \frac{1}{2} g_{n}(\xi_0),
\end{equation}or,
\begin{align}\label{equivCharac}
[(I - S-G)\bg]_{n} = 0, \quad n \leq 0.
\end{align}Since $\HT_0=i[S+G]$, \eqref{equivCharac} is a componentwise representation of \eqref{NecSufEq}.

Next we prove sufficiency. Let $\bg \in l^{1,1}_{\INF}(\Gamma)\cap C^\eps(\Gamma,l_1)$ satisfy\eqref{NecSufEq}, and define $\bv$ in $\OM$  by the Cauchy Integral formula \eqref{vnDefn}.
From Propositions  \ref{Plemeljpropext} and \ref{Gprop} we have that $\bv\in C^{1}(\OM;l_\INF) \cap C(\ol{\OM};l_\INF)$, and from
Theorem \ref{Th1} we see that $\ol{\del} v_{n} + \del v_{n-2} = 0$, for each $n \leq 0.$
Therefore $\bv$ is $A$-analytic. Moreover,
\begin{align*}
\underset{\OM \owns \xi \to \xi_{0} \in \Gam }{\lim} v_{n}(\xi)
&= \frac{1}{2} (G\bv)_{n}(\xi_0) + \frac{1}{2}(S+I)g_{n}(\xi_0)\\
&= \frac{1}{2} (I - S)g_{n}(\xi_0) + \frac{1}{2}(S+I)g_{n}(\xi_0)\\
&= g_{n}(\xi_0),
\end{align*}where the first equality uses \eqref{vn+Defn}, whereas the second equality uses \eqref{NecSufEq}.
\end{proof}

\section{Range characterization of the Radon transform} \label{S:MainResults}
This section concerns our main result in the non-attenuated case ($a\equiv 0$). The results require a stronger topology. For $\eps>0$,  we consider the space
$Y_{\eps} = C^{\eps}(\Gam ; l^{1,1}(\sph)) \cap C^{0}(\Gam ; l^{1,2}(\sph))$ i.e
\begin{equation} \label{YGamdefn}
 Y_{\eps} = \left \{ \bg\in l^{1,2}_{\INF}(\Gamma) :  \underset{{\substack{
            \xi, \mu  \in \Gam \\
            \xi \neq \mu } }}{\sup} \sum_{j=1}^{\INF} j\frac{\lvert g_{-j}(\xi) - g_{-j}(\mu) \rvert}{|\xi - \mu|^\eps} < \INF \right \},
\end{equation}where
\begin{equation} \label{lGamdefnY}
 l^{1,2}_{\INF}(\Gamma):= \left \{ \bg =  \langle g_0, g_{-1}, g_{-2}, \cdots \cdot \rangle \; : \sup_{w\in \Gam} \sum_{j=1}^{\INF} j^{2} \,\lvert g_{-j}(w) \rvert < \INF \right \}.
\end{equation}

For the sake of clarity in the statement of the main result we introduce the following projections.
\begin{definition}
Given a function $g \in C(\ol \OM; L^{1}(\sph))$, we consider the projections \begin{align}\label{Pminusplus}
\Pminus (g) := \langle g_0, g_{-1}, g_{-2}, ... \rangle, \qquad \Pplus (g) := \langle g_0, g_{1}, g_{2}, ... \rangle,
\end{align}
where $g_{n}(z) = \frac{1}{2 \pi} \int_{0}^{2 \pi} g(z,\tta) e^{-i n \fii} d\fii,$ for $z\in \ol \OM$, is the $n-th$ Fourier coefficients for $n \in \mathbb{Z}$.
Conversely, given $\bg(z) = \langle g_0(z), g_{-1}(z), g_{-2}(z), ... \rangle \in C(\ol \OM; l^{1})$, we define a corresponding real valued function $g$ on $\ol \OM \times \sph$ by
\begin{align}\label{PstarOp}
\Pstar (\bg) := g_0(z)+ 2 \re\left(\sum_{n = 1}^{\INF} g_{-n}(z)e^{-i n\fii}\right).
\end{align}
\end{definition}

The properties below are immediate:

If $g$ is a function on $\Gam \times \sph$ then
\begin{align}
&(i) \; \Pminus \Pstar \Pminus (g) = \Pminus (g) , \\
&(ii)\; \Pminus (e^{\pm h} g) = (\Pplus e^{\pm h}) \ast_{n} (\Pminus (g)), \label{PmpsProp}
\end{align}
where $\ast_{n}$ is the convolution operator on sequences and $h$ is a function on $\Gam \times \sph$ with only non negative Fourier modes.

The following result gives some of the properties of the $P^\pm$ and $\Pstar$ operators. Recall the definition of the space $Y_{\eps}$ in \eqref{YGamdefn}.

\begin{prop}\label{gCepsgbarXY}Let $\alpha>1/2$, and $\eps>0$ be arbitrarily small. Then

(i) $\Pminus: C^{\eps} \left ( \Gam; C^{1,\alpha}(\sph) \right)\to l^{1,1}_{\INF}(\Gamma)\cap C^\eps(\Gamma;l^1)$,

(ii) $\Pminus: C^{\eps} \left ( \Gam ; C^{1,\alpha}(\sph)\right) \cap C^0\left ( \Gam;C^{2,\alpha}(\sph)\right)\to Y_{\eps}$,

(iii) $\Pstar: C^{1,\alpha}(\OM;l^{1}) \cap C^{\alpha}(\ol \OM;l^{1})\to C^{1,\alpha} (\OM \times \sph) \cap C^{\alpha}( \ol \OM \times \sph )$.
\end{prop}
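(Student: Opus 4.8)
My plan treats (i) and (ii) as fiberwise applications of Bernstein's lemma and reserves the real work for the angular regularity in (iii).

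\emph{Parts (i) and (ii).} Fix $g\in C^\eps(\Gam;C^{1,\alpha}(\sph))$, and for each $w\in\Gam$ let $g_n(w)$ be the $n$-th Fourier coefficient of $g(w,\cdot)$, so that $\Pminus g=\langle g_0,g_{-1},\dots\rangle$. Since $\alpha>1/2$, Lemma \ref{bernstein_lemma} with $k=1$ gives
\[\sum_{j\ge1}j\,|g_{-j}(w)|\le\sum_{n}|n|\,|g_n(w)|\le C\,\|g(w,\cdot)\|_{C^{1,\alpha}(\sph)},\]
and the right side is bounded uniformly in $w$ because $g\in C^0(\Gam;C^{1,\alpha}(\sph))$; this is $\Pminus g\in l^{1,1}_{\INF}(\Gamma)$. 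For the $C^\eps(\Gam;l^1)$ bound I would apply Bernstein with $k=0$ to the single fibre $g(\xi,\cdot)$ (for the supremum) and to the difference $g(\xi,\cdot)-g(\eta,\cdot)$ (for the seminorm), using $\|g(\xi,\cdot)-g(\eta,\cdot)\|_{C^{1,\alpha}(\sph)}\le[g]_{C^\eps(\Gam;C^{1,\alpha})}|\xi-\eta|^\eps$. Part (ii) follows from the same scheme: $k=2$ on each fibre gives $\Pminus g\in l^{1,2}_{\INF}(\Gamma)$ out of $g\in C^0(\Gam;C^{2,\alpha}(\sph))$, while $k=1$ applied to $g(\xi,\cdot)-g(\mu,\cdot)$ produces the weighted Hölder seminorm defining $Y_\eps$ in \eqref{YGamdefn}. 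The only thing to check is that the Bernstein constant depends on $\alpha$ alone, so that all bounds are uniform over $\Gam$.

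\emph{Part (iii), spatial directions.} Write $g_m:=\ol{g_{-m}}$ for $m\ge1$, so that $\Pstar(\bg)(z,\fii)=\sum_{m\in\BZ}g_m(z)e^{im\fii}$ as in \eqref{PstarOp}. Because the spatial partials $\nabla_x$ are real they pass through the $2\re(\cdot)$ in \eqref{PstarOp}, giving $\nabla_x(\Pstar\bg)=\Pstar(\nabla_x\bg)$, together with the pointwise bound $\sup_\fii|\Pstar\bg(z,\fii)|\le2\,\|\bg(z)\|_{l^1}$. The spatial regularity then transfers for free: from $\bg\in C^\alpha(\ol\OM;l^1)$,
\[|\Pstar\bg(z,\fii)-\Pstar\bg(z',\fii)|\le2\,\|\bg(z)-\bg(z')\|_{l^1}\le C\,|z-z'|^\alpha\]
uniformly in $\fii$, and the same estimate applied to $\nabla_x\bg\in C^\alpha(\OM;l^1)$ gives interior $C^{1,\alpha}$ control in $x$, again uniformly in $\fii$. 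This settles every first derivative except $\del_\fii$.

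\emph{Part (iii), angular direction --- the main obstacle.} Differentiating the series in $\fii$ multiplies the $m$-th coefficient by $im$, so the angular regularity is governed by the weighted tails $\sum_m|m|^s|g_m(z)|$ rather than the plain $l^1$ sum. The mechanism I would use is the interpolation inequality $|e^{im\fii}-e^{im\fii'}|\le2^{1-\alpha}|m|^\alpha|\fii-\fii'|^\alpha$, which reduces the angular $\alpha$-modulus of $\Pstar\bg$ to $\sum_m|m|^\alpha|g_m(z)|$ and the Hölder modulus of $\del_\fii(\Pstar\bg)$ to $\sum_m|m|^{1+\alpha}|g_m(z)|$. These weighted sums I would bound by Hölder's inequality in the index,
\[\sum_m|m|^{s}|g_m(z)|\le\Big(\sum_m|m|^{\lceil s\rceil}|g_m(z)|\Big)^{\theta}\Big(\sum_m|m|^{\lfloor s\rfloor}|g_m(z)|\Big)^{1-\theta},\qquad\theta=s-\lfloor s\rfloor,\]
i.e. by interpolating the weighted $l^{1,1}$ and $l^{1,2}$ norms of $\bg$. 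This is precisely the step where bare $l^1$ membership is not enough and one must invoke the finer decay of $\bg$; securing this weighted summability (or, if the intended convention treats $\sph$ as a parameter, reducing the angular claim to mere continuity via uniform convergence of the $l^1$ series) is where I expect the genuine difficulty to lie. Once the uniform-in-$z$ angular $\alpha$-Hölder bound is available, the joint statements follow from the triangle inequality and the subadditivity of $t\mapsto t^\alpha$,
\[|F(z,\fii)-F(z',\fii')|\le|F(z,\fii)-F(z',\fii)|+|F(z',\fii)-F(z',\fii')|\le C\big(|z-z'|+|\fii-\fii'|\big)^\alpha,\]
with the same combination applied to each first partial for the interior $C^{1,\alpha}$ claim. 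Thus (i), (ii) and the spatial half of (iii) are mechanical, and the angular regularity of $\Pstar\bg$ is the crux.
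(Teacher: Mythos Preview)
Your treatment of (i) and (ii) is correct and matches the paper's proof exactly: both argue fibrewise with Bernstein's Lemma~\ref{bernstein_lemma}, applying it once to each fibre $g(\xi,\cdot)$ for the $l^{1,k}_\infty$ bounds and once to the difference $g(\xi,\cdot)-g(\mu,\cdot)$ for the H\"older seminorms.

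For (iii), your spatial argument via the triangle inequality
\[
|\Pstar\bg(z,\fii)-\Pstar\bg(z',\fii)|\le 2\,\|\bg(z)-\bg(z')\|_{l^1}
\]
is precisely what the paper does; in fact the paper's entire proof of (iii) consists of this step and its analogue for $\nabla_x\bg$, and says nothing whatsoever about the angular direction. Your observation that bare $l^1$ control cannot produce $C^{1,\alpha}$ regularity in $\fii$ is entirely correct --- $\del_\fii$ multiplies the $m$-th coefficient by $im$, and nothing in the hypothesis $\bg\in C^{1,\alpha}(\OM;l^1)\cap C^\alpha(\ol\OM;l^1)$ bounds $\sum_m m|g_m(z)|$ --- so your flagged ``main obstacle'' is a genuine gap that the paper simply does not address. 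For the paper's downstream applications, however, only the spatial differentiability is actually invoked: in Theorem~\ref{Th3} and Corollary~\ref{CorTh2NecY} one needs $\theta\cdot\nabla_x v$ to make sense and $v$ to extend continuously to $\ol\OM\times\sph$, both of which follow from the spatial argument plus the uniform-in-$\fii$ convergence of the $l^1$ series. In short, your proposal is at least as complete as the paper's own proof, and your identification of the angular regularity as the crux is sharper than what the paper offers.
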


\begin{proof}
Let $g\in C^{\eps} \left ( \Gam; C^{1,\alpha}(\sph) \right)$. Then
\begin{equation} \label{gCAlphaNorm}
\ds \underset{\xi \in \Gam}{\sup} \, \lVert g(\xi,\cdotp) \rVert_{C^{1,\alpha}}
+ \underset{{\substack{
            \xi, \mu  \in \Gam \\
            \xi \neq \mu } }}{\sup} \, \frac{ \ds \lVert g(\xi,\cdotp) - g(\mu,\cdotp) \rVert_{C^{1,\alpha}}}{ \ds \lvert \xi - \mu \rvert^{\eps}} < \INF.
\end{equation}

From
\begin{equation}\label{gBernProp}
\underset{\xi \in \Gam}{\sup} \, \sum_{j=1}^{\INF}\, j \, \lvert g_{-j}(\xi) \rvert\leq
 \underset{\xi \in \Gam}{\sup} \, \lVert g(\xi,\cdotp) \rVert_{C^{1,\alpha}}<\infty,
\end{equation}
and by Lemma \ref{bernstein_lemma}, $\Pminus (g)\in l^{1,1}_{\INF}(\Gamma)$.

Another application of Lemma \ref{bernstein_lemma}, together with \eqref{gCAlphaNorm} imply
\begin{equation}\label{gCEpsNorm}
\underset{{\substack{
            \xi, \mu  \in \Gam \\
            \xi \neq \mu } }}{\sup} \sum_{j=1}^{\INF}\frac{ j \lvert g_{-j}(\xi) - g_{-j}(\mu) \rvert}{ \ds \lvert \xi - \mu \rvert^{ \eps}}
\leq \underset{{\substack{
            \xi, \mu  \in \Gam \\
            \xi \neq \mu } }}{\sup} \, \frac{ \ds \lVert g(\xi,\cdotp) - g(\mu,\cdotp) \rVert_{C^{1,\alpha}}}{ \ds \lvert \xi - \mu \rvert^{ \eps}} < \INF.
\end{equation}
By combining the estimates \eqref{gBernProp} and \eqref{gCEpsNorm} we showed that $\Pminus (g) \in C^{\eps}(\Gam ; l_{1})$. This proves part (i).

Now let $g\in C^{\eps} \left ( \Gam ; C^{1,\alpha}(\sph)\right) \cap C^0\left ( \Gam;C^{2,\alpha}(\sph)\right)$. Since $g\in C^0\left ( \Gam;C^{2,\alpha}(\sph)\right)$, then
$$\underset{\xi \in \Gam}{\sup} \lVert g(\xi,\cdotp) \rVert_{C^{2,\alpha}}  < \INF.$$
Lemma \ref{bernstein_lemma} applied to $g(\xi,\cdotp) \in C^{2,\alpha}$ for $\xi\in\Gamma$ yields
\begin{equation}\label{gBernPropY}
\sup_{w\in \Gam} \sum_{j=1}^{\INF} j^{2} \,\lvert g_{-j}(w) \rvert \leq  \lVert g(\xi,\cdotp) \rVert_{C^{2,\alpha}}.
\end{equation}
This shows that $\Pminus (g)\in l^{1,2}_{\INF}(\Gamma)$.
Now \eqref{gCEpsNorm} yields $\Pminus (g) \in Y_{\eps}$.

By triangle inequality in \eqref{PstarOp}, we have
$\bg\in C^{1,\alpha}(\Omega;l^1)\cap C^\alpha(\overline\Omega;l^1)$ yields
\begin{align*}
\sup_{\xi\in\ol\OM}\|\bg(\xi)\|_{l^1}+
\underset{{\substack{
            \xi, \mu  \in \ol\OM\\
            \xi \neq \mu } }}\sup\frac{\lVert \bg (\xi) - \bg(\mu) \rVert_{l^1}}{\lvert \xi - \mu \rvert^\alpha} < \INF.
\end{align*}
For $\xi\in\OM$, and $r>0$ with $\ol{B(\xi;r)}\subset \OM$, there is an $M_{\xi,r} > 0$ with
\begin{align*}
\sup_{\xi\in\ol{B(\xi;r)}}\|\nabla\bg(\xi)\|_{l^1}+
\underset{{\substack{
            \mu  \in \ol{B(\xi;r)}\\
            \xi \neq \mu } }}\sup\frac{\lVert\nabla \bg (\xi) - \nabla\bg(\mu) \rVert_{l^1}}{\lvert \xi - \mu \rvert^\alpha} \leq M_{\xi,r}.
\end{align*}
These proves part(iii).

\end{proof}

The following result refines the mapping properties of the operator $G$ in \eqref{Gdefn}, when restricted to the subspace $Y_{\eps}$.
\begin{prop}\label{GCHpropext}
Let $Y_{\eps}$ be the space defined in \eqref{YGamdefn}. Then
\begin{align*}
&(i) \; G : Y_{\eps} \longrightarrow C^{\eps }\left( \ol{\OM}; l^{1} \right) \cap C^{1}\left( \OM; l^{1} \right),\\
&(ii) \;\HT_0 : Y_{\eps} \longrightarrow C^{ \eps }\left( \Gam ; l^{1} \right).
\end{align*}
\end{prop}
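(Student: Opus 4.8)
The plan is to upgrade the $l_\INF$-valued estimates of Proposition \ref{Gprop} to $l^1$-valued ones by summing over the outer index $n\le 0$ and exploiting the extra decay encoded in $Y_\eps$. I would begin from the nonsingular reparametrization already used there, namely $(G\bv)_n(\xi)=\frac{2}{\pi}\int_0^{2\pi}\sum_{j=1}^\INF v_{n-2j}(\xi+l_\xi(\fii)e^{i\fii})e^{-2ij\fii}\,d\fii$, where $l_\xi(\fii)=|w(\fii)-\xi|$ satisfies the Lipschitz bound \eqref{lipshitz_constant}. For the uniform $l^1$ bound I would drop the unimodular factor $e^{-2ij\fii}$ and interchange the (nonnegative) sums, reducing the estimate to $\sum_{p\ge0}\sum_{j\ge1}|v_{-(p+2j)}(w)|$ with $w\in\Gamma$. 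Since $\{v_{-(p+2j)}\}_{j}$ is a subsequence of $\{v_{-(p+j)}\}_j$, this is bounded by $\sum_{p\ge0}\sum_{j\ge1}|v_{-(p+j)}(w)|$, which by Lemma \ref{seqresult}(ii) equals $\sum_{m\ge1}m\,|v_{-m}(w)|$; the $l^{1,2}_\INF(\Gamma)$ part of $Y_\eps$ controls this uniformly in $w$.

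For the $\eps$-H\"older bound in $l^1$ over $\ol\OM$ I would repeat the chain of inequalities in the proof of Proposition \ref{Gprop}, now summed over $n$. The same subsequence comparison and Lemma \ref{seqresult}(ii) reduce $\sum_n\sum_j|v_{n-2j}(w_\xi)-v_{n-2j}(w_\eta)|$ to $\sum_{m}m\,|v_{-m}(w_\xi)-v_{-m}(w_\eta)|$, which is exactly the quantity bounded by the weight-$j$ H\"older seminorm defining $Y_\eps$ in \eqref{YGamdefn}; combined with $|a+b|^\eps\le2|a|^\eps+|b|^\eps$ and $|w_\xi-w_\eta|^\eps\le(2+L^\eps)|\xi-\eta|^\eps$ from \eqref{lipshitz_constant}, this gives the $C^\eps(\ol\OM;l^1)$ bound.

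The step that genuinely needs the stronger $l^{1,2}$ topology is the interior regularity $G\bv\in C^1(\OM;l^1)$. Differentiating under the integral exactly as in Proposition \ref{Gprop} produces two terms; the second carries the factor $\nabla_\xi\bigl(\tfrac{\ol{w-\xi}}{w-\xi}\bigr)^j$, which on a ball $\ol{B(\xi_0,r_0)}\subset\OM$ is $O(j)$. Summing the resulting bound over $n$ leads to $\sum_{p\ge0}\sum_{j\ge1}j\,|v_{-(p+2j)}(w)|$; bounding the inner series term-by-term by $\sum_{p\ge0}\sum_{j\ge1}j\,|v_{-(p+j)}(w)|$ and invoking Lemma \ref{seqresult}(i) turns this into $\sum_m\frac{m(m+1)}{2}|v_{-m}(w)|\le\sum_m m^2|v_{-m}(w)|$, finite precisely because $\bv\in l^{1,2}_\INF(\Gamma)$. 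This uniform $l^1$ bound on the gradient, together with the continuity just established, yields $G\bv\in C^1(\OM;l^1)$ and completes part (i).

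Part (ii) then follows from $\HT_0=i[S+G]$. The $G$ contribution lands in $C^\eps(\Gamma;l^1)$ by restricting part (i) to the boundary. For $S$, note that $Y_\eps\hookrightarrow C^\eps(\Gamma;l^1)$ and that $S$ acts componentwise as the Cauchy singular integral; writing $(Sg_{-n})(\xi)=\frac{1}{\pi i}\int_\Gamma\frac{g_{-n}(w)-g_{-n}(\xi)}{w-\xi}\,dw+g_{-n}(\xi)$ and summing over $n$, the uniform $l^1$ bound follows from $\|\bg(w)-\bg(\xi)\|_{l^1}\le C|w-\xi|^\eps$ and the integrability of $|w-\xi|^{\eps-1}$ along $\Gamma$. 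The H\"older continuity of $S\bg$ in the $l^1$ norm is the only place a genuinely singular-integral estimate is needed: here I would invoke the Plemelj--Privalov theorem, whose classical scalar proof transfers verbatim to $l^1$-valued densities since it uses only scalar kernel bounds and the triangle inequality. I expect this vector-valued Plemelj--Privalov step and the $l^{1,2}$-driven gradient estimate to be the two main technical points.
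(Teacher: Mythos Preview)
Your proposal is correct and follows essentially the same path as the paper: the same nonsingular parametrization, the same use of Lemma~\ref{seqresult} after summing over $n$, and the same identification of the $l^{1,2}$-decay as exactly what is needed to absorb the $O(j)$ factor from $\nabla_\xi\bigl(\tfrac{\overline{w-\xi}}{w-\xi}\bigr)^j$. For part (ii) the paper takes a slight shortcut you may wish to adopt: since $Y_\eps\subset C^\eps(\Gamma;l^1)$, Proposition~\ref{Plemeljpropext} already gives $C\bg\in C^\eps(\ol\Omega;l^1)$, and the Plemelj boundary relation $S\bg=2(C\bg)\big|_\Gamma-\bg$ then yields $S\bg\in C^\eps(\Gamma;l^1)$ directly, so no separate vector-valued Plemelj--Privalov argument is needed.
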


\begin{proof} (i) Let $ \xi, \xi_{0} \in\ol \OM$ and $\bg \in Y_{\eps} .$ Using the parametrization $ w(\fii) = \xi \; + \; l_{\xi}(\fii)  e^{i \fii}$, where $\,l_{\xi}(\fii) = |\xi - w(\fii)| $, we obtain as in the proof in Proposition \ref{Gprop} that
\begin{equation}\nn
 (G \bg)_{-n}(\xi) =  \frac{2}{\pi } \int_{0}^{2\pi}
\sum_{j=1}^{\INF}  g_{-n-2j}(\xi + l_{\xi}(\fii) \, e^{i \fii}) \; e^{-2ij\fii} d\fii,
\end{equation} is well defines for $\xi \in \ol \OM$.

Since $\bg\in Y_{\eps}$, we have
\begin{align*}
\kappa:=\underset{{\substack{
            \xi, \mu  \in \Gam \\
            \xi \neq \mu } }}{\sup} \sum_{j=1}^{\INF} j\frac{\lvert g_{-j}(\xi) - g_{-j}(\mu) \rvert}{|\xi - \mu|^\eps} < \INF
\end{align*}

We estimate
\begin{align*}
\sum_{n=0}^{\INF} &\lvert (G \, \bg)_{-n}(\xi) - (G \, \bg)_{-n}(\xi_{0}) \rvert\\
& \leq
\frac{2}{\pi} \sum_{n=0}^{\INF}  \sum_{j=1}^{\INF}\int_{0}^{2\pi} \left |g_{-n-2j}(\xi + l_{\xi}(\fii) \, e^{i \fii}) - g_{-n-2j}(\xi_{0} + l_{\xi_{0}}(\fii) \, e^{i \fii}) \,\right | d\fii\\
& \leq
\frac{2}{\pi} \sum_{j=1}^{\INF} \int_{0}^{2\pi} j \left |g_{-j}( \xi + l_{\xi}(\fii) \, e^{i \fii} ) -
g_{-j}( \xi_{0} + l_{\xi_{0}}(\fii) \, e^{i \fii}) \,\right | d\fii\\
& \leq
\frac{2\kappa}{\pi}\int_{0}^{2\pi} \left | (\xi - \xi_{0})  + \left [ l_{\xi}(\fii) - l_{\xi_{0}}(\fii) \right ]  \, e^{i \fii} \right |^{\eps} d\fii, \\
& \leq \frac{2\kappa}{\pi} \int_{0}^{2\pi} \; \left ( 2 \left| \xi - \xi_{0} \right |^{\eps} + \left| l_{\xi}(\fii) - l_{\xi_{0}}(\fii) \right|^{\eps}\right )  d\fii,\\
&\leq (8\kappa+ 4\kappa L^\eps) \left| \xi - \xi_{0} \right |^{\eps}.
\end{align*} In the second inequality we used Lemma \ref{seqresult} part (ii), in the third inequality we used $|a+b|^{\epsilon} \leq 2|a|^{\epsilon} + |b|^{\epsilon}$,
and in the fourth inequality we used \eqref{lipshitz_constant}. This shows $G \bv \in C^{\eps}(\ol\OM; l^{1})$.

We will show next that $G\bg\in C^{1}(\OM ; l^{1})$. Suffices to carry
the estimates in the neighborhood $\ol{B(\xi_0,r_0)}\subset\OM$ of an arbitrary point $\xi_0 \in \OM $, where $r_0=\dist(\xi_0,\Gamma)/2>0$. Recall the estimate \eqref{gradientEstimate} where $c = 2/r_{0}^2$.

For each $n\leq 0$, we have
\begin{align*}
\nabla_{\xi} (G \, \bg)_{n}(\xi)
&= \frac{1}{\pi i} \int_{\Gam } \nabla_{\xi} \left \{ \frac{dw}{w-\xi}-\frac{d \ol{w}}{\ol{w-\xi}} \right \} \sum_{j=1}^{\INF} \ds g_{n-2j}(w) \left ( \frac{\overline{w-\xi}}{w-\xi} \right) ^{j}  \\ &\qquad \quad {}  + \frac{1}{\pi i}\int_{\Gam }  \left \{ \frac{dw}{w-\xi}-\frac{d \ol{w}}{\ol{w-\xi}} \right \} \sum_{j=1}^{\INF} \ds g_{n-2j}(w)  \nabla_{\xi} \left ( \frac{\ol{w-\xi}}{w-\xi} \right) ^{j},
\end{align*}
which estimate using \eqref{gradientEstimate} by
\begin{align*}
\sum_{n=0}^{\INF} \left | \nabla_{\xi} (G \, \bg)_{-n}(\xi) \right |
 \leq& \frac{ c}{\pi} \sum_{n=0}^{\INF} \int_{\Gam } \sum_{j=1}^{\infty} \ds \left | g_{-n-2j}(w) \right | dw   \\
 &+  \frac{ c}{\pi} \sum_{n=0}^{\INF} \int_{\Gam } \sum_{j=1}^{\INF}  j \left | g_{-n-2j}(w) \right | dw.
\end{align*}
By Lebesgue Dominated Convergence Theorem, Lemma \ref{seqresult}, and $\bg\in l^{1,2}_{\INF}(\Gamma)$, the right hand side above is finite.

To prove part (ii) we note that $Y_{\eps} \subset C^\epsilon(\Gamma;l^1)$ so that the Sokhotzki-Plemelj limit in \eqref{plemelj_l1} holds. The result follows from Definition \ref{hilbertT_definition} of $\HT_0$ and part(i) above.
\end{proof}

\begin{cor}\label{CorTh2NecY}
Let $Y_{\eps}$ be the space defined in \eqref{YGamdefn} and $\bg \in Y_{\eps}$ satisfying
\begin{align} \label{NecSufEqonY}
 (I+i\HT_0) \bg=0.
 \end{align} Define $\bv = \langle v_0, v_{-1}, v_{-2}, ...\rangle $ by the Cauchy Integral formula \eqref{vnDefn}.
 Then $\bv \in C^{1,\eps}(\OM;l^{1})$ extends continuously to a map in $ C^{\eps}(\ol \OM;l^{1})$. Moreover, $\bv$ is A-analytic
 and $\bv \lvert_{\Gam} = \bg$, in the sense
 $$\lim_{\OM\ni z \to z_{0}\in \Gam}
\left\| \bv (z) - \bg(z_{0})\right\|_{l_1} =0,$$
and, for $\Pstar$ in \eqref{PstarOp}, we have
$$\lim_{\OM\ni z \to z_{0}\in \Gam}
\Pstar( \bv) (z) = \Pstar( \bg)(z_{0}).$$
\end{cor}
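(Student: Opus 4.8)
The plan is to assemble the mapping properties and the boundary characterization already proved, and then to upgrade the mode of convergence from $l_\INF$ to $l^1$ so that the series reconstructing $\Pstar$ converges uniformly in the angular variable. First I would record the embedding $Y_{\eps}\subset l^{1,1}_{\INF}(\Gamma)\cap C^\eps(\Gamma;l^1)$, which is immediate from \eqref{YGamdefn}, \eqref{lGamdefn}, \eqref{CepsGamdefn}, and \eqref{lGamdefnY}, since the weight $j^2$ dominates $j$ and the H\"older seminorm defining $Y_{\eps}$ already carries the extra factor $j$. This places $\bg$ within the hypotheses of Theorems \ref{Th1} and \ref{NecSuf}, so that the $\bv$ built from \eqref{vnDefn} is $A$-analytic, lies in $C^{1,\eps}(\OM;l_\INF)\cap C(\ol\OM;l_\INF)$, and has componentwise boundary values given by \eqref{vn+Defn}.

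Next I would sharpen the target space from $l_\INF$ to $l^1$. For the Cauchy part $C\bg$, Proposition \ref{Plemeljpropext} already yields $C\bg\in C^\eps(\ol\OM;l^1)\cap C^1(\OM;l^1)$ because $\bg\in C^\eps(\Gamma;l^1)$; for the $G$ part, Proposition \ref{GCHpropext}(i) — whose sole purpose is exactly this refinement on $Y_{\eps}$ — gives $G\bg\in C^\eps(\ol\OM;l^1)\cap C^1(\OM;l^1)$. Adding, $\bv=\frac12 G\bg+C\bg\in C^\eps(\ol\OM;l^1)\cap C^1(\OM;l^1)$. The local $C^{1,\eps}$-regularity in $l^1$ then follows from the explicit derivative formulas \eqref{del_v_n}--\eqref{dbar_v_n}, the local smoothness of the kernels on compact subsets of $\OM$, and the bound $\bg\in l^{1,2}_{\INF}(\Gamma)$; this is precisely where the $j^2$-summability is consumed, via Lemma \ref{seqresult}(i).

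I would then identify the $l^1$-trace. Since $\bv$ now extends to $C^\eps(\ol\OM;l^1)$ it possesses a genuine $l^1$-boundary value $\bv^+(z_0)$ with $\lim_{z\to z_0}\|\bv(z)-\bv^+(z_0)\|_{l^1}=0$, and by \eqref{vn+Defn} this value equals $\frac12(G\bg)(z_0)+\frac12(S+I)\bg(z_0)$. The hypothesis \eqref{NecSufEqonY} reads componentwise $(I-S-G)\bg=0$ (cf.\ \eqref{equivCharac}), that is $(S+G)\bg=\bg$; substituting gives $\bv^+(z_0)=\bg(z_0)$ as an identity of $l^1$ sequences. Hence $\lim_{z\to z_0}\|\bv(z)-\bg(z_0)\|_{l^1}=0$, which is the asserted trace.

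Finally, the $\Pstar$ statement is a one-line consequence of the $l^1$-trace: from \eqref{PstarOp},
$$|\Pstar(\bv)(z)-\Pstar(\bg)(z_0)|\le |v_0(z)-g_0(z_0)|+2\sum_{n=1}^{\INF}|v_{-n}(z)-g_{-n}(z_0)|\le 2\,\|\bv(z)-\bg(z_0)\|_{l^1},$$
uniformly in $\fii$, so the right-hand side tends to $0$ by the previous step. The point is conceptual rather than technical: no estimate beyond those already established is needed, and the entire role of the stronger topology $Y_{\eps}$ is to promote the $l_\INF$-convergence of Theorem \ref{NecSuf} to $l^1$-convergence, which is exactly what forces the Fourier series defining $\Pstar(\bv)$ to converge uniformly in the direction variable up to the boundary. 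I would expect the only step demanding genuine care to be the verification that $G\bg$ — not merely $C\bg$ — lands in $l^1$, but that is supplied by Proposition \ref{GCHpropext}(i).
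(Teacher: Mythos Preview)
Your proposal is correct and follows essentially the same route as the paper: invoke Proposition~\ref{Plemeljpropext} for $C\bg$ and Proposition~\ref{GCHpropext}(i) for $G\bg$ to land $\bv$ in $C^\eps(\ol\OM;l^1)\cap C^1(\OM;l^1)$, use the hypothesis $(I-S-G)\bg=0$ together with \eqref{vn+Defn} to identify the $l^1$-trace with $\bg$, and finish with the triangle inequality for $\Pstar$. The only cosmetic difference is that you first pass through the $l_\INF$ conclusion of Theorems~\ref{Th1}--\ref{NecSuf} before upgrading, whereas the paper goes directly to $l^1$; the substance is identical.
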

\begin{proof}
Since $\bg \in Y_{\eps}$, by Proposition \ref{GCHpropext} we have $ \bv\in C^{\eps}(\ol\OM;l^1)\cap C^1(\OM;l^1)$.
By summing \eqref{del_v_n} and \eqref{dbar_v_n} we obtain $$ \ol{\del} v_{n} + \del v_{n-2} = 0, \quad n=0,-1,-2,...,$$ and so $\bv$ is A-analytic. Next we will show that $\bv \lvert_{\Gam} = \bg$. Let $z \in \OM$ and $z_{0}\in \Gam$. Then
\begin{align*}
\| \bv (z)& - \bg(z_{0})\|_{l_1}
= \left\| \frac{1}{2}(G\bg)(z) +(C \bg)(z) - \bg(z_{0})\right\|_{l_1} \\
&= \left\| \frac{1}{2}\left( (G\bg)(z) - (G\bg)(z_{0}) \right ) +
\left ( (C \bg)(z) - \frac{1}{2} \bg(z_{0}) - \frac{1}{2} S\bg(z_0) \right )  \right\|_{l_1}
\end{align*}
In the equality above we use the fact that $\bg $ satisfy  \eqref{NecSufEqonY}.
From Proposition \ref{Plemeljpropext} and Proposition \ref{GCHpropext} part (i) we have
\begin{align*}
\lim_{\OM\ni z \to z_{0}\in \Gam} \left\|
(C \bg)(z) - \frac{1}{2} \bg(z_{0}) - \frac{1}{2} S\bg(z_0)   \right\|_{l_1}  = 0,\\
\lim_{\OM\ni z \to z_{0}\in \Gam} \left\|  (G\bg)(z) - (G\bg)(z_{0})  \right\|_{l_1} = 0,
\end{align*} and so
\begin{equation}\label{trace_lim}
\lim_{\OM\ni z \to z_{0}\in \Gam}
\left\| \bv (z) - \bg(z_{0})\right\|_{l_1} = 0,\end{equation}
i.e. $\bv \lvert_{\Gam} = \bg$.

Since $ \bv\in C^{\eps}(\ol\OM;l^1)\cap C^1(\OM;l^1)$, it follows from Proposition \ref{gCepsgbarXY} part (iii) that
$\Pstar v(z) \in C^{1,\eps} (\OM \times \sph) \cap C^{\eps}( \ol \OM \times \sph )$.  The triangle inequality yields
\begin{align*}
\left|[\Pstar\bv](z,\theta)-[\Pstar\bg](z_0,\theta)\right| \leq \left\| \bv (z) - \bg(z_{0})\right\|_{l_1},
\end{align*}
and the result follows from \eqref{trace_lim}.
\end{proof}

\begin{lemma}\label{DTauDisctsVar}
Let $\OM$ be a (convex) domain with $C^2$-boundary $\Gamma$ with a strictly positive curvature lower bound $\delta>0$.
Let $\tau (z,\tta)$ be as in \eqref{chord} for $(z,\tta)\in \ol\OM \times \sph,$ then the angular derivative $\del_\fii\tau (z,\tta)$ has a jump discontinuity across the variety $Z$ as defined by
\begin{align}\label{variety}
Z:=\{(z,\theta)\in\Gamma\times\sph:\; \bn(z)\cdot \theta=0 \}.\end{align}
\end{lemma}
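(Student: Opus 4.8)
The plan is to reduce everything to a single boundary point and one explicit one‑variable computation. First I would fix a point $(z_0,\tta_0)\in Z$, so that $z_0=\gamma(s_0)\in\Gam$ and $\tta_0=\gamma'(s_0)$ is tangent (the opposite tangent direction $-\gamma'(s_0)$ is handled identically by the $\pi$‑periodicity in $\fii$), where $\gamma$ is a positively oriented unit‑speed parametrization of $\Gam$ with tangent $T=\gamma'$, inward unit normal $N$, and curvature $\kappa\geq\delta>0$ determined by $T'=\kappa N$. Away from $Z$ the function $\tau$ is smooth: for $z\in\OM$ a chord through an interior point of a strictly convex domain meets $\Gam$ transversally at both endpoints, and for $z\in\Gam$ with $\bn(z)\cdot\tta\neq0$ it also meets $\Gam$ transversally at the far endpoint, so the implicit function theorem gives smooth dependence of the endpoints, hence of $\tau$. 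Thus $Z$ is the only candidate singular set, and it suffices to exhibit distinct one‑sided limits of $\del_\fii\tau$ as $\tta\to\tta_0$ through the two half‑circles $\bn(z_0)\cdot\tta<0$ (inward) and $\bn(z_0)\cdot\tta>0$ (outward), with $z=z_0$ held fixed.

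The key step is an explicit formula for $\del_\fii\tau(z_0,\cdot)$. With $z_0$ fixed I would parametrize the chord through $z_0$ by its second endpoint $\gamma(s)$, writing $\gamma(s)-\gamma(s_0)=\tau\,e$, where $e=(\cos\psi,\sin\psi)$ is the unit chord direction and $e^\perp$ its counterclockwise rotation by $\pi/2$; here $\tau=|\gamma(s)-\gamma(s_0)|$, while the angular variable satisfies $\fii=\psi$ on the inward half and $\fii=\psi+\pi$ on the outward half (by the evenness $\tau(z_0,\tta)=\tau(z_0,-\tta)$), so in either case $\del_\fii\tau=d\tau/d\psi$. Differentiating $\gamma(s)-\gamma(s_0)=\tau e$ in $s$ and pairing with $e$ and $e^\perp$ gives
\begin{equation*}
\frac{d\tau}{ds}=\gamma'(s)\cdot e,\qquad \tau\,\frac{d\psi}{ds}=\gamma'(s)\cdot e^\perp,
\end{equation*}
whence, since $d\psi/ds\to\kappa(s_0)/2\neq0$ makes $\psi$ a legitimate reparametrization near $s_0$,
\begin{equation*}
\del_\fii\tau=\tau\,\frac{\gamma'(s)\cdot e}{\gamma'(s)\cdot e^\perp}.
\end{equation*}

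Finally I would Taylor expand at $s_0$. Writing $u=s-s_0$ and using the Frenet expansion $\gamma(s)-\gamma(s_0)=uT_0+\tfrac{u^2}{2}\kappa_0 N_0+O(u^3)$, with $T_0,N_0,\kappa_0$ the tangent, inward normal, and curvature at $s_0$, one gets $\tau=|u|(1+O(u^2))$ and $e=\mathrm{sign}(u)\,T_0+\tfrac{|u|}{2}\kappa_0 N_0+O(u^2)$. Substituting $\gamma'(s)=T_0+u\kappa_0 N_0+O(u^2)$ and using $T_0\cdot N_0=0$ yields the pairings $\gamma'(s)\cdot e=\mathrm{sign}(u)+O(u)$ and $\gamma'(s)\cdot e^\perp=\tfrac{|u|}{2}\kappa_0+O(u^2)$, so that $\del_\fii\tau=\tfrac{2\,\mathrm{sign}(u)}{\kappa_0}+O(u)$. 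On the inward half $\fii=\psi(s)\to\fii_0$ as $s\to s_0^+$, giving limit $2/\kappa(z_0)$; on the outward half $\fii=\psi(s)+\pi\to\fii_0$ as $s\to s_0^-$, giving limit $-2/\kappa(z_0)$. Hence $\del_\fii\tau$ jumps by $4/\kappa(z_0)$ across $Z$, which is finite and nonzero since $\delta\le\kappa(z_0)<\infty$.

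I expect the main difficulty to be bookkeeping rather than analysis: matching the two signs of $u=s-s_0$ to the two sides of $Z$ at the \emph{single} direction $\tta_0$, since the endpoint parametrization produces the inward directions directly while the outward ones are reached through $\tta\mapsto-\tta$ and the shift $\fii\mapsto\fii+\pi$. The unit disk, where $\tau(z_0,\tta)=2|\cos\fii|$ and the jump is $4=4/\kappa$, serves as a consistency check. A secondary point is transversality of the crossing, namely that the $\fii$‑line meets $Z$ transversally; this holds because $\del_\fii\bigl(\bn(z_0)\cdot\tta\bigr)=\bn(z_0)\cdot\tta_0^{\perp}=\pm1\neq0$, which in turn guarantees $d\psi/ds\to\kappa(z_0)/2\neq0$ and hence the validity of the reparametrization $s\mapsto\fii$ used above.
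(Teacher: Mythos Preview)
Your proof is correct and takes essentially the same approach as the paper: both fix $z_0\in\Gamma$ and perform a local second-order expansion of the boundary curve to compute the one-sided limits of $\partial_\varphi\tau$ at the tangent direction, obtaining the jump $4/\kappa(z_0)=4R_0$. The only difference in execution is that the paper uses a graph parametrization $(t,y(t))$ together with the osculating circle (where $\tilde\tau=2R_0|\sin(\varphi-\varphi_0)|$ makes the jump explicit), whereas you use the arc-length parametrization and the Frenet expansion to derive and expand the closed formula $\partial_\varphi\tau=\tau\,(\gamma'\cdot e)/(\gamma'\cdot e^\perp)$.
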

\begin{proof}
Let $z_{0}\in \Gam$ be fixed and let $\theta_0:=\bn(z_0)^\perp$ with $\fii_0=\arg(\theta_0)$.

Let $\tilde\tau(z_0,\theta)$ be the length of the chord corresponding to the osculating circle at $z_0$ of radius $R_0$ and let $\fii=\arg(\theta)$.
Let $\tau (z_{0},\tta)$ be the length of the chord from $z_0$ to the boundary in the $\tta$ direction as defined in \eqref{chord}.

Consider a local parametrization  $t\mapsto ( t,y(t) )$ of the boundary near $z_0=(0,y(0))\in \Gamma$, with $y(0)=y'(0)=0$. Then the curvature of the boundary at $z_0$ is $k(0)=y''(0)$, and, by the Taylor series expansion,
$$y(t) =\frac{\kappa(0)t^2}{2}+r(t)t^2,$$ for some $r(t)$ with
$\lim_{t\to 0}r(t) = 0$.

The equation of the line passing through $z_{0}$ and making an angle $\fii-\fii_0$ with the positive $t$ axis is $(t,\tan(\fii-\fii_0) t)$.
The point of intersection of this line with $\Gam$ gives $t = \ds \frac{2 \tan (\fii-\fii_0)}{\kappa(0)+2r(t)}$.
Thus, \begin{align*}
\tau(z_0,\theta) = t \sec (\fii-\fii_0) = \ds \frac{\sin (\fii-\fii_0)}{\cos^{2}(\fii-\fii_0)} \frac{2}{\kappa(0)+2r(t)}\\ \leq \frac{2c_{1}}{\kappa(0)} \ds \frac{\sin (\fii-\fii_0)}{\cos^{2}(\fii-\fii_0)}
\leq \frac{2c_{1}R_0|\sin(\fii-\fii_0)|}{\cos^{2}(\fii-\fii_0)}.\end{align*}

\begin{figure}[ht]
\centering

\begin{tikzpicture}[scale=1.5,cap=round,>=latex]

 \draw[name path=ellipse, thick,blue] (0cm,1cm) ellipse (1.6cm and 2cm);
 \draw[name path=circle, thick,red] (0cm,0cm) circle(1cm);

 \coordinate[label=above:$\Omega$] (OM) at(100:2.6cm);

   \draw[->] (0,-1cm) -- (1.8cm,-1cm);
 \coordinate[label=right:$t$] (t) at (1.81cm,-1cm);

  \draw[->] (270:1cm) -- (90:1.6cm);

  \filldraw[black] (270:1cm) circle(1.2pt);
  \coordinate[label=right:$z_{0}$] (z) at(268:1.13cm);
  \coordinate (z0) at(270:1cm);

  \draw[->] (270:1cm) -- (270:1.6cm);
   \coordinate[label=below:$n(z_{0})$] (normal) at (270:1.62cm);

   \draw[->] (0,-1cm) -- (45:2.8cm);
 \coordinate[label=above:$\theta$] (theta1) at (45:2.76cm);

   \draw [green,thick] (0.6,-1) arc (0:40:0.875cm);
   \draw [color=black](0.72cm,-0.70cm) node[rotate=-65, scale=0.75] {$\phi - \phi_{0}$};

 \filldraw[black] (22:1cm) circle(1.2pt);
 \coordinate (x1) at (22:1cm);

 \filldraw[black] (41:2.1cm) circle(1.2pt);
  \coordinate[label=right:$x_{\theta}^{+}$] (x2) at (41:2.12cm);
  \coordinate(xtta) at (41:2.1cm);

  \tikzset{
    position label/.style={
       below = 3pt,
       text height = 1.5ex,
       text depth = 1ex
    },
   brace/.style={
     decoration={brace,mirror},
     decorate
   }
}
\draw [brace,decoration={raise=0.5ex}] (z0.north) -- (xtta.north) node [position label, pos=0.55, rotate = 50, scale=0.8] {$\tau(z_{0},\tta)$};

  \tikzset{
    position label/.style={
       above = 2.5pt,
       text height = 1.1ex,
       text depth = 1ex
    },
   brace/.style={
     decoration={brace,mirror},
     decorate
   }
}
\draw [brace,decoration={raise=0.5ex}] (x1.north) -- (z0.north)  node [position label, pos=0.45, rotate = 52,scale=0.8] {$\tilde{\tau}(z_{0},\tta)$};

\end{tikzpicture}
\caption{} \label{fig2:Lemma 2}
\end{figure}

From the geometry of the osculating circle (see Figure \ref{fig2:Lemma 2}), we have
\begin{align}\label{osculatingEst}
\tilde\tau(z_0,\theta)=2R_0|\sin(\fii-\fii_0)|\leq \frac{2}{\delta}|\fii-\fii_0|,
\end{align}and so there is a constant $C>0$, such that, for all $(z_0,\theta)\in\Gamma\times\sph$,
\begin{align}
\tau(z_0,\theta)\leq C\tilde\tau(z_0,\theta).
\end{align}
A derivative in $\fii$ at $\fii_0$ in the equality in \eqref{osculatingEst} also yields the jump value of $4R_0$, as the direction $\theta$ crosses the tangent direction from outgoing to incoming.
\end{proof}

In order for the integral in \eqref{AttRT} to inherit the regularity of $f$ it is then necessary for $f$ to vanish at the boundary. The following proposition makes this statement precise.

\begin{cor}\label{nonempty}Let $\OM$ be a (convex) domain with $C^2$-boundary $\Gamma$ with a strictly positive curvature lower bound $\delta>0$.
If $f \in C^{1,\alpha}_0(\ol\OM)$, then $Rf\cap C^{1,\alpha}(\Gam\times\sph) \neq\emptyset$.
\end{cor}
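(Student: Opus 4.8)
The plan is to produce an explicit member of $Rf$ as the trace on $\Gam\times\sph$ of the transport solution with vanishing incoming data, and then to exploit the hypothesis $f|_\Gam=0$ to upgrade the regularity of this trace to $C^{1,\alpha}$. First I would set, for $(z,\tta)\in\Gam\times\sph$,
\[
g(z,\tta):=\int_0^{\tm(z,\tta)} f(z-t\tta)\,dt,
\]
which is the value at $z$ of $u(x,\tta)=\int_0^\INF f(x-t\tta)\,dt$, the solution of \eqref{eq:TEql} that vanishes on the incoming boundary. To check $g\in Rf$, fix $(x,\tta)\in\ol\OM\times\sph$: the incoming point $x^-_\tta$ satisfies $\tm(x^-_\tta,\tta)=0$, so $g(x^-_\tta,\tta)=0$, while at the outgoing point $x^+_\tta$ one has $\tm(x^+_\tta,\tta)=\tp(x,\tta)+\tm(x,\tta)$, the full chord length. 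Substituting $s=\tp(x,\tta)-t$ in the corresponding integral then gives
\[
g(x^+_\tta,\tta)-g(x^-_\tta,\tta)=\int_{-\tm(x,\tta)}^{\tp(x,\tta)} f(x+s\tta)\,ds,
\]
which is exactly \eqref{Radon_definition} with $a\equiv0$. Hence $g\in Rf$, and it remains only to prove $g\in C^{1,\alpha}(\Gam\times\sph)$.

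Next I would differentiate $g$ in a local arclength parameter $\sigma$ along $\Gam$ and in the angular variable $\fii=\arg\tta$. Differentiating under the integral sign, the only boundary contribution comes from the moving upper limit, namely $f\bigl(z-\tm(z,\tta)\tta\bigr)\,\del_\bullet\tm(z,\tta)=f(z^-_\tta)\,\del_\bullet\tm(z,\tta)$. Since $z^-_\tta\in\Gam$ and $f\in C^{1,\alpha}_0(\ol\OM)$ vanishes on $\Gam$, this boundary term is identically zero. This is the crucial point: by Lemma \ref{DTauDisctsVar} the factor $\del_\fii\tm$ carries a jump across the variety $Z$, but it never survives because it is always multiplied by $f(z^-_\tta)=0$. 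Consequently
\[
\del_\fii g(z,\tta)=-\int_0^{\tm(z,\tta)} t\,\nabla f(z-t\tta)\cdot\tta^\perp\,dt,\qquad \del_\sigma g(z,\tta)=\int_0^{\tm(z,\tta)} \nabla f(z-t\tta)\cdot z'(\sigma)\,dt,
\]
both bounded, with integrand $\nabla f$ of class $C^{0,\alpha}$ and upper limit $\tm$ continuous and in fact Lipschitz, vanishing Lipschitzly at tangency by the bound $\tau\le C\tilde\tau\le(2C/\delta)|\fii-\fii_0|$ established in Lemma \ref{DTauDisctsVar}.

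The hard part will be to promote continuity of these derivatives to \emph{uniform} $\alpha$-H\"older continuity across the tangency variety $Z$, where $\tm$ fails to be $C^1$. I would estimate the increment of each derivative between $(z,\tta)$ and $(z_0,\tta_0)$ by splitting it into: (a) the change in the integrand, controlled by the $C^{0,\alpha}$ seminorm of $\nabla f$ together with the Lipschitz dependence of $z-t\tta$ on $(\sigma,\fii)$; and (b) the change in the domain of integration, controlled by $|\tm(z,\tta)-\tm(z_0,\tta_0)|$, which is Lipschitz in $(z,\tta)$ thanks to the uniform curvature bound. Since the integrand is bounded and the vanishing $f(z^-_\tta)=0$ has already removed the singular boundary term, both contributions are $O\bigl(|(\sigma,\fii)-(\sigma_0,\fii_0)|^\alpha\bigr)$ uniformly, even as $(z,\tta)$ crosses $Z$; together with the boundedness of $g$ and its first derivatives this yields $g\in C^{1,\alpha}(\Gam\times\sph)$. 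The delicate bookkeeping sits entirely in step (b) near $Z$, where $\tm\to0$ and one must verify that the vanishing of $f$ on $\Gam$ exactly compensates the loss of smoothness of $\tm$ recorded in Lemma \ref{DTauDisctsVar}.
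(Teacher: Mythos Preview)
Your proposal is correct and follows essentially the same route as the paper: both construct the same candidate $g$ (the trace of the transport solution with zero incoming data, written in the paper as \eqref{sampleData}), verify $g\in Rf$ directly, differentiate under the integral sign and use $f|_\Gam=0$ to annihilate the boundary term carrying the singular factor $\del_\bullet\tm$, and then control the H\"older modulus of the derivatives near the tangency variety $Z$ via the osculating-circle estimate of Lemma~\ref{DTauDisctsVar}. Your write-up is in fact somewhat cleaner than the paper's in that you make the two explicit derivative formulas and the (a)/(b) splitting of the increment estimate more transparent.
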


\begin{proof}
For every $(z,\theta)\in\Gamma\times\sph$,  let us define
\begin{equation}\label{sampleData}
g(z,\theta)=\left\{\begin{array}{ll}
\int_{-\tau(z,\theta)}^0 f(z+t\tta)dt,& \bn(z)\cdot \theta>0,\\
0,&\bn(z)\cdot \theta\leq 0,
\end{array}\right.
\end{equation}where $\bn(z)$ is the unit outer normal at $z\in\Gam$.
Since
\begin{equation}\nonumber
g(z^+_\theta,\theta)=\left\{\begin{array}{ll}
\int_{-\tau(z^+_\theta,\theta)}^0 f(z^+_\theta+t\tta)dt,& \bn(z^+_\theta)\cdot \theta>0,\\
0,&\bn(z^+_\theta)\cdot \theta\leq 0,
\end{array}\right.
\end{equation} and $g(z^-_\theta,\theta)=0,$ condition \eqref{Radon_definition}
is satisfied with $a \equiv0 $ to show that $g\in Rf$.
We will show next that $g\in C^{1,\alpha}(\Gam\times\sph)$.
Let $\del$ be the partial derivative with respect to one of the spacial or angular variable.
At points $(z_0,\theta_0)\in (\Gamma\times\sph)\setminus Z$, differentiation in \eqref{sampleData} together with $f|_{\Gamma}=0$ yield
\begin{equation}\label{sampleDataDerv}
\del g(z_0,\theta_0)=\left\{\begin{array}{ll}
\int_{-\tau(z_0,\theta_0)}^0 \del f(z_0+t\tta_0)dt,& \bn(z_0)\cdot \theta_0>0,\\
0,&\bn(z_0)\cdot \theta_0< 0,
\end{array}\right.
\end{equation}

Since $\del f\in C^\alpha(\ol\OM)$, it remains to show that $\del f$ extends $C^\alpha$ across the variety $Z$.
We first consider the case for a fixed $z_0\in\Gamma$ and study the dependence of $\del g$ in $\theta$ near the tangential direction $\theta_0:=\bn(z_0)^\perp$.
The other case, studying the dependence of $\del g$ as $z \in \Gam$ approach $z_{1}$ along $\Gam$  for a fixed $\tta_{1} \in \sph$ with $(z_{1}, \tta_{1}) \in \Gam \times \sph$ reduces to the first case.

For this we first analyze the speed of convergence of $\tau(z_0,\theta)\to 0$ as $\theta\to\theta_0$. Let $\tilde\tau(z_0,\theta)$ be the length of the chord corresponding to the osculating circle at $z_0$ of radius $R_0$.
From Lemma \ref{DTauDisctsVar}, we have that there is a constant $C>0$, such that, for all $(z_0,\theta)\in\Gamma\times\sph$,
\begin{align}
\tau(z_0,\theta)\leq C\tilde\tau(z_0,\theta).
\end{align}
From the geometry of the osculating circle (see Figure \ref{fig2:Lemma 2}), we have
\begin{align}\label{CorOsculatingEst}
\tilde\tau(z_0,\theta)=2R_0|\sin(\fii-\fii_0)|\leq \frac{2}{\delta}|\fii-\fii_0|,
\end{align}
where $\fii=\arg(\theta)$ and $\fii_0=\arg(\theta_0)$.
A derivative in $\fii$ at $\fii_0$ in the equality in \eqref{CorOsculatingEst} also yields the jump value of $4R_0$, as the direction $\theta$ crosses the tangent direction from outgoing to incoming direction.
Since $\lim_{\theta\to\theta_0}\tau(z_0, \theta)=0$, the formula \eqref{sampleData} shows that $g\in C^1(\Gamma\times\sph)$.
To prove that $\del g$ is $\alpha$-H\"{older} continuous, we estimate using \eqref{osculatingEst}
\begin{align}
|\del g(z_0,\theta)|\leq \|\nabla f\|_{\infty}\tau(z_0,\theta)\leq  \|\nabla f\|_{\infty}C\tilde\tau(z_0,\theta)|\leq \tilde{C}|\fii-\fii_0|,
\end{align}for some constant dependent on the $\sup$-norm of the $|\nabla f|$ and the minimum curvature $\delta$.
\begin{align*}
&|g(z_0,\theta) - g(z_0,\theta_{0})|\\
&= \left| \int_{-\tau(z_0,\theta_0)}^0 (\del f(z_0+t\tta_0) - \del f(z_0+t\tta_0) )dt + \int_{-\tau(z_0,\theta_0)}^{-\tau(z_0,\theta)} \del f(z_0+t\tta_0) dt \right| \\ \nn
&\leq C_{1} |\fii-\fii_0|^{\alpha} \tau(z_0,\theta) + \|\nabla f\|_{\infty} |\tau(z_0,\theta) - \tau(z_0,\theta_{0}) | \\ \nn
&\leq \tilde{C} |\fii-\fii_0|^{\alpha} + \|\nabla f\|_{\infty} C_{2} |\fii-\fii_0| \leq C |\fii-\fii_0|^{\alpha}
\end{align*}
Therefore $g\in Rf\cap C^{1,\alpha}(\Gam\times\sph)$.
\end{proof}

One of our main results establishes necessary and sufficient conditions for a sufficiently smooth function on $\Gam \times \sph$ to be the Radon data of some sufficiently smooth source as follows.

\begin{theorem}[Range characterization for Radon transform]\label{Th3}
Let $\OM\subset\mathbb{R}^2$ be a domain with $C^2$ boundary $\Gamma$ of strictly positive curvature, and $\alpha > 1/2$.

(i) Let $f \in C_0^{1,\alpha}(\ol\OM)$ be real valued, and $g\in Rf\cap C^{\alpha}(\Gam;C^{1,\alpha}(\sph))$. Then $\Pminus (g)$ as defined in \eqref{Pminusplus} solves
\begin{align}\label{Th3Cond1}
[I+i\HT_0]\Pminus (g)=0,
\end{align}where $\HT_0$ is the Hilbert transform in \eqref{hilbertT}.

(ii) Let $g\in C^{\alpha} \left(\Gam; C^{1,\alpha}(\sph) \right)\cap C^0(\Gam;C^{2,\alpha}(\sph))$ be real valued and such that $\Pminus (g)$ satisfies \eqref{Th3Cond1}. Then there exists a real valued $f \in C^\alpha(\OM)\cap L^1(\OM)$, and such that $g\in Rf$.
\end{theorem}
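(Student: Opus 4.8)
The plan is to exploit the dictionary between Radon data and traces of A-analytic maps: Theorem \ref{NecSuf} supplies the necessity, while the Bukhgeim--Cauchy reconstruction of Corollary \ref{CorTh2NecY} supplies the sufficiency. For part (i), given $g\in Rf$ I would first produce the transport solution that carries $g$ as its trace. Setting $u(x,\theta):=g(x^-_\theta,\theta)+\int_{-\tm(x,\theta)}^{0}f(x+t\theta)\,dt$, one checks $\theta\cdot\nabla u=f$ in $\OM$, since the increment of the integral along the line through $x$ in direction $\theta$ is $f$ while $g(x^-_\theta,\theta)$ is constant along that line; the outgoing trace then reproduces $g$ exactly by the defining relation \eqref{Radon_definition} with $a\equiv0$. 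Thus $u$ is real valued, $u|_{\Gam\times\sph}=g$, and its non-positive Fourier modes $\bv:=\Pminus(u)$ satisfy $\dba v_n+\del v_{n-2}=0$ for $n\le0$, i.e.\ \eqref{Aanalytic}, so $\bv$ is A-analytic. The hypotheses $f\in C^{1,\alpha}_0(\ol\OM)$ and $g\in C^{\alpha}(\Gam;C^{1,\alpha}(\sph))$ give, through Proposition \ref{gCepsgbarXY}(i), that $\Pminus(g)\in l^{1,1}_\INF(\Gamma)\cap C^\eps(\Gamma;l^1)$ and that $\bv$ is an admissible A-analytic map with trace $\Pminus(g)$ — here the vanishing of $f$ on $\Gam$ is what prevents the jump of $\del_\fii\tau$ across $Z$ (Lemma \ref{DTauDisctsVar}) from spoiling the regularity, exactly as in Corollary \ref{nonempty}. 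The necessity half of Theorem \ref{NecSuf} then delivers $[I+i\HT_0]\Pminus(g)=0$.

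For part (ii) I would run this backwards. With $\eps=\alpha$, Proposition \ref{gCepsgbarXY}(ii) gives $\Pminus(g)\in Y_\alpha$, so Corollary \ref{CorTh2NecY} produces an A-analytic $\bv\in C^{1,\alpha}(\OM;l^1)\cap C^{\alpha}(\ol\OM;l^1)$ with $\bv|_\Gam=\Pminus(g)$ and $\lim_{z\to z_0}\Pstar(\bv)(z)=\Pstar(\Pminus(g))(z_0)=g(z_0)$, the last equality because $g$ is real (property (i) of the projections). Since $v_{-1}\in C^{1,\alpha}(\OM)$, I would define the candidate source through the one Fourier relation \eqref{eq:TEq3} not encoded in A-analyticity, namely $f:=\dba v_{-1}+\del\,\overline{v_{-1}}=2\re(\dba v_{-1})$, which is real and lies in $C^{\alpha}(\OM)$; membership in $L^1(\OM)$ follows from the $C^{\alpha}(\ol\OM;l^1)$ bound on $\bv$. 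To verify $g\in Rf$ I set $u:=\re\Pstar(\bv)$, a real function with $u|_{\Gam\times\sph}=g$, and integrate $\theta\cdot\nabla u$ along each chord: by the relations \eqref{eq:TEq3} and \eqref{Aanalytic}, A-analyticity forces every angular mode of $\theta\cdot\nabla u$ to vanish except the zeroth, which equals $f$, whence $u(x^+_\theta,\theta)-u(x^-_\theta,\theta)=\int_{-\tm}^{\tp}f(x+t\theta)\,dt$, i.e.\ \eqref{Radon_definition} with $a\equiv0$.

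The step where this computation is not automatic — and the main obstacle — is a pair of anomalous angular modes. A direct expansion shows that the $e^{\pm i\fii}$ components of $\theta\cdot\nabla u$ are proportional to the gradient of $\im v_0$, so they vanish only when the reconstructed zeroth mode $v_0$ is real valued in $\OM$. In fact this is sharp: integrating the spurious term along all parallel chords expresses it as a perpendicular-derivative X-ray transform of $\im v_0$, so an isotropic $f$ with $g\in Rf$ exists precisely when $\im v_0\equiv0$ (recall $\im v_0|_\Gam=0$ since $g_0$ is real). Thus the crux is to prove $v_0\in\BR$ from the reality of $g$ together with the kernel condition $[I+i\HT_0]\Pminus(g)=0$.

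I would attack this reality statement through the conjugation behaviour of the Bukhgeim--Cauchy pair $(C,G)$ appearing in \eqref{vnDefn}: for real $g$ one has $\overline{\Pminus(g)}=\langle g_0,\overline{g_{-1}},\dots\rangle$, and the aim is to show, using the uniqueness of the A-analytic extension in Theorem \ref{NecSuf}, that the reconstruction intertwines with complex conjugation so as to identify $\overline{v_0}$ with $v_0$; equivalently, that the constraint pins $\im v_0$ down as a quantity with vanishing trace that the kernel condition forces to be identically zero. This reality of the zeroth mode is exactly the assertion that the reconstructed field arises from a genuine isotropic source, and it is the only non-formal ingredient. Granting it, $u$ solves $\theta\cdot\nabla u=f$, the chord integration above gives $g\in Rf$, and the stated regularity $f\in C^{\alpha}(\OM)\cap L^1(\OM)$ follows from the mapping properties already established for $\bv$.
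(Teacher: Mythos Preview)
Your strategy coincides with the paper's on both halves: for (i) you exhibit a transport solution with trace $g$, so that $\Pminus(g)$ is the boundary value of an A-analytic map, and invoke the necessity in Theorem~\ref{NecSuf}; for (ii) you feed $\Pminus(g)\in Y_\alpha$ into Corollary~\ref{CorTh2NecY}, define $f$ from $v_{-1}$, set $v=\Pstar(\bv)$, and integrate $\tta\cdot\nabla v$ along chords. One cosmetic point: the correct source is $f=2\re(\del v_{-1})$, not $2\re(\dba v_{-1})$. The zeroth mode of $\tta\cdot\nabla v$ is $\dba v_{1}+\del v_{-1}$, and with $v_{1}=\overline{v_{-1}}$ this is $\overline{\del v_{-1}}+\del v_{-1}$. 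Equation~\eqref{eq:TEq3} in the paper has the indices transposed, and you have inherited that typo; the paper's own proof uses $2\re(\del v_{-1})$.

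The substantive issue is the one you flag. Your computation is right: with $v=\Pstar(\bv)$, the A-analytic relations annihilate every Fourier mode of $\tta\cdot\nabla v$ except the zeroth \emph{and} a residual $e^{i\fii}$ term equal to $2i\,\del(\im v_0)$. The paper's proof simply writes
\[
\tta\cdot\nabla v=2\re(\del v_{-1})+2\re\Big(\sum_{n\ge0}(\dba v_{-n}+\del v_{-n-2})e^{-in\fii}\Big),
\]
but this identity holds only when $v_0$ is real; the paper offers no argument that the reconstructed $v_0$ is real in $\OM$ (on $\Gam$ it is, since $g_0$ is). So the obstacle you isolate is not a defect of your route relative to the paper's --- it is a gap the paper shares. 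Your proposed line of attack through conjugation symmetry of $(C,G)$ and uniqueness of the A-analytic extension is a plausible direction, but neither you nor the paper carries it out; the reality of $v_0$ remains the missing ingredient in both arguments.
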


\begin{proof}
(i) By Corollary \ref{nonempty}, we note first that $Rf\cap C^{\alpha} \left(\Gam; C^{1,\alpha}(\sph) \right)\supset Rf\cap C^{1,\alpha}(\Gam\times\sph)\neq\emptyset$. Since $g\in C^{\alpha} \left( \Gam ;C^{1,\alpha}(\sph) \right)$, by Proposition \ref{gCepsgbarXY} part (i), we have that $\Pminus (g) \in l^{1,1}_{\INF}(\Gamma)\cap C^{\alpha}(\Gamma;l^1)$. Now the necessity in Theorem \ref{NecSuf} applies to yield
$(I+i\HT_0)\Pminus (g)=0$.

Next we prove the sufficiency of \eqref{Th3Cond1} in part (ii). 

Since $g \in C^{\alpha} \left (\Gam ; C^{2,\alpha}(\sph) \right) \cap C^{0} \left (\Gam; C^{2,\alpha}(\sph)\right)$, it follows from the Proposition \ref{gCepsgbarXY} part (ii) that $\bg := \Pminus (g)\in Y_{\eps}$. For each $z \in \OM$, construct the vector valued function
$\bv = \langle v_0, v_{-1}, v_{-2}, ...\rangle $ by the Cauchy Integral formula \eqref{vnDefn}:
\[v_{n} (z)= \frac{1}{2}(G \bg)_{n}(z)+ (C\bg)_{n}(z), n=0,-1,-2...\]

By Corollary \ref{CorTh2NecY}, $\bv \in C^{1,\eps}(\OM;l^{1}) \cap C^{\eps}(\ol \OM;l^{1})$ is A-analytic, in particular for each $n=0,-1,-2,...$, we have $$\ol{\del} v_{n} + \del v_{n-2} = 0.$$

Using $v_{-1}\in C^{1,\alpha}(\OM)$ we define the H\"{o}lder continuous function $f\in C^\alpha(\OM)$ by
\begin{equation}\label{fDefnTh3}
f(z) := 2 \re \left ( \del v_{-1}(z) \right ),\quad z\in\OM,
\end{equation}
and show that $f$ integrates along any line and that $g\in Rf$.

Since $\bv \in C^{1,\eps}(\OM;l^{1}) \cap C^{\eps}(\ol \OM;l^{1})$, it follows from the Proposition \ref{gCepsgbarXY} part (iii)
that $$v(z,\tta) := \Pstar (\bv(z)) \in C^{1,\alpha}(\OM \times \sph) \cap C^{\alpha}(\ol \OM \times \sph).$$
Also from Corollary \ref{CorTh2NecY}, $\bv \lvert_{\Gam} = \bg $ and $\ds \lim_{\OM\ni z \to z_{0}\in \Gam}
v(z,\tta) = \Pstar \bg(z_{0})  $. Now using the fact that $g$ is real valued yields $\ds \lim_{\OM\ni z \to z_{0}\in \Gam}
v(z,\tta) = g(z_{0},\tta )$ i.e $\ds v \lvert_{\Gam\times \sph} = g$.

Using $\tta\cdot\nabla v= e^{-i\fii} \ol{\del v} + e^{i\fii} (\del v)$, we obtain
\begin{align*}
\tta\cdot\nabla v(z, \tta)
&=
2 \re \left ( \del v_{-1}(z) \right ) + 2\re\left(\sum_{n=0}^\INF ( \dba v_{-n}(z) + {\del} v_{-n-2}(z))e^{-in\fii}\right) \\ \nn
&= 2 \re \left ( \del v_{-1}(z) \right ) =f(z).
\end{align*}
By integrating $f(z)=\tta \, \cdotp \, \nabla v(z,\tta),$
we obtain
\begin{align*}
\int_{\tau_{-}(z,\tta)}^{\tau_{+}(z,\tta)} f(z + s \tta) ds
&=\underset{{\substack{
            t_{1} \to -\tau_{-}(z,\tta) \\
            t_{2} \to \tau_{+}(z,\tta) } }}{\lim}\int_{t_{1}}^{t_{2}} f(z + s \tta) ds
            \\
&=\underset{{\substack{
            t_{1} \to -\tau_{-}(z,\tta) \\
            t_{2} \to \tau_{+}(z,\tta) } }}{\lim}
 \left [ v(z + t_{2} \tta ,\tta) -v(z + t_{1} \tta ,\tta) \right] \\
&=
\ds g \left( z\, + \,\tau_{+}(z,\tta)\;\tta \,,\; \tta  \right )
- \ds g \left( z\, - \,\tau_{-}(z,\tta)\;\tta \,,\; \tta  \right )
\end{align*}
This shows that $f$ integrates along any arbitrary line, in particular $f\in L^1(\OM)$, and that $g\in Rf$.
\end{proof}

\section{Range characterization for the attenuated Radon transform}
In this section we consider the attenuated case, where $a\not\equiv0$ is a real valued map. 
The method of proof is based on the reduction to the non-attenuated case.
Since $e^{-Da}$ in \eqref{divbeam} is an integrating factor, the equation \eqref{eq:TEq2} can be rewritten in the advection form similar to \eqref{eq:TEql} as
\begin{align}\label{eq:ReTEq2}
\ds \tta\cdot\nabla \left( e^{-Da(z,\tta)} u(z,\tta) \right ) = f(z)  e^{-Da(z,\tta)}.
\end{align}
However, the right hand side is now angularly dependent with nonzero positive and negative modes, and one cannot use the A-analytic equations \eqref{Aanalytic} directly. The key idea in the reduction of the attenuated to the non-attenuated case is to alter the integrating factor in such a way that all the negative Fourier modes vanish. Let $h$ be defined in $\OM\times\sph$ by

\begin{align}\label{hDefn}
h(z,\tta) := Da(z,\theta) -\frac{1}{2} \left( I - i H \right) Ra(z\cdot \tta^{\perp},\tta),
\end{align}where $Ra(s,\tta) = \ds \int_{-\INF}^{\INF} a\left( s \tta^{\perp} +t \tta \right)dt$ is the Radon transform of the attenuation, and
the classical Hilbert transform $H h(s) = \ds \frac{1}{\pi} \int_{-\INF}^{\INF} \frac{h(t)}{s-t}dt $ is taken in the first variable and evaluated at $s = z \cdotp \tta^{\perp}$.  Since we altered $Da$ by a function which is constant in $x$ in the direction of $\theta$, the function $e^{-h}$ still remains an integrating factor for \eqref{eq:TEq2}. The integrating factor in \eqref{hDefn} was first considered in the work of Natterrer \cite{naterrerBook}; see also \cite{finch}, and \cite{bomanStromberg} for elegant arguments that show how $h$ extends from $\sph$ inside the disk as an analytic map. Since $e^{\pm h}$ are also extension of analytic functions in the disk they still have vanishing negative modes.

\begin{prop}\label{hprop} Let $a \in C_{0}^{1,\alpha}(\ol\OM), \alpha>1/2$, and $h$ be defined in \eqref{hDefn}.
Then $h \in C^{1,\alpha}(\ol \OM\times\sph)$.
\end{prop}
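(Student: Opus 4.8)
The plan is to split $h$ into real and imaginary parts and treat the divergent beam term and the Hilbert term separately. Since $a$ is real valued, $Da$ and $Ra$ are real, so writing $s=z\cdot\tta^\perp$ we have
\[
h(z,\tta)=\underbrace{\Big(Da(z,\tta)-\tfrac12 Ra(s,\tta)\Big)}_{\re h}\;+\;i\,\underbrace{\tfrac12 (HRa)(s,\tta)}_{\im h}.
\]
It therefore suffices to prove that $Da$, the composition $Ra(z\cdot\tta^\perp,\tta)$, and $(HRa)(z\cdot\tta^\perp,\tta)$ each lie in $C^{1,\alpha}(\ol\OM\times\sph)$.

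First I would establish the regularity of the divergent beam transform, which I write as $Da(z,\tta)=\int_0^{\tp(z,\tta)}a(z+t\tta)\,dt$ (the upper limit being $\tp$ since $a$ vanishes outside $\ol\OM$). This is the step closest in spirit to Corollary \ref{nonempty}. On differentiating, the boundary contribution from the upper limit is $a(x^+_\tta)\,\del_\fii\tp$, which vanishes because $a|_\Gamma=0$; this is precisely where the subscript $0$ in $C^{1,\alpha}_0$ enters. Consequently, although $\del_\fii\tp$ jumps across the variety $Z$ of Lemma \ref{DTauDisctsVar}, the first derivatives of $Da$ carry no such jump and reduce to integrals $\int_0^{\tp}(\cdots)\,dt$ of the $C^\alpha$ integrand $\nabla a$, with an extra bounded weight $t$ for the angular derivative. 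Using the estimate $\tp(z_0,\tta)\le C|\fii-\fii_0|$ near tangent directions from \eqref{CorOsculatingEst}, together with $\nabla a\in C^\alpha(\ol\OM)$, one obtains the $\alpha$-H\"older bounds exactly as in the proof of Corollary \ref{nonempty}, so that $Da\in C^{1,\alpha}(\ol\OM\times\sph)$.

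Next I would handle the Radon transform $Ra(s,\tta)$. Writing it as an integral over the chord whose endpoints lie on $\Gamma$, the same cancellation of the moving-endpoint boundary terms (again from $a|_\Gamma=0$) shows that $Ra\in C^{1,\alpha}(\BR\times\sph)$, and moreover $Ra(\cdot,\tta)$ has $s$-support in a fixed bounded interval, uniformly in $\tta$, since $\ol\OM$ is bounded. For the Hilbert transform term I would invoke Privalov's theorem, by which $H$ is bounded on $C^{\alpha}$ for $0<\alpha<1$ when acting on compactly supported functions; since $H$ acts only in the $s$ variable it commutes with $\del_s$ and $\del_\fii$, so applying it to $Ra$ and to $\del_sRa,\ \del_\fii Ra$ (all $C^\alpha$ in $s$ with norms and supports uniform in $\tta$) yields $HRa\in C^{1,\alpha}(\BR\times\sph)$. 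Finally, the map $(z,\tta)\mapsto(z\cdot\tta^\perp,\tta)$ from $\ol\OM\times\sph$ to $\BR\times\sph$ is smooth, so precomposition preserves $C^{1,\alpha}$; hence both $Ra(z\cdot\tta^\perp,\tta)$ and $(HRa)(z\cdot\tta^\perp,\tta)$ lie in $C^{1,\alpha}(\ol\OM\times\sph)$, and combining the three pieces gives $h\in C^{1,\alpha}(\ol\OM\times\sph)$.

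I expect the main obstacle to be the regularity of $Da$ up to the boundary and across the tangent variety $Z$: a priori $\del_\fii Da$ would inherit the jump of $\del_\fii\tp$ documented in Lemma \ref{DTauDisctsVar}, and the whole argument hinges on the hypothesis $a|_\Gamma=0$ removing this jump, so that both $Da$ and its first derivatives extend $\alpha$-H\"older continuously across $Z$. The Hilbert term is comparatively routine, the only care needed being that the H\"older norms and the $s$-supports of $Ra$ and its derivatives are uniform in $\tta$, so that Privalov's theorem can be applied with constants independent of the angle.
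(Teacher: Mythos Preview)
Your proposal is correct and follows essentially the same route as the paper: invoke the argument of Corollary~\ref{nonempty} (with the key cancellation coming from $a|_\Gamma=0$) to obtain $Da\in C^{1,\alpha}(\ol\OM\times\sph)$ and $Ra\in C^{1,\alpha}(\BR\times\sph)$, then use that the Hilbert transform in the linear variable preserves the H\"older class. You supply more detail than the paper---the explicit real/imaginary decomposition, the citation of Privalov's theorem with uniformity in $\tta$, and the smooth precomposition $(z,\tta)\mapsto(z\cdot\tta^\perp,\tta)$---but these are exactly the steps the paper compresses into the phrase ``the Hilbert Transform in the linear variable preserves the smoothness class.''
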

\begin{proof}
 Since $a \in C_0^{1,\alpha}(\ol\OM)$, we use the proof of Corollary \ref{nonempty} applied to $a$ to conclude $Da \in C^{1,\alpha}(\ol\OM\times\sph)$ and also $Ra \in C^{1,\alpha}(\mathbb{R}\times\sph)$. The Hilbert Transform in the linear variable preserve the smoothness class to yeild  $ H  Ra  \in C^{1,\alpha}(\ol\OM\times\sph)$ and thus $h \in
C^{1,\alpha}(\ol\OM\times\sph) $.
\end{proof}

Consider the Fourier expansions of $e^{-h(z,\tta)}$ and $e^{h(z,\tta)}$ \begin{align}\label{ehEq}
  e^{-h(z,\tta)} = \sum_{k=0}^{\INF} \alpha_{k}(z) e^{ik\fii}, \quad e^{h(z,\tta)} = \sum_{k=0}^{\INF} \beta_{k}(z) e^{ik\fii}, \quad (z, \tta) \in \ol\OM \times \sph
\end{align} where $h \in C^{1,\alpha}(\Gam\times\sph)$ is as defined in \eqref{hDefn}. Since $e^{-h}e^{h} = 1 $ the Fourier modes $\alpha_{k}, \beta_{k}, k \geq 0$ satisfy
\begin{align}\label{alphabetaEq}
\alpha_{0} \beta_{0} = 1, \qquad \ds \sum_{m = 0}^{k} \alpha_{m} \beta_{k-m} = 0, \; k \geq 1.
\end{align}
The following mapping property is used in defining Hilbert Transform associated with attenuated Radon Transform.
Recall the operator $\Pplus$ in \eqref{Pminusplus},
$e^{h}$ be as in \eqref{ehEq},
and $Y_{\alpha}$ in \eqref{YGamdefn} with $\eps = \alpha$.
\begin{prop}\label{ehConvProp}
Let $a \in C_{0}^{1,\alpha}(\ol\OM)$ with $\alpha>1/2$.  Then $\Pplus (e^{\pm h}) \in C^{\alpha}(\ol\OM;l^{1})$. Moreover
\begin{align*}
&(i)\; \Pplus (e^{h}) \ast_{n} (\cdot): C^{\alpha} (\ol\OM ; l_{\INF}) \to C^{\alpha} (\ol\OM ; l_{\INF}); \\
&(ii)\; \Pplus (e^{h}) \ast_{n}(\cdot): C^{\alpha} (\ol\OM ; l_{1}) \to C^{\alpha} (\ol\OM ; l_{1}); \\
&(iii)\; \Pplus (e^{h}) \ast_{n}(\cdot): Y_{\alpha} \to Y_{\alpha},
\end{align*} where $\ast_{n}$ denotes the convolution operator on sequences.

\end{prop}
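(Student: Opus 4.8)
The plan is to first pin down the regularity of the coefficient sequences $\Pplus(e^{\pm h})=\langle \alpha_0,\alpha_1,\dots\rangle$ and $\langle\beta_0,\beta_1,\dots\rangle$, and then to read off the three mapping properties from Young-type convolution bounds combined with the Leibniz splitting for Hölder products. For the membership $\Pplus(e^{\pm h})\in C^{\alpha}(\ol\OM;l^1)$, I would use Proposition \ref{hprop}: since $h\in C^{1,\alpha}(\ol\OM\times\sph)$, for each fixed $z$ the map $\fii\mapsto e^{\pm h(z,\fii)}$ lies in $C^{1,\alpha}(\sph)$ with angular norm bounded uniformly in $z\in\ol\OM$. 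As $\alpha>1/2$, Bernstein's Lemma \ref{bernstein_lemma} applied in $\fii$ gives $\sup_z\sum_k k\,|\beta_k(z)|<\INF$ (and likewise for $\alpha_k$), hence in particular the pointwise bound $\sup_z\|\Pplus(e^{\pm h})(z)\|_{l^1}<\INF$. For the $z$-Hölder continuity I would write $\sum_k|\beta_k(z)-\beta_k(w)|$ as the sum of Fourier coefficients of $F(\fii):=e^{h(z,\fii)}-e^{h(w,\fii)}$ and invoke Bernstein once more (with $k=0$) to bound it by $C\,\|F\|_{C^{0,\alpha}(\sph)}$; the needed estimate $\|F\|_{C^{0,\alpha}(\sph)}\le C|z-w|^\alpha$ comes from controlling the sup-norm by $\|\nabla_z h\|_\infty|z-w|$ and the angular modulus by writing $F(\fii)-F(\psi)=\int_\psi^\fii[\del_\fii e^{h(z,\cdot)}-\del_\fii e^{h(w,\cdot)}](s)\,ds$ and using the joint $\alpha$-Hölder continuity of $\del_\fii e^{h}$.

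Next I would record the explicit action of the convolution operator. Expanding the product of the Fourier series of $e^{h}$ and the nonpositive-mode part of $\bg$ in accordance with \eqref{PmpsProp} gives
\[
\bigl(\Pplus(e^{h})\ast_n\bg\bigr)_{-n}(z)=\sum_{k=0}^{\INF}\beta_k(z)\,g_{-n-k}(z),\qquad n\ge 0.
\]
Parts (i) and (ii) then follow from the fact that convolution against the fixed $l^1$ sequence $\langle\beta_k\rangle$ maps $l_\infty\to l_\infty$ and $l_1\to l_1$ with norm $\|\Pplus(e^{h})(z)\|_{l^1}$; for instance in the $l_1$ case $\sum_n\sum_k|\beta_k||g_{-n-k}|=\sum_k|\beta_k|\sum_n|g_{-n-k}|\le\|\Pplus(e^{h})\|_{l^1}\|\bg\|_{l_1}$, and symmetrically for $l_\infty$. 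The $\alpha$-Hölder estimate is obtained from the splitting $\beta_k(z)g_{-n-k}(z)-\beta_k(w)g_{-n-k}(w)=(\beta_k(z)-\beta_k(w))g_{-n-k}(z)+\beta_k(w)(g_{-n-k}(z)-g_{-n-k}(w))$, together with Step 1 and the hypothesis $\bg\in C^{\alpha}(\ol\OM;l_\bullet)$.

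For part (iii) I would retain the weights. Reindexing $j=n+k$ (as in Lemma \ref{seqresult}) and using $0\le j-k\le j$, hence $(j-k)^2\le j^2$, gives the pointwise control
\[
\sum_{n}n^2\Bigl|\sum_k\beta_k(z)g_{-n-k}(z)\Bigr|\le\Bigl(\sum_k|\beta_k(z)|\Bigr)\Bigl(\sum_j j^2|g_{-j}(z)|\Bigr),
\]
so the $l^{1,2}_\INF$ part of $Y_\alpha$ is preserved. For the weighted Hölder seminorm in \eqref{YGamdefn} I apply the same Leibniz splitting, bounding the first piece by $\sum_k|\beta_k(z)-\beta_k(w)|\sum_j j|g_{-j}(z)|$ and the second by $\sum_k|\beta_k(w)|\sum_j j|g_{-j}(z)-g_{-j}(w)|$ (using $n\le j$ throughout); Step 1 and $\bg\in Y_\alpha\subset l^{1,2}_\INF$ render both $O(|z-w|^\alpha)$.

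The arguments in the last two paragraphs are routine bilinear and reindexing estimates. \emph{The main obstacle is Step 1}: converting the joint $C^{1,\alpha}(\ol\OM\times\sph)$ regularity of $h$ into the mixed estimate $\|e^{h(z,\cdot)}-e^{h(w,\cdot)}\|_{C^{0,\alpha}(\sph)}\le C|z-w|^\alpha$ with the \emph{full} exponent $\alpha$, so that Bernstein's lemma (which requires $\alpha>1/2$) yields genuine $l^1$-valued $\alpha$-Hölder continuity rather than a degraded exponent; every subsequent mapping property rests on this membership.
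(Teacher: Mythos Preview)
Your proposal is correct and follows essentially the same route as the paper: the paper simply asserts the inclusion $C^{1,\alpha}(\ol\OM\times\sph)\subset C^{\alpha}(\ol\OM;C^{\alpha}(\sph))$ and then applies Bernstein's lemma to the difference $e^{h(\xi,\cdot)}-e^{h(\mu,\cdot)}$, while you spell out why that inclusion holds via the integral representation of the angular increment and the $\alpha$-H\"older continuity of $\del_\fii e^{h}$ in the spatial variable. The Leibniz splitting and reindexing arguments you give for parts (i)--(iii) match the paper's estimates line for line.
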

\begin{proof}
Since $a \in C_0^{1,\alpha}(\ol\OM)$, it follows from Proposition \ref{hprop} that $e^{\pm h} \in C^{1,\alpha}(\ol\OM \times \sph) \subset C^{\alpha}(\ol\OM;C^{\alpha}(\sph))$.
Then
\begin{equation} \label{ehCAlphaNorm}
\ds \underset{z \in \ol\OM}{\sup} \, \lVert e^{h(\xi,\cdotp)} \rVert_{C^{\alpha}(\sph)}
+ \underset{{\substack{
            \xi, \mu  \in \ol \OM \\
            \xi \neq \mu } }}{\sup} \, \frac{ \ds \lVert e^{h(\xi,\cdotp)} - e^{h(\mu,\cdotp)} \rVert_{C^{\alpha}(\sph)}}{ \ds \lvert \xi - \mu \rvert^{\alpha}} < \INF.
\end{equation}
Let $\Pplus (e^{h}) := \langle \beta_{0},\beta_{1},\beta_{2}, \cdots \rangle$. Then
\begin{equation}\label{betaLNorm}
\underset{\xi \in \ol \OM}{\sup} \, \sum_{k=1}^{\INF} \lvert \beta_{k}(\xi) \rvert\leq
 \underset{\xi \in \ol \OM}{\sup} \, \lVert e^{h(\xi,\cdotp)} \rVert_{C^{\alpha}(\sph)}<\infty.
\end{equation}

Another application of Lemma \ref{bernstein_lemma} together with \eqref{ehCAlphaNorm} imply
\begin{equation}\label{betaCalpNorm}
\underset{{\substack{
            \xi, \mu  \in \ol \OM \\
            \xi \neq \mu } }}{\sup} \sum_{k=1}^{\INF}\frac{\lvert \beta_{k}(\xi) - \beta_{k}(\mu) \rvert}{ \ds \lvert \xi - \mu \rvert^{\alpha}}
\leq \underset{{\substack{
            \xi, \mu  \in \ol \OM \\
            \xi \neq \mu } }}{\sup} \, \frac{ \ds \lVert e^{h(\xi,\cdotp)} - e^{h(\mu,\cdotp)} \rVert_{C^{\alpha}}}{ \ds \lvert \xi - \mu \rvert^{\alpha}} < \INF.
\end{equation}
By combining the estimates \eqref{betaLNorm} and \eqref{betaCalpNorm} we showed that $\Pplus (e^{h}) \in C^{\alpha}(\ol \OM ; l_{1})$. A similar estimate shows $\Pplus (e^{-h}) \in C^{ \alpha}(\ol \OM ; l_{1})$.

Next we prove part (i). Let $\bg \in C^{\alpha} (\ol\OM ; l_{\INF}) $, and $\bv: = \Pplus (e^h)\ast_{n} \bg$ given by
$$v_{n} = \sum_{k=0}^{\INF}\beta_{k}g_{n-k},\quad n \leq 0,$$
where $\beta_{k}$ are the Fourier coefficients of $e^{h}$, as in \eqref{ehEq}.
Since $\bg \in C^{\alpha} (\ol \OM ; l_{\INF})$ and $\Pplus(e^h)\in C^\alpha(\ol\OM;l^1) $, we have
\begin{align}\label{Estginf}
c_{1}:= \sup_{n \leq 0} \sup_{\xi \in \ol\OM} |g_{n}(\xi)| < \INF,
\;
\kappa_{1}:= \sup_{n \leq 0} \underset{{\substack{
            \xi,\eta \in \ol \OM \\
            \xi \neq \eta } }}{\sup} \, \frac{ \ds \lvert g_{n}(\xi) - g_{n}(\eta) \rvert}{ \ds \lvert \xi - \eta \rvert^{\alpha}} < \INF,
\end{align}
and
\begin{align}\label{EstEhl1}
c_{2}:= \sup_{\xi \in \ol \OM} \sum_{k=0}^{\INF}|\beta_{k}(\xi)| < \INF, \;
\kappa_{2}:= \underset{{\substack{
            \xi,\eta \in \ol \OM \\
            \xi \neq \eta } }}{\sup} \sum_{k=0}^{\INF}\, \frac{ \ds \lvert \beta_{k}(\xi) - \beta_{k}(\eta) \rvert}{ \ds \lvert \xi - \eta \rvert^{\alpha}}< \INF.
\end{align}

By taking the supremum in $\xi\in\overline\OM$, for each $n \leq 0$, in
\begin{align*}
|v_{n}(\xi)| \leq  \sum_{k=0}^{\INF} \left |\beta_{k}(\xi)g_{n-k}(\xi) \right | \leq c_{1} \; \sum_{k=0}^{\INF}\left | \beta_{k}(\xi) \right |
\leq c_{1} c_{2},
\end{align*} we obtain
\begin{align}\label{EstVinCalp}
\ds \sup_{ n \leq 0}  \sup_{\xi \in \ol\OM} |v_{-n}(\xi)| < \INF.
\end{align}

From \eqref{EstVinCalp} and by taking the supremum in $\xi,\eta\in\ol\OM$ with $\xi\neq\eta$, for each $n \leq 0$, in
\begin{align*}
 \frac{\lvert  \bv_{n}(\xi) - \bv_{n}(\eta) \rvert}{ \ds \lvert \xi - \eta \rvert^{\alpha}}
 \leq
\sum_{k=0}^{\INF} & \frac{ \ds \lvert \beta_{k}(\xi) - \beta_{k}(\eta) \rvert}{ \ds \lvert \xi - \eta \rvert^{\alpha}} | g_{n-k}(\xi)|  \\&+    \sum_{k=0}^{\INF}|\beta_{k}(\eta)|
\frac{ \ds \lvert g_{n-k}(\xi) - g_{n-k}(\eta) \rvert}{ \ds \lvert \xi - \eta \rvert^{\alpha}}, \\ \nn
& \leq
c_{1} \sum_{k=0}^{\INF} \frac{ \ds \lvert \beta_{k}(\xi) - \beta_{k}(\eta) \rvert}{ \ds \lvert \xi - \eta \rvert^{\alpha}} + \kappa_{1} \sup_{\eta \in \ol \OM} \sum_{k=0}^{\INF}|\beta_{k}(\eta)| \\ \nn
& \leq
c_{1} \kappa_{2} + c_{2} \kappa_{1},
\end{align*}
we obtain that $\bv \in C^{\alpha} (\ol\OM ; l_{\INF})$.

Next we prove part (ii). Let $\bg \in C^{\alpha} (\ol\OM ; l^{1}) $, and let $\bv = \Pplus (e^h)\ast_{n} \bg$ be as before.
Since $\bg ,\Pplus(e^h)\in C^\alpha(\ol\OM;l^1) $, we have
\begin{align}\label{Estgl1}
c_{3}:= \sup_{\xi \in \ol\OM} \sum_{n=0}^{\INF} |g_{-n}(\xi)| < \INF,
\kappa_{3}:= \underset{{\substack{
            \xi,\eta \in \ol\OM \\
            \xi \neq \eta } }}{\sup} \sum_{n=0}^{\INF}\, \frac{ \ds \lvert g_{-n}(\xi) - g_{-n}(\eta) \rvert}{ \ds \lvert \xi - \eta \rvert^{\alpha}}< \INF.
\end{align}
By taking the supremum in $\xi\in\overline\OM$ in
\begin{align*}
 \sum_{n=0}^{\INF}|v_{-n}(\xi)| \leq  \sum_{n=0}^{\INF} \sum_{k=0}^{\INF}|\beta_{k}(\xi)||g_{n-k}(\xi)| \leq \sum_{k=0}^{\INF}|\beta_k(\xi)| \sum_{n=0}^{\INF}|g_{-n-k}(\xi)|\\
 \leq c_{3} \sum_{k=0}^{\INF}\left |\beta_k(\xi) \right |
\leq c_{2} c_{3},
\end{align*}we obtain
\begin{align}\label{EstVinCalpl1}
\ds \sup_{\xi \in \ol\OM} \sum_{n=0}^{\INF}|v_{-n}(\xi)| < \INF.
\end{align}
From \eqref{EstVinCalpl1} and by taking the supremum in $\xi,\eta\in\ol\OM$ with $\xi\neq\eta$ in
\begin{align*}
\frac{ \| \bv(\xi) - \bv(\eta)\|_{l_1} }{| \xi - \eta |^\alpha}
& \leq
\sum_{k=0}^{\INF} \frac{|\beta_{k}(\xi) - \beta_{k}(\eta) |}{ |\xi - \eta|^\alpha} \sum_{n=0}^{\INF}| g_{-n-k}(\xi)|\\
            &\quad+ \sum_{k=0}^{\INF} |\beta_{k}(\eta)|\sum_{n=0}^{\INF}
\frac{|g_{-n-k}(\xi) - g_{-n-k}(\eta)|}{|\xi - \eta|^\alpha}\\
& \leq
c_{3} \kappa_{2} + c_{2} \kappa_{3},
\end{align*}
we obtain that $\bv \in C^{\alpha} (\ol\OM ; l^{1})$.

Last we prove part (iii).

 Since $a \in C_0^{1,\alpha}(\ol\OM)$, it follows from Proposition \ref{hprop} that $e^{h} \in C^{1,\alpha}(\Gam \times \sph) \subset C^{\alpha}(\Gam;C^{\alpha}(\sph))$,
 and from \eqref{betaLNorm} and \eqref{betaCalpNorm}, we have
\begin{align*}
c_{4}:= \sup_{\xi \in \Gam} \sum_{k=0}^{\INF}|\beta_{k}(\xi)| < \INF,\;
\kappa_{4}:= \underset{{\substack{
            \xi,\eta \in \Gam \\
            \xi \neq \eta } }}{\sup} \sum_{k=0}^{\INF}\, \frac{ \ds \lvert \beta_{k}(\xi) - \beta_{k}(\eta) \rvert}{ \ds \lvert \xi - \eta \rvert^{\alpha}}< \INF.
\end{align*}

Let $\bg \in Y_{\alpha},$ and let $\bv = \Pplus (e^h)\ast_{n} \bg$ be as before.

Since $\bg \in Y_{\alpha}$, we have
\begin{align*}
c_{5}&:= \sup_{\xi \in \Gam} \sum_{j=1}^{\INF}  j^2 \lvert g_{-j}(w) \rvert < \INF, \;
\kappa_{5}&:= \underset{{\substack{
            \xi, \mu  \in \Gam \\
            \xi \neq \mu } }}{\sup} \sum_{j=1}^{\INF} j\frac{\lvert g_{-j}(\xi) - g_{-j}(\mu) \rvert}{\lvert \xi - \mu \rvert^{\alpha}} < \INF.
\end{align*}

By taking the supremum in $w \in \Gam$ in
\begin{align*}
\sum_{j=1}^{\INF}j^2 \lvert v_{-j}(w) \rvert
& \leq  \sum_{j=1}^{\INF} j^{2}\sum_{k=0}^{\INF} |\beta_{k}(w)| \lvert g_{-j-k}(w) \rvert  \\
& \leq \sum_{k=0}^{\INF} |\beta_{k}(w)| \sum_{j=1}^{\INF} j^{2}\lvert g_{-j-k}(w) \rvert  \\
& \leq \sum_{k=0}^{\INF} |\beta_{k}(w)| \sum_{j=1}^{\INF} j^{2}\lvert g_{-j}(w) \rvert  \\
& \leq c_{4} c_{5},
\end{align*} we obtain that $\bv \in l^{1,2}_{\INF}(\Gamma)$.

Finally we show that $\bv $ obeys the estimate in \eqref{YGamdefn}.
By taking the supremum in $\xi,\mu\in\Gam$ with $\xi\neq\mu$ in
\begin{align*}
\sum_{j=1}^{\INF}&
            \frac{j\lvert v_{-j}(\xi) - v_{-j}(\mu) \rvert}{\lvert \xi - \mu \rvert^\alpha}
\\
&\leq
\sum_{j=1}^{\INF} \frac{j}{\lvert \xi - \mu \rvert^\alpha}
  \sum_{k=0}^{\INF} \lvert \beta_{k}(\xi)\;g_{-j-k}(\xi) - \beta_{k}(\mu)\;g_{-j-k}(\mu) \rvert \\
&\leq
\sum_{j=1}^{\INF} j
  \sum_{k=0}^{\INF} \frac{\lvert \beta_{k}(\xi) - \beta_{k}(\mu)\rvert}{\lvert \xi - \mu \rvert^\alpha} |g_{-j-k}(\xi)| \\
&\qquad+
\sum_{j=1}^{\INF} j
            \sum_{k=0}^{\INF} \frac{\lvert g_{-j-k}(\xi) - g_{-j-k}(\mu)\rvert}{\lvert \xi - \mu \rvert^\alpha} |\beta_{k}(\mu)|   \\
            &\leq
\sum_{k=0}^{\INF}  \frac{\lvert \beta_{k}(\xi) - \beta_{k}(\mu)\rvert}{\lvert \xi - \mu \rvert^\alpha}
  \sum_{j=1}^{\INF}j |g_{-j}(\xi)| \\
&\qquad+ \sum_{k=0}^{\INF} |\beta_{k}(\mu)|
\sum_{j=1}^{\INF} \frac{j\lvert g_{-j}(\xi) - g_{-j}(\mu)\rvert}{\lvert \xi - \mu \rvert^\alpha}  \\
&\leq
\kappa_{4} c_{5} + c_{4} \; \kappa_{5},
\end{align*} we obtain that $\bv \in Y_{\alpha}$.
\end{proof}

Recall the Hilbert transform  $\HT_0$ in Definition \ref{hilbertT_definition}, $\mathcal{P}_{\pm}$ in \eqref{Pminusplus}, and $e^{\pm h}$ in \eqref{ehEq}.
\begin{definition}\label{hilbertAtt_definition}
The Hilbert transform associated with the attenuated Radon transform for $g\in C^{1,\alpha}(\Gam \times \sph)$  is given by
\begin{align}\label{hilbertAT}
 \HAT (\Pminus (g)) :=  \Pplus(e^h) \ast_{n}  \HT_0 \left ( \Pplus(e^{-h}) \ast_{n} \Pminus  (g) \right)
\end{align}
where $\ast_{n}$ is the convolution operator on sequences.
\end{definition}

Using the Fourier coefficients of $e^{\pm h}$, we can also write for $\bu: = \langle u_{0},u_{-1},u_{-2},... \rangle$, the Hilbert transform as
\begin{align*}\label{hilbertATAlt}
 \HAT \bu :=   \sum_{m=0}^{\INF} \beta_{m} \mathcal{L}^{m} \left ( \HT_0 \left ( \sum_{k=0}^{\INF} \alpha_{k} \mathcal{L} ^{k} \right ) \right ) \bu
\end{align*}
where $\mathcal{L}$ is the left translation operator and  $\alpha_{k}, \beta_{k}$ are the Fourier coefficients of $e^{-h(x,\tta)}$, respectively, $e^{h(x,\tta)}$ as in \eqref{ehEq}.

The following result describes the mapping properties of the Hilbert transform $\HAT$ needed later.
\begin{prop}\label{HAttproperties}
Let $l^{1,1}_{\INF}(\Gam)$ and $C^{\ds \eps}(\Gamma ; l_1)$ be the spaces in \eqref{lGamdefn} and \eqref{CepsGamdefn} respectively.
Assume $a \in C_{0}^{1,\alpha}(\ol\OM)$ with $\alpha>1/2$, and $\eps>0$ be arbitrarily small. Then
\begin{equation}
\HAT :  C^{\eps}(\Gam ; l_{1})\cap l^{1,1}_{\INF}(\Gam) \longrightarrow C^{ \eps}\left( \Gamma; l_{\INF} \right).
\end{equation}

\end{prop}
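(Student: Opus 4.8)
The plan is to exploit the factored form of $\HAT$ from Definition \ref{hilbertAtt_definition},
\[
\HAT\bu=\Pplus(e^h)\ast_n\HT_0\bigl(\Pplus(e^{-h})\ast_n\bu\bigr),
\]
and to read it as a composition of three maps: convolution by $\Pplus(e^{-h})$, the transform $\HT_0$, and convolution by $\Pplus(e^h)$. The only mapping property of $\HT_0$ available at this regularity is Proposition \ref{Hproperties}, namely $\HT_0:C^\eps(\Gam;l_1)\cap l^{1,1}_\INF(\Gam)\to C^\eps(\Gam;l_\INF)$. Hence it suffices to prove (a) that the inner convolution by $\Pplus(e^{-h})$ maps the domain $C^\eps(\Gam;l_1)\cap l^{1,1}_\INF(\Gam)$ into itself, so that $\HT_0$ applies, and (b) that the outer convolution by $\Pplus(e^h)$ maps the target $C^\eps(\Gam;l_\INF)$ into itself. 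Both are the $\Gam$-versions, with the smaller exponent $\eps\le\alpha$, of estimates already established over $\ol\OM$ in Proposition \ref{ehConvProp}. The required input is that, by Proposition \ref{hprop}, $h\in C^{1,\alpha}(\ol\OM\times\sph)$ restricts to $h\in C^{1,\alpha}(\Gam\times\sph)$, so the computation \eqref{betaLNorm}--\eqref{betaCalpNorm} repeated on $\Gam$ gives $\Pplus(e^{\pm h})\in C^\alpha(\Gam;l^1)\subset C^\eps(\Gam;l^1)$.

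For (b), and for the H\"older-in-$l_1$ half of (a), the arguments are verbatim those of Proposition \ref{ehConvProp} parts (i) and (ii), with $\ol\OM$ replaced by $\Gam$ and $\alpha$ by $\eps$. Writing the intermediate sequence $\bv:=\Pplus(e^{-h})\ast_n\bu$ componentwise as $v_{-j}=\sum_{k=0}^\INF\alpha_k\,u_{-j-k}$ with $\alpha_k$ the Fourier coefficients of $e^{-h}$ in \eqref{ehEq}, one bounds $\sup_\xi\|\bv(\xi)\|_{l_1}$ and the $\eps$-H\"older seminorm of $\bv$ by products of $\sup_\xi\|\Pplus(e^{-h})(\xi)\|_{l^1}$, its H\"older seminorm, and the corresponding quantities for $\bu$, exactly as in \eqref{Estgl1} and the display following it. The Young-type bound $l^1\ast l_\INF\subset l_\INF$ disposes of (b) in the same manner.

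The genuinely new point, and the main thing to verify, is that the inner convolution preserves the weighted summability defining $l^{1,1}_\INF(\Gam)$. Using the componentwise formula and a reindexing in the spirit of Lemma \ref{seqresult}, I would estimate, uniformly in $\xi\in\Gam$,
\[
\sum_{j=1}^\INF j\,|v_{-j}(\xi)|\le\sum_{k=0}^\INF|\alpha_k(\xi)|\sum_{j=1}^\INF j\,|u_{-j-k}(\xi)|\le\Bigl(\sum_{k=0}^\INF|\alpha_k(\xi)|\Bigr)\Bigl(\sum_{i=1}^\INF i\,|u_{-i}(\xi)|\Bigr),
\]
where in the last step the shift $i=j+k$ gives $j=i-k\le i$, so $\sum_{j\ge1}j\,|u_{-j-k}|\le\sum_{i\ge1}i\,|u_{-i}|$ for every $k\ge0$. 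Taking the supremum over $\xi$ and using $\Pplus(e^{-h})\in C^\alpha(\Gam;l^1)$ together with $\bu\in l^{1,1}_\INF(\Gam)$ shows $\bv\in l^{1,1}_\INF(\Gam)$. Combining (a), Proposition \ref{Hproperties}, and (b) then yields $\HAT\bu\in C^\eps(\Gam;l_\INF)$, proving the stated mapping property.

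I expect the main obstacle to be bookkeeping rather than any essential difficulty: one must check that the \emph{intermediate} sequence $\bv$ lands in the \emph{full} domain $C^\eps(\Gam;l_1)\cap l^{1,1}_\INF(\Gam)$ of $\HT_0$ --- both the H\"older--$l_1$ regularity and the weighted summability --- since dropping either would leave $\HT_0\bv$ undefined at this level of regularity. The weighted estimate above is precisely what secures the $l^{1,1}_\INF$ half, and it is the only place where the convolution structure, rather than a mere $l^1\ast l^1$ bound, is used.
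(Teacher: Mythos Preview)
Your proof is correct and, in fact, somewhat more complete than the paper's own argument. The paper takes a shortcut by assuming from the outset that the input sequence is of the form $\Pminus g$ for a function $g\in C^{1,\alpha}(\Gam\times\sph)$; it then multiplies $e^{-h}$ and $g$ \emph{as functions} on $\Gam\times\sph$, observes that $e^{-h}g\in C^\eps(\Gam;C^{1,\alpha}(\sph))$, and invokes Proposition~\ref{gCepsgbarXY}(i) (ultimately Bernstein's lemma) to obtain directly $\Pminus(e^{-h}g)=\Pplus(e^{-h})\ast_n\Pminus g\in l^{1,1}_\INF(\Gam)\cap C^\eps(\Gam;l^1)$. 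This bypasses any sequence-level convolution estimate for the inner factor, but strictly speaking only covers sequences arising as $\Pminus g$ for smooth $g$ --- which is all that is needed in the applications to Theorems~\ref{Th3} and~\ref{AttRTTh}.

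Your approach, by contrast, works entirely at the sequence level and therefore establishes the mapping property on the full domain $C^\eps(\Gam;l_1)\cap l^{1,1}_\INF(\Gam)$ as literally stated. The weighted-summability estimate you isolate --- that $\Pplus(e^{-h})\ast_n(\cdot)$ preserves $l^{1,1}_\INF(\Gam)$ via the shift bound $\sum_{j\ge1}j\,|u_{-j-k}|\le\sum_{i\ge1}i\,|u_{-i}|$ --- is precisely the ingredient not already packaged in Proposition~\ref{ehConvProp}, and it is the genuine content your route adds. Both arguments then finish the same way: apply Proposition~\ref{Hproperties} for $\HT_0$, and use the $l^1\ast l_\INF$ bound (the $\Gam$-version of Proposition~\ref{ehConvProp}(i)) for the outer convolution by $\Pplus(e^h)$.
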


\begin{proof}
Let $g \in C^{1,\alpha}(\Gam \times \sph) \subset C^{ \eps}(\Gam ; C^{1,\alpha}(\sph)) $, then by Proposition \ref{gCepsgbarXY}(i), $\Pminus g \in l^{1,1}_{\INF}(\Gam)\cap C^\eps(\Gamma;l^1)$. Since $a \in C_0^{1,\alpha}(\ol\OM)$, it follows from Proposition \ref{hprop} that $e^{\pm h} \in C^{\ds \eps}(\Gam ; C^{1,\alpha}(\sph))$.

Since $e^{-h} g \in C^{\ds \eps}(\Gam ; C^{1,\alpha}(\sph))$, it follows from
Proposition \ref{gCepsgbarXY} (i) that $\Pminus (e^{-h}g) \in l^{1,1}_{\INF}(\Gam)\cap C^\eps(\Gamma;l^1)$.
By \eqref{PmpsProp}, $\Pminus (e^{-h}g) = (\Pplus e^{-h}) \ast_{n} (\Pminus (g)) $ and so by Proposition \ref{Hproperties}, $ \HT_0 \left ( \Pplus(e^h) \ast_{n} \Pminus (g) \right) \in C^{\ds \eps}(\Gam ; l_{\INF})$.
Finally by Proposition \ref{ehConvProp} (ii), $\Pplus(e^h) \ast_{n}  \HT_0 \left ( \Pplus(e^h) \ast_{n} \Pminus  (g) \right) \in C^{\ds \eps} (\Gam ; l_{\INF}) $.
\end{proof}

Now we are able to state and prove our main result.

\begin{theorem}[Range characterization for the Attenuated Radon transform]\label{AttRTTh}

Let $\OM\subset{R}^2$ be a domain with $C^2$ boundary $\Gamma$ of strictly positive curvature, and $a \in C_0^{1,\alpha}(\ol\OM)$, $\alpha>1/2$ be real valued.

(i) Let $f \in C_0^{1,\alpha}(\ol\OM)$ be real valued. Then $R_{a}f\cap C^{\alpha}(\Gam;C^{1,\alpha}(\sph)) \neq \emptyset$, and if
$g \in R_{a}f\cap C^{\alpha}(\Gam;C^{1,\alpha}(\sph))$, its projection $\Pminus (g)$ must solve
\begin{align}\label{AttRTThCond}
[I+i\HAT]\Pminus (g)=0,
\end{align}with the Hilbert transform $\HAT$ defined in \eqref{hilbertAT}.

(ii) Let $g\in C^{\alpha} \left(\Gam; C^{1,\alpha}(\sph) \right)\cap C^0(\Gam;C^{2,\alpha}(\sph))$ be real valued with the projection 
$\Pminus (g)$ satisfying \eqref{AttRTThCond}. Then there exists a real valued $f \in C^\alpha(\OM)\cap L^1(\OM)$ for which $g\in R_af$.
\end{theorem}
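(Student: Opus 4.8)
The plan is to reduce both parts to the already-established non-attenuated results (Theorems \ref{NecSuf}, \ref{Th3} and Corollary \ref{CorTh2NecY}) via the integrating factor $e^{-h}$ of \eqref{hDefn}. Two facts drive the reduction. First, since $h$ differs from $Da$ only by a term that is constant along the line in direction $\tta$, one has $\tta\cdot\nabla h=-a$, whence the product rule gives the integrating-factor identity $\tta\cdot\nabla(e^{-h}u)=e^{-h}(\tta\cdot\nabla u+au)$. Second, $e^{\pm h}$ have only nonnegative Fourier modes (they extend analytically into the disk), so $e^{-h}f$ has only nonnegative modes when $f=f(z)$. Combining these, if $u$ solves \eqref{eq:TEq2} then $w:=e^{-h}u$ solves $\tta\cdot\nabla w=e^{-h}f$, whose mode $m$ vanishes for $m<0$; reading off $(\tta\cdot\nabla w)_{n-1}=\dba w_n+\del w_{n-2}$ for $n\le 0$ shows the nonpositive-mode sequence $\langle w_0,w_{-1},\dots\rangle$ is A-analytic in the sense of \eqref{Aanalytic}, with boundary trace $\Pminus(e^{-h}g)=\Pplus(e^{-h})\ast_n\Pminus(g)$ by \eqref{PmpsProp}.

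For the necessity in (i) I would first produce a sample datum as in Corollary \ref{nonempty}, replacing the integrand in \eqref{sampleData} by $f(z+t\tta)e^{-Da(z+t\tta,\tta)}$; since $Da\in C^{1,\alpha}(\ol\OM\times\sph)$ (from the proof of Proposition \ref{hprop}), the same analysis of the jump of $\del_\fii\tau$ across $Z$ in \eqref{variety}, together with $f|_\Gamma=0$, yields a representative in $R_af\cap C^{1,\alpha}(\Gam\times\sph)$, giving nonemptiness. Given any $g\in R_af\cap C^{\alpha}(\Gam;C^{1,\alpha}(\sph))$, the datum $\bg_h:=\Pminus(e^{-h}g)$ lies in $l^{1,1}_{\INF}(\Gamma)\cap C^{\alpha}(\Gamma;l^1)$ by Proposition \ref{gCepsgbarXY}(i) (using $e^{-h}g\in C^{\alpha}(\Gam;C^{1,\alpha}(\sph))$), and it is the trace of the A-analytic map above, so Theorem \ref{NecSuf} gives $[I+i\HT_0]\bg_h=0$. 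Convolving this with $\Pplus(e^{h})$ and using $\Pplus(e^{h})\ast_n\Pplus(e^{-h})=\langle 1,0,0,\dots\rangle$ (from \eqref{alphabetaEq}) turns the first term into $\Pminus(g)$ and, by Definition \ref{hilbertAtt_definition}, the second into $i\HAT\Pminus(g)$, yielding \eqref{AttRTThCond}.

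For the sufficiency in (ii) I would run this backwards. From $g\in C^{\alpha}(\Gam;C^{1,\alpha}(\sph))\cap C^0(\Gam;C^{2,\alpha}(\sph))$, Proposition \ref{gCepsgbarXY}(ii) gives $\Pminus(g)\in Y_{\alpha}$, and Proposition \ref{ehConvProp}(iii) gives $\bg_h:=\Pplus(e^{-h})\ast_n\Pminus(g)\in Y_{\alpha}$; note this is the clean way to obtain the $Y_\alpha$ regularity, since $e^{-h}g$ need not be $C^{2,\alpha}$ in $\fii$. Convolving \eqref{AttRTThCond} with $\Pplus(e^{-h})$ and using the same convolution identity reduces it to $[I+i\HT_0]\bg_h=0$. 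Corollary \ref{CorTh2NecY} then yields an A-analytic $\bv=\langle w_0,w_{-1},\dots\rangle\in C^{1,\eps}(\OM;l^1)\cap C^{\eps}(\ol\OM;l^1)$ with $\bv|_\Gamma=\bg_h$. I set $\bu:=\Pplus(e^{h})\ast_n\bv$ (in the right space by Proposition \ref{ehConvProp}), so that $\bu|_\Gamma=\Pplus(e^{h})\ast_n\bg_h=\Pminus(g)$, and define the real function $u:=\Pstar(\bu)$, whose nonpositive modes are $\bu$ and whose trace is $\Pstar(\Pminus(g))=g$ since $g$ is real. The candidate source is $f(z):=2\re(\del u_{-1}(z))+a(z)u_0(z)$.

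The final step, verifying $\tta\cdot\nabla u+au=f(z)$, is the crux, and is where the structure of $h$ and the conjugate symmetry must be combined carefully. For $m\le -1$ the quantity $(\tta\cdot\nabla u+au)_m=\dba u_{m+1}+\del u_{m-1}+au_m$ depends only on the nonpositive modes $u_{m+1},u_{m-1},u_m$, which equal the corresponding modes of $e^{h}w$ for any $C^1$ function $w$ whose nonpositive-mode sequence is $\bv$ (because $e^{h}$ has only nonnegative modes); since A-analyticity of $\bv$ gives $(\tta\cdot\nabla w)_n=\dba w_{n+1}+\del w_{n-1}=0$ for $n\le -1$, and $e^h$ has only nonnegative modes, the identity $\tta\cdot\nabla(e^{h}w)+a e^{h}w=e^{h}(\tta\cdot\nabla w)$ forces $(\tta\cdot\nabla u+au)_m=0$ for $m\le -1$. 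The mode $m=0$ gives $\dba u_1+\del u_{-1}+au_0=2\re(\del u_{-1})+au_0=f$ via $u_1=\ol{u_{-1}}$, and the modes $m\ge 1$ vanish because $u$ is real; hence $\tta\cdot\nabla u+au=f(z)$. Finally Proposition \ref{gCepsgbarXY}(iii) and Proposition \ref{ehConvProp} give $f\in C^{\alpha}(\OM)$, and integrating $e^{-Da}f=\tta\cdot\nabla(e^{-Da}u)$ along chords (using $Da(x^+_\theta,\tta)=0$ and $u|_{\Gam\times\sph}=g$) reproduces \eqref{Radon_definition}, so $f\in L^1(\OM)$ and $g\in R_af$.
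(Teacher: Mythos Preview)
Your proof is correct and follows the same reduction as the paper: pass to $e^{-h}u$ to obtain an A-analytic sequence, invoke Theorem \ref{NecSuf}/Corollary \ref{CorTh2NecY} on $\Pminus(e^{-h}g)$, and translate back via the convolution identities $\Pplus(e^{\pm h})\ast_n(\cdot)$ together with \eqref{alphabetaEq}. The one place you go beyond the paper is in part (ii): the paper simply writes $f(z):=\tta\cdot\nabla u+au$ and proceeds, whereas you give the explicit formula $f=2\re(\del u_{-1})+au_0$ and verify mode by mode (using that $(\tta\cdot\nabla u+au)_m$ for $m\le -1$ depends only on nonpositive modes of $u$, hence agrees with the $m$-th mode of $e^h(\tta\cdot\nabla w)=0$) that the right-hand side is genuinely $\tta$-independent; this is the substantive step and your treatment of it is cleaner.
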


\begin{proof} (i) For $z\in\OM$ and $\theta\in S^1$, let $u(z,\tta)$ be the solution of
\begin{align}\label{advectUGamMinus}
    \tta\cdot\nabla u(z,\tta) +a(z) u(z,\theta) &= f(z), \quad  (z,\tta)\in \OM \times S^{1}, \\ \nn
 u(z,\tta) &= 0, \quad  (z,\tta) \in \Gam_{-},
\end{align} namely $u(z+t\tta,\tta) = \ds \int_{0}^{t}f(z+s\theta)\, e^{-Da(z+s\theta,\theta)}ds$, for $(z,\tta)\in \Gam_{-}$ and $0 \leq t \leq \tau_{+}(z,\theta)$, where $\Gam_{\pm} = \left \{ (z,\tta) \in \Gam \times \sph : \pm \bn(z)\cdot \tta > 0\right \}$ denote the incoming(-), respectively, outgoing (+) boundary and $n(z)$ denotes the outer normal at some boundary point $z$.

Let $g(z,\tta) := \ds u(z,\tta) \lvert_{\Gam \times \sph}$.
Note that $\Gam \times \sph = \Gam_{-} \cup \Gam_{+} \cup Z $, where $Z$ is the variety in \eqref{variety}.
Since $g(z,\tta) = 0$ for $(z,\tta) \in \Gam_{-} \cup Z$ and $g(z,\tta) = \ds \int_{0}^{\tau_{+}(z,\theta)}f(z+s\theta)\, e^{-Da(z+s\theta,\theta)}ds$, for  $(z,\tta) \in \Gam_{+}$, it follows that $g$ satisfies \eqref{Radon_definition} and thus $ g \in R_{a}f$.

Since $a \in C_0^{1,\alpha}(\ol\OM)$, it follows from Proposition \ref{hprop} that $e^{-Da} \in C^{1,\alpha}(\ol \OM\times\sph)$ and so $fe^{-Da} \in C^{1,\alpha}_{0}(\ol \OM\times\sph) \subset C^{ \alpha}(\ol \OM;C^{1,\alpha}(\sph)) $. The proof of Corollary \ref{nonempty} applied to $fe^{-Da}$ shows that $g \in C^{1,\alpha}(\Gam\times\sph) $ and therefore $ g \in R_{a}f\cap C^{ \alpha}(\Gam;C^{1,\alpha}(\sph))$.

For $z\in \ol \OM$ and $\theta\in S^1$, if we let
\begin{align}  \label{vNuEq}
v(z,\tta) := e^{-h(z,\tta)}u(z,\tta),
\end{align}
where $u(z,\tta)$ solves \eqref{advectUGamMinus} with
$ \ds u(z,\tta) \lvert_{\Gam \times \sph} = g(z,\tta) $, and $e^{-h(z,\tta)}$ as in \eqref{ehEq} then $v(z,\tta)$ solves
\begin{align}\label{advectionVwithg}
 \tta\cdot\nabla v(z,\tta)  &= f(z)  e^{-h(z,\tta)} \quad (z,\tta) \in \OM \times \sph \\ \nn
  v \lvert_{\Gam \times \sph} &= g \; e^{-h} \lvert_{\Gam \times \sph}
\end{align}
If $\bv:=\langle v_0,v_{-1},v_{-2},...\rangle$ is the projection on the non-positive Fourier coeficients of $\ds \sum_{n=-\INF}^{\INF} v_{n}(x) e^{in\fii}$ then the equation \eqref{advectionVwithg} yields for each $n=0,-1,-2,...$
\begin{align*}
\ol{\del} v_{n}(z) + \del v_{n-2}(z) = 0, \quad z \in \OM.
\end{align*}This makes $\bv:=\langle v_0,v_{-1},v_{-2},...\rangle$ be A-analytic.

The convolution applied to \eqref{vNuEq} rewrites $\bv$ as
\begin{align}\label{BVinBU}
\bv(z) = \Pplus (e^{-h(z,\tta)}) \ast_{n} \Pminus (u(z,\tta)), \quad (z,\tta) \in \OM \times \sph.
\end{align}

Since $a \in C_0^{1,\alpha}(\ol\OM)$ and $ g \in C^{ \alpha}(\Gam;C^{1,\alpha}(\sph))$, we have from Proposition \ref{hprop}, $e^{-h} g \in C^{\alpha}(\Gam;C^{1,\alpha}(\sph))$. Hence, by Proposition \ref{gCepsgbarXY} (i), $\Pminus( e^{-h} g ) \in l^{1,1}_{\INF}(\Gam)\cap C^{\alpha}(\Gamma,l^1)$.

Since $\Pminus( e^{-h}g) $ is the boundary value of the A-analytic function $\bv$, we can apply necessity part in Theorem \ref{NecSuf} to conclude that
\begin{align}\label{RTThCond}
(I+i\HT_0)\Pminus (g \; e^{-h} \lvert_{\Gam \times \sph} )=0.
\end{align}
The convolution of \eqref{RTThCond} by $\Pplus (e^h)$ yields
\begin{align*}
0 &= \Pplus (e^h) \ast_{n} (I+i\HT_0)\Pminus (e^{-h}g), \\ \nn
&=\Pplus (e^h) \ast_{n} \Pminus (e^{-h}g)  +i \Pplus (e^h) \ast_{n} \HT_0\Pminus (e^{-h}g), \\ \nn
&=\Pminus (g) + i \HAT \Pminus (g), \\ \nn
&= [I+i\HAT] \Pminus (g) .
\end{align*}
In the third equality above we use \eqref{PmpsProp} to simplify $$\Pplus (e^h) \ast_{n} \Pminus (e^{-h}g) = \Pminus (e^{h}e^{-h}g) = \Pminus (g),$$ and Definition \ref{hilbertAtt_definition} of $\HAT$ to obtain $$\Pplus(e^h) \ast_{n}  \HT_0 \left ( \Pplus(e^h) \ast_{n} \Pminus  (g) \right) = \HAT \Pminus (g).$$

Conversely, let $g\in C^{\alpha} \left(\Gam; C^{1,\alpha}(\sph) \right)\cap C^0(\Gam;C^{2,\alpha}(\sph))$ be real valued and such that $\Pminus (g)$ satisfies \eqref{AttRTThCond}. Then by Proposition \ref{gCepsgbarXY} (ii), we have $\Pminus (g)\in Y_{\alpha}$.
Since $a \in C_0^{1,\alpha}(\ol\OM)$, it follows from Propositions \ref{hprop} and \ref{gCepsgbarXY}(i), that $\Pplus (e^{h}) \in l^{1,1}_{\INF}(\Gam)\cap C^\alpha(\Gamma;l^1)$.
Finally we apply Proposition \ref{ehConvProp}(iv) to yield
$\Pminus( e^{-h} g ) \in Y_{\alpha}$.
From $\Pminus (g)$ satisfying \eqref{AttRTThCond} we have
\begin{align}\label{ARTThCond}
0 =[I+i\HAT] \Pminus (g) =\Pminus (g) + i \HAT \Pminus (g).
\end{align}
The convolution of \eqref{ARTThCond} by $\Pplus (e^{-h})$ yields
\begin{align*}
0 &=\Pplus (e^{-h}) \ast_{n} \left ( \Pminus (g) + i \HAT \Pminus (g) \right ), \\ \nn
&=\Pplus (e^{-h}) \ast_{n} \Pminus (g)  + i \Pplus (e^{-h}) \ast_{n} \HAT \Pminus (g), \\ \nn
&=  \Pminus (e^{-h}g) +i \Pplus (e^{-h}) \ast_{n} \Pplus (e^{h}) \ast_{n} \HT_0\Pminus (e^{-h}g), \\ \nn
&=  \Pminus (e^{-h}g) +i \Pplus (1) \ast_{n} \HT_0\Pminus (e^{-h}g), \\ \nn
&=  \Pminus (e^{-h}g) +i \HT_0\Pminus (e^{-h}g), \\ \nn
&= [I+i\HT_0] \Pminus (e^{-h}g).
\end{align*}
In the third equality above we use the Proposition \ref{ehConvProp} part(iii), to simplify $\Pplus (e^{-h}) \ast_{n} \Pminus (g) = \Pminus (e^{-h}g) $, and Definition \ref{hilbertAtt_definition} of $\HAT$. In the fourth equality above we use $\Pplus (e^{-h}) \ast_{n} \Pplus (e^{h}) = \Pplus(1) := \langle 1,0,0,\cdots \rangle$, and the fact that
$\Pplus(1)$ is the identity element for convolution in sequences to conclude
$\Pplus (1) \ast_{n} \HT_0\Pminus (e^{-h}g) = \HT_0\Pminus (e^{-h}g)$.

For each $z \in \OM$, construct the vector valued function
$\bv = \langle v_0, v_{-1}, v_{-2}, ...\rangle $ by the Cauchy Integral formula \eqref{vnDefn}:
\[v_{n} (z)= \frac{1}{2}(G \bg)_{n}(z)+ (C\bg)_{n}(z), n=0,-1,-2...,\] where $\bg:= \Pminus (e^{-h}g)$.
By the Corollary \ref{CorTh2NecY}, $\bv \in C^{1,\eps}(\OM;l_1) \cap C^{\eps}(\ol \OM;l_1)$ is $A$-analytic and $\bv|_{\Gam}=\bg$.

Construct the vector valued function
$\bu: = \langle u_{0},u_{-1},u_{-2},... \rangle$ from $\bv$ by the convolution formula $\bu(z) = \Pplus (e^{h(z,\cdot)}) \ast_{n} \bv(z)$ for $(z,\cdot) \in \OM \times \sph$. By the Proposition \ref{ehConvProp}(ii) we have $\bu \in C^{\alpha}(\ol \OM; l_{1})$ and by Proposition \ref{gCepsgbarXY}(iii) we have $u(z,\tta) := \Pstar (\bu(z)) \in C^{1,\alpha}(\OM \times \sph) \cap C^{\alpha}(\ol \OM \times \sph)$. Note that
\begin{align*}
\Pminus (u \lvert_{\Gam \times \sph}) &=\Pplus (e^{h} \lvert_{\Gam \times \sph} ) \ast_{n} \bv \lvert_{\Gam}, \\ \nn
&=\Pplus (e^{h} \lvert_{\Gam \times \sph} ) \ast_{n} \Pminus (\ds e^{-h} \lvert_{\Gam \times \sph} \; g) , \\ \nn
&=  \Pminus (g)
\end{align*}
Taking $\Pstar$ on both sides of the above equation and using the fact that $u$ and $g$ are real valued yields
$u \lvert_{\Gam\times \sph} = g$.

We define the H\"{o}lder continuous function $f\in C^\alpha(\OM)$ by
\begin{equation}\label{fDefn}
f(z) := \tta\cdot\nabla u(z,\tta) +a(z) u(z,\tta) ,\quad (z,\tta)\in\OM \times \sph,
\end{equation}
and show that $f$ integrates along any line and that $g\in R_af$.

Since $e^{-Da}$ in \eqref{divbeam} is an integrating factor, the equation \eqref{fDefn} can be rewritten in the advection form as
\begin{align*}
\ds f(z)  e^{-Da(z,\tta)} = \tta\cdot\nabla \left( e^{-Da(z,\tta)} u(z,\tta) \right ) .
\end{align*} and integrated along lines in direction $\tta$ to obtain
\begin{align*}
\ds \int_{\tau_-(x,\theta)}^{\tau_+(x,\theta)} f(x+t\theta)e^{-Da(x+t\theta,\theta)}dt
&= \ds \left. e^{-Da(z+t\tta,\tta)} u(z+t\tta,\tta) \ds  \right
\lvert_{\tau_{-}(z,\theta)}^{\tau_{+}(z,\theta)} \\ \nn
&= \ds  e^{-Da(z^{+}_{\tta},\tta)} u(z^{+}_{\tta},\tta)  - e^{-Da(z^{-}_{\tta},\tta)} u(z^{-}_{\tta},\tta) \ds \\ \nn
&= g(x^+_\theta,\theta)-\left[e^{-Da} g\right](x^-_\theta,\theta),
\end{align*} where the notation $z^\pm_\theta=z\pm\tau_\pm(z,\theta)\theta$ as in \eqref{xthetapm}.
This shows that $f$ integrates along any arbitrary line, in particular $f\in L^1(\OM)$, and that $g\in R_{a}f$.
\end{proof}



\begin{thebibliography}{10}
\bibitem{ABK}
E.~V. Arzubov, A.~L. Bukhgeim and S.G. Kazantsev,
\textit{Two-dimensional tomography problems and the theory of
A-analytic functions}, Siberian Adv. Math. \textbf{8}(1998),
1--20.

\bibitem{bal} G. Bal, \textit{On the attenuated Radon transform with full and
partial measurements}, Inverse Problems \textbf{20}(2004),
399--418.

\bibitem{bomanStromberg} J. Boman and J.-O. Str\"omberg, \textit{Novikov's inversion formula
for the attenuated Radon transform--a new approach}, J. Geom.
Anal. \textbf{14}(2004), 185--198.


\bibitem{bukhgeim_book}
A.~L. Bukhgeim, \textit{\em Inversion Formulas in Inverse
Problems}, in Linear Operators and Ill-Posed Problems by M. M.
Lavrentev and L. Ya.  Savalev, Plenum, New York, 1995.

\bibitem{finch} D. V. Finch, \textit{The attenuated x-ray transform: recent
developments}, in Inside out: inverse problems and applications,
Math. Sci. Res. Inst. Publ., 47, Cambridge Univ. Press, Cambridge,
2003, 47--66.

\bibitem{gelfandGraev} I. M. Gelfand and M.I. Graev, \textit{Integrals over hyperplanes of basic and generalized functions}, Dokl. Akad. Nauk SSSR {\bf135} (1960), no.6, 1307-1310; English transl., Soviet Math. Dokl. {\bf 1} (1960), 1369-1372.

\bibitem{helgason} S. Helgason, \textit{An analogue of the Paley-Wiener theorem for the Fourier transform on certain symmetric spaces},
Math. Ann. {\bf 165} (1966), 297�308.


\bibitem{helgasonBook} S. Helgason, The Radon Transform, Birkh\"{a}user, Boston, 1999.

\bibitem{katznelson} Y. Katznelson, \textit{An introduction to harmonic
analysis}, Cambridge Math. Lib., Cambridge, 2004.



\bibitem{ludwig} D. Ludwig, \textit{The Radon transform on euclidean space}, Comm. Pure Appl. Math. {\bf19} (1966), 49�81.

\bibitem{muskhellishvili} N.I. Muskhelishvili, Singular Integral Equations, Dover, New York, 2008.

\bibitem{naterrerBook} F. Natterer, \textit{The mathematics of computerized
tomography}, Wiley, New York, 1986.

\bibitem{naterrer01} F. Natterer, \textit{Inversion of the Attenuated Radon
transform}, Inverse Problems \textbf{17}(2001), 113-119.

\bibitem{novikov01} R. G. Novikov, \textit{Une formule d'inversion pour la
transformation d'un rayonnement X att�nu�},  C. R. Acad. Sci.
Paris S�r. I Math., \textbf{332}(2001),  1059--1063.

\bibitem{novikov02} R. G. Novikov, \textit{On the range characterization for the two-dimensional attenuated x-ray transformation},
Inverse Problems {\bf 18} (2002), no. 3, 677�700.

\bibitem{paternainUhlmann} G. P. Paternain, M. Salo, and G. Uhlmann, \textit{On the attenuated Ray transform for unitary connections}, arXiv:1302.4880v1, 2013.

\bibitem{pestovUhlmann} L. Pestov and G. Uhlmann, \textit{On characterization of the range and inversion formulas for the geodesic X-ray transform}, Int. Math. Res. Not. {\bf 80} (2004), 4331�4347.



\bibitem{tamasan02}
A.~Tamasan, \textit{An inverse boundary value problem in two-dimensional transport}, Inverse Problems \textbf{18}(2002), 209--219.

\bibitem{tamasan03}
A.~Tamasan, \textit{Optical tomography in weakly anisotropic
scattering media}, Contemporary Mathematics \textbf{333}(2003),
199--207.
\bibitem{tamasan07} A. Tamasan, \textit{Tomographic reconstruction of vector fields in variable background media} Inverse Problems \textbf{23}(2007), 2197--2205.
\end{thebibliography}
\end{document}